\documentclass[a4, reqno, 11pt]{amsart}
\usepackage[top=30truemm,bottom=30truemm,left=20truemm,right=20truemm]{geometry}
\usepackage{amsmath,amssymb,amsthm,enumerate}
\usepackage{mathrsfs} 
\usepackage[all]{xy}
\usepackage{eucal} 
\usepackage{stmaryrd} 
\SetSymbolFont{stmry}{bold}{U}{stmry}{m}{n} 
\usepackage{mathtools}
\usepackage{leftindex} 
\allowdisplaybreaks[4] 
\usepackage{enumitem}
\usepackage{graphicx} 
\usepackage{tikz}
\usepackage{tikz-cd}
\tikzcdset{scale cd/.style={every label/.append style={scale=#1}, 
cells={nodes={scale=#1}}}} 
\usepackage{quiver}
\usepackage{bm}
\usepackage[noadjust]{cite} 
\usepackage{indentfirst} 
\usepackage[
setpagesize=false,bookmarks=true,bookmarksnumbered=true,
hidelinks,
colorlinks=false,
]{hyperref} 

\makeatletter
\@namedef{subjclassname@2020}{%
    \textup{2020} Mathematics Subject Classification}
\makeatother

\theoremstyle{plain}
\newtheorem{thm}{Theorem}[section]
\AtBeginEnvironment{thm}{\setlist[enumerate]{label={\upshape{}(\arabic*)}}} 

\newtheorem{mainthm}{Theorem}

\AtBeginEnvironment{mainthm}{\setlist[enumerate]{label={\upshape{}(\arabic*)}}}

\newtheorem{maincor}[mainthm]{Corollary}

\AtBeginEnvironment{maincor}{\setlist[enumerate]{label={\upshape{}(\arabic*)}}}

\newtheorem{lem}[thm]{Lemma}
\AtBeginEnvironment{lem}{\setlist[enumerate]{label={\upshape(\arabic*)}}}

\newtheorem{prop}[thm]{Proposition}
\AtBeginEnvironment{prop}{\setlist[enumerate]{label={\upshape(\arabic*)}}}

\newtheorem{cor}[thm]{Corollary}
\AtBeginEnvironment{cor}{\setlist[enumerate]{label={\upshape(\arabic*)}}}

\theoremstyle{definition}

\newtheorem{ex}[thm]{Example}

\newtheorem{defn}[thm]{Definition}

\newtheorem{rem}[thm]{Remark}

\newtheorem{nota}[thm]{Notation}

\newtheorem*{conv}{Convention}

\numberwithin{equation}{section}

\newcommand{\ABSO}{augmented braided simplicial object} 
\newcommand{\ABsSO}{augmented braided semisimplicial object} 
\newcommand{\ABbsbSO}{augmented braided (semi)simplicial object}
\newcommand{\ABcosSO}{augmented braided cosemisimplicial object}
\newcommand{\Drinfeld}{Drinfel'd}
\newcommand{\maclane}{Mac Lane}

\newcommand{\ktag}[2]{\href{https://kerodon.net/tag/#2}{#1 #2}}

\newcommand{\papervar}[8]{\bibitem{#1}#2, \textit{#3}, #4\ \textbf{#5} (#6), no.~#7, #8.}
\newcommand{\paper}[9]{\papervar{#1}{#2}{#3}{#4}{#5}{#6}{#7}{#8--#9}}
\newcommand{\paperv}[8]{\bibitem{#1}#2, \textit{#3}, #4\ \textbf{#5} (#6), #7--#8.} 
\newcommand{\papervv}[7]{\bibitem{#1}#2, \textit{#3}, #4\ \textbf{#5} (#6), #7.}
\newcommand{\book}[6]{\bibitem{#1}#2, #3, #4, #5, #6.}
\newcommand{\bookv}[7]{\bibitem{#1}#2, #3, #4 edition, #5, #6.}

\NewDocumentCommand\term{o m}{\textit{#2}}

\newcommand{\eps}{\varepsilon}
\newcommand{\myspace}{\hspace{0.05em}}
\newcommand{\minusmyspace}{\hspace{-0.05em}}
\newcommand{\longxrightarrow}[1]{\overset{#1}{\longrightarrow}}
\newcommand{\family}[2]{\{#1\}_{#2}}
\newcommand{\famnat}[1]{\family{#1}{n \ge 0}}
\newcommand{\wt}[1]{\widetilde{#1}}
\newcommand{\cpxdot}{\bullet}
\newcommand{\incl}{\iota}

\NewDocumentCommand\func{o m m o o}{\IfNoValueF{#1}{#1 \colon} {#2} \to {#3}\IfNoValueF{#4}{; #4 \mapsto }\IfNoValueF{#5}{#5}} 
\renewcommand{\subset}{\subseteq}

\newcommand{\cat}[1]{\mathcal{#1}}
\NewDocumentCommand\op{o m}{#2^{\IfNoValueF{#1}{\hspace{#1em}} \mathrm{op}}}
\renewcommand{\hom}[3]{\mathrm{Hom}_{{\myspace} #1} (#2, #3)}

\NewDocumentCommand\fct{o m m o o}{\func[#1]{\cat{#2}}{\cat{#3}}[#4][#5]}
\newcommand{\id}[1]{\mathrm{id}_{#1}}
\newcommand{\iso}{\cong}
\newcommand{\Aut}[2]{\mathrm{Aut}_{{\myspace} #1} (#2)}
\newcommand{\Fun}[2]{[\cat{#1}, \cat{#2}]}
\newcommand{\EndCat}[1]{\Fun{#1}{#1}}
\NewDocumentCommand\opvar{o m}{#2_{\IfNoValueF{#1}{\hspace{#1em}} \mathrm{op}}}
\newcommand{\opcirc}{\mathbin{\opvar{\circ}}}
\newcommand{\opC}{\op[0.07]{\cat{C}}}
\NewDocumentCommand\colim{o m}{\varinjlim_{{\myspace} #1} {\hspace{-0.1em} #2}} 
\NewDocumentCommand\dcolim{o m}{\displaystyle\varinjlim_{#1} {\hspace{-0.1em} #2}} 

\newcommand{\ot}{\varotimes}
\newcommand{\ordot}{\mathord{\ot}}
\newcommand{\asscvar}{a} 
\newcommand{\assc}[3]{a_{#1, #2, #3}}
\newcommand{\asscv}[3]{a'_{#1, #2, #3}}
\newcommand{\unit}{I} 
\newcommand{\lucvar}{l} 
\newcommand{\luc}[1]{l_{#1}}
\newcommand{\lucv}[1]{l'_{#1}}
\newcommand{\rucvar}{r} 
\newcommand{\ruc}[1]{r_{#1}}
\newcommand{\rucv}[1]{r'_{#1}}
\newcommand{\asscvinv}[3]{a_{#1, #2, #3}^{\prime {\myspace} -1}}
\newcommand{\rucvinv}[1]{r_{#1}^{\prime {\myspace} -1}}
\newcommand{\moncat}[1]{(\cat{#1}, \ordot, \unit)} 
\newcommand{\monfctuni}[1]{\varphi_0} 
\newcommand{\monfctvuni}[1]{\monfctuni{#1}'} 
\newcommand{\monfctmulvar}[1]{\varphi_2} 
\newcommand{\monfctmul}[3]{(\monfctmulvar{#1})_{#2, #3}} 
\newcommand{\monfctvmulvar}[1]{\monfctmulvar{#1}'}
\newcommand{\monfctvmul}[3]{(\monfctvmulvar{#1})_{#2, #3}}
\newcommand{\monfct}[1]{(#1, \monfctmulvar{#1}, \monfctuni{#1})} 
\newcommand{\monfctvar}[3]{\fct[\monfct{#1}]{#2}{#3}}

\newcommand{\monfctv}[1]{(#1, \monfctvmulvar{#1}, \monfctvuni{#1})}
\newcommand{\br}{c}
\newcommand{\braid}[2]{{\br}_{#1, #2}}
\newcommand{\brmoncat}[1]{(\cat{#1}, \br)}
\newcommand{\braidv}[2]{{\br'}_{#1, #2}}
\newcommand{\brmoncatv}[1]{(\cat{#1}, \br')}
\newcommand{\brmonfct}[3]{\func[\monfct{#1}]{\brmoncat{#2}}{\brmoncatv{#3}}}

\newcommand{\bicat}[1]{\mathsf{#1}}
\newcommand{\deloop}[1]{\bicat{B} \hspace{0.1em} #1}
\newcommand{\opIvar}{\deloop{\hspace{-0.1em}(\opI)}}
\newcommand{\biCAT}{\bicat{CAT}}

\NewDocumentCommand\brgrp{o}{B_{\IfValueTF{#1}{#1}{n + 1}}}
\newcommand{\brperm}[2]{g_{#1 \adjustsp}^{#2 \vadjustsp}}
\newcommand{\Symmgrp}{S}
\NewDocumentCommand\symmgrp{o}{{\Symmgrp}_{\IfValueTF{#1}{#1}{n + 1}}}
\newcommand{\simpcat}{\bm{\Delta}}
\newcommand{\csg}[1]{\simpcat \mathbf{#1}}
\newcommand{\augcsg}[1]{(\csg{#1})_+}
\newcommand{\brcat}{\csg{B}}
\newcommand{\augbr}{\augcsg{B}}
\newcommand{\symmcat}{\csg{\Symmgrp}}
\newcommand{\augsymm}{\augcsg{\Symmgrp}}
\newcommand{\augbrsemi}{(\simpcat_{\hspace{0.03em} {\textnormal{inj}}} \mathbf{B})_+} 
\newcommand{\augsymmsemi}{(\simpcat_{\hspace{0.03em} {\textnormal{inj}}} \mathbf{\Symmgrp})_+}
\newcommand{\symmsemi}{\simpcat_{\hspace{0.03em} {\textnormal{inj}}} \mathbf{\Symmgrp}}

\newcommand{\coalg}[1]{(#1, \Delta, \eps)}
\newcommand{\coalgv}[1]{(#1, \Delta', \eps')}

\newcommand{\End}[2]{{\mathrm{End}}_{{\myspace} #1} (#2)}
\NewDocumentCommand\linhom{m m m}{{\underline{\mathrm{Hom}}}_{{\myspace} \cat{#1}} (#2, #3)}
\NewDocumentCommand\K{o m o}{\mathrm{K}^{\Ifcpxdot{#1}}(#2, \IfValueTF{#3}{#3}{M})}
\NewDocumentCommand\KC{o}{\K[#1]{C, \br}}
\NewDocumentCommand\KD{o}{\K[#1]{D, \br'}[F(M)]}
\NewDocumentCommand\Kcpx{o}{K^{\Ifcpxdot{#1} + 1}}
\NewDocumentCommand\Lcpx{o}{L^{\Ifcpxdot{#1} + 1}}
\NewDocumentCommand\fcpx{o}{f^{\hspace{0.03em} \Ifcpxdot{#1} + 1}}
\newcommand{\fn}{f^{\hspace{0.03em} n}}
\newcommand{\cosf}{\cosobj[0.03]{f}}
\newcommand{\Ifcpxdot}[1]{\IfValueTF{#1}{#1}{\cpxdot}}
\NewDocumentCommand\KBvar{o m}{\mathrm{KB}^{\Ifcpxdot{#1}}(#2)}
\NewDocumentCommand\KB{o m o}{\KBvar[#1]{#2, \IfValueTF{#3}{#3}{M}}}
\NewDocumentCommand\KBA{o}{\KB[#1]{A, R}}
\NewDocumentCommand\KBC{o}{\KB[#1]{C, \br}}
\NewDocumentCommand\KBD{o}{\KB[#1]{D, \br'}[F(M)]}
\NewDocumentCommand\KSvar{o m}{\mathrm{K{\Symmgrp}}^{\Ifcpxdot{#1}}(#2)}
\NewDocumentCommand\KS{o m o}{\KSvar[#1]{#2, \IfValueTF{#3}{#3}{M}}}
\NewDocumentCommand\KSC{o}{\KS[#1]{C, \br}}
\NewDocumentCommand\HB{o m o}{\mathrm{HB}^{\Ifcpxdot{#1}}(#2, \IfValueTF{#3}{#3}{M})}
\NewDocumentCommand\HBC{o}{\HB[#1]{C, \br}}
\NewDocumentCommand\HBD{o}{\HB[#1]{D, \br'}}
\NewDocumentCommand\HS{o m o}{\mathrm{H{\Symmgrp}}^{\Ifcpxdot{#1}}(#2, \IfValueTF{#3}{#3}{M})}
\NewDocumentCommand\HSC{o}{\HS[#1]{C, \br}}

\NewDocumentCommand\sgn{o}{\IfValueTF{#1}{\mathrm{sgn}(#1)}{\mathrm{sgn}}}
\newcommand{\Xcal}{\mathcal{X}}
\newcommand{\Ycal}{\mathcal{Y}}
\newcommand{\sobjcalinj}[1]{\sobj{\mathcal{#1}'}} 

\newcommand{\opI}{\op[0.05]{\cat{I}}}
\newcommand{\Seta}{S_\eta}
\newcommand{\Teta}[2]{\Tetavar(#1, #2)}
\newcommand{\Tetavar}{T_\eta}
\newcommand{\Ieta}{\cat{I}'_\eta}
\newcommand{\braidtrivar}{{\br}^R}
\newcommand{\braidtri}[2]{{\br}^R_{#1, #2}}
\newcommand{\braidtrivarv}{{\br}^{R'}}
\newcommand{\braidtrivarA}{{\br}^{R_0}}
\newcommand{\braidtrivarB}{{\br}^{R_\lambda}}

\newcommand{\braidtriv}[2]{{\br}^{R'}_{#1, #2}}
\NewDocumentCommand\ff{o}{U_{\IfNoValueF{#1}{#1}}}
\newcommand{\bialg}[1]{\coalg{#1}}

\newcommand{\qtribialg}[1]{(#1, \Delta, \eps, R)}
\newcommand{\qtribialgv}[1]{(#1, \Delta', \eps', R')}
\newcommand{\qtb}[1]{(#1, R)}
\newcommand{\qtbv}[1]{(#1, R')}

\newcommand{\Mod}[1]{{\leftindex_{#1} {\mathcal{M}}}}
\newcommand{\AMod}{\Mod{A}}
\newcommand{\BMod}{\Mod{B}}
\newcommand{\vMod}[1]{\hspace{-0.2em} \Mod{#1}} 
\newcommand{\modvar}[1]{\Mod{#1}^{{\myspace} \textnormal{fin}}}
 
\newcommand{\Amod}{\modvar{A}}
\newcommand{\Bmod}{\modvar{B}}
\newcommand{\sw}[2]{#1^{(#2)}} 
\newcommand{\swvar}[1]{R_{#1}} 
\newcommand{\swvarL}[1]{(R_\lambda)_{#1}}

\newcommand{\gene}[1]{\langle #1 \rangle}
\newcommand{\simpdot}{\bullet}
\NewDocumentCommand\Ifsimpdot{o}{\IfValueTF{#1}{#1}{\simpdot}}
\NewDocumentCommand\sobj{m}{#1_{\simpdot}}
\NewDocumentCommand\cosobj{o m}{#2^{\IfNoValueF{#1}{\hspace{#1em}} \simpdot}}
\NewDocumentCommand\adjustsp{o}{\vphantom{i j + 1 #1}} 
\NewDocumentCommand\vadjustsp{o}{\vphantom{n #1}}
\newcommand{\coface}[2]{\delta_{#1 \adjustsp[m]}^{{\myspace} #2 \vadjustsp[m]}}
\newcommand{\cofacevar}{\coface{0}{0}}
\NewDocumentCommand\codiffvar{m o}{\coface{#1}{\vphantom{\prime} \Ifcpxdot{#2}}}
\NewDocumentCommand\codiff{o}{\codiffvar{}[#1]}
\NewDocumentCommand\codiffv{o}{\coface{}{\prime {\myspace} \Ifcpxdot{#1}}}
\newcommand{\face}[2]{d^{\hspace{0.1em} #1 \adjustsp}_{#2 \vadjustsp}}
\newcommand{\facei}[2]{d^{\hspace{0.07em} #1 \adjustsp}_{#2 \vadjustsp}}

\newcommand{\codege}[2]{\sigma_{#1 \adjustsp[m]}^{#2 \vadjustsp[m]}}
\newcommand{\codegevar}{\codege{0}{0}}
\newcommand{\dege}[2]{s^{\hspace{0.1em} #1 \adjustsp}_{#2 \vadjustsp}}

\newcommand{\coperm}[2]{\tau_{#1 \adjustsp}^{#2 \vadjustsp}}
\newcommand{\copermvar}{\coperm{1}{1}}
\newcommand{\perm}[2]{t^{{\myspace} #1 \adjustsp}_{#2 \vadjustsp}}
\newcommand{\permj}[2]{t^{\hspace{0.1em} #1 \adjustsp}_{#2 \vadjustsp}}
\NewDocumentCommand\funcgene{o m m}{\func[#1]{\gene{#2}}{\gene{#3}}}

\newcommand{\etavar}{\eta^X_{[X', Y]}}
\NewDocumentCommand\evvar{m o}{\epsilon^{#1}_{\IfNoValueF{#2}{#2}}}
\NewDocumentCommand\ev{o m}{\mathrm{ev}_{\IfNoValueF{#1}{\hspace{#1em}} #2}}

\newcommand{\evunit}{\ev{\unit}}

\newcommand{\WT}{\wt{\mathrm{T}}}
\NewDocumentCommand\wT{o}{{\WT}_{\Ifsimpdot[#1]}}
\NewDocumentCommand\wTC{o}{\wT[#1]\brobj{C}}

\newcommand{\WTCAL}{\wt{\mathcal{T}}}
\NewDocumentCommand\wTcal{o}{{\WTCAL}_{\Ifsimpdot[#1]}}
\NewDocumentCommand\wTcalC{o}{\wTcal[#1]\brobj{C}}
\NewDocumentCommand\wTcalD{o}{\wTcal[#1]\brobjv{D}}
\NewDocumentCommand\wTcalA{o}{\wTcal[#1](A, \braidtri{A}{A})}

\newcommand{\sobjcal}[1]{\sobj{\mathcal{#1}}}

\NewDocumentCommand{\vareta}{o}{\eta_{\IfValueTF{#1}{#1 \adjustsp}{\simpdot}}^{\vphantom{\prime \theta}}} 
\NewDocumentCommand{\varetaVV}{o}{\eta_{\IfValueTF{#1}{#1 \adjustsp}{\simpdot}}^{\prime \vphantom{\theta}}} 
\newcommand{\varetazero}{\vareta[{\myspace} 0]}
\newcommand{\varetaVVzero}{\varetaVV[{\myspace} 0]}

\newcommand{\brobj}[1]{(#1, \br)}
\newcommand{\brobjv}[1]{(#1, \br')}
\newcommand{\brcoalg}[1]{(#1, \Delta, \eps, \br)}
\newcommand{\brcoalgv}[1]{(#1, \Delta', \eps', \br')}

\NewDocumentCommand\otfold{o m}{\IfValueTF{#1}{#1}{C}^{\ot {\myspace} #2 \vphantom{i j m n}}}
\NewDocumentCommand\otfoldv{o m}{\IfValueTF{#1}{#1}{C}^{\ot \hspace{0.03em} (#2) \vphantom{i j m n - + 1 2}}}

\title[Braided cohomology of quasi-triangular bialgebras and braided Morita invariance]{Braided cohomology of quasi-triangular bialgebras\\ and braided Morita invariance}
\author{Shota Inoue}
\address{Department of Mathematics, 
Graduate School of Science, 
Tokyo University of Science, 
1-3 Kagurazaka, Shinjuku-ku, Tokyo, 162-8601, JAPAN}
\email{1125701@ed.tus.ac.jp}

\author{Ayako Itaba}
\address{Katsushika Division, 
Institute of Arts and Sciences, 
Tokyo University of Science, 
6-3-1 Niijuku, Katsushika-ku, Tokyo, 125-8585, JAPAN}
\email{itaba@rs.tus.ac.jp}
\date{\today}
\begin{document}
\begin{abstract}
    We introduce the braided cochain complex and the braided cohomology of braided coalgebras in linear monoidal categories, and compare the braided cohomology of braided coalgebras living in different linear monoidal categories using relative morphisms.
    The symmetric cohomology was introduced for groups by Staic, and was generalized to cocommutative Hopf algebras by Shiba, Sanada, and the second author.
    This cohomology involves degreewise actions of the symmetric groups on a cochain complex, which come from the usual symmetric monoidal structure on the category of modules.
    We generalize this framework by dealing with arbitrary linear monoidal categories, and by replacing symmetries with braidings defined merely on an object.
    We first give a convenient description of relative morphisms, and apply this result to prove that the braided cochain complex of quasi-triangular bialgebras is a braided Morita invariant under a certain condition, which is automatically satisfied in the finite-dimensional case. 
\end{abstract}

\subjclass[2020]{18G90, 16D90, 16T10, 18C40, 18M15, 18N50.}
\keywords{Braided cohomology, braided coalgebras, quasi-triangular bialgebras, braided Morita equivalence, augmented braided category, relative morphisms.}
\maketitle
\tableofcontents

\section{Introduction}

Staic \cite{staic-A} defined the notion of the symmetric cohomology of groups, motivated from lattice topological field theory.
Staic \cite{staic-B} showed that the second symmetric cohomology of a group classifies equivalence classes of extensions which possess a suitable (set-theoretic) section.
Pirashvili \cite{pirashvili-B} showed that the third symmetric cohomology of a group classifies equivalence classes of crossed extensions subject to similar conditions, when the group has no elements of order two.
These results have familiar counterparts for the classical cohomology of groups.
Pirashvili \cite{pirashvili-A} related the symmetric cohomology of groups with the exterior cohomology of groups, introduced by Zarelua \cite{zarelua}, by establishing a section from the latter to the former, which is shown to be an isomorphism for degrees less than or equal to four.
Pirashvili \cite{pirashvili-A} also constructed a spectral sequence such that the exterior cohomology groups appear on the second page, and it gives a five-term exact sequence which relates the exterior cohomology 
with the classical cohomology of groups.
We refer the reader to Bardakov--Neshchadim--Singh \cite{bardakov-neshchadim-singh}, Pirashvili--Pirashvili \cite{pirashvili-pirashvili}, and Todea \cite{todea} for related work.
One common way to obtain a reasonable generalization of concepts in the theory of groups (and Lie algebras) is to search for their analogs in the context of cocommutative Hopf algebras, by dealing with group algebras (and universal enveloping algebras).
Shiba--Sanada--Itaba \cite{shiba-sanada-itaba} generalized the definition of the symmetric cohomology to this setting.
They associated to cocommutative Hopf algebras some cochain complexes equipped with degreewise actions of the symmetric groups, and showed that they are isomorphic in the equivariant sense.
By taking the submodules of invariants with respect to the degreewise actions, they defined the subcomplexes called the non-homogeneous and homogeneous cochain complexes, where the symmetric cohomology appears as the cohomology of these chain complexes.
This generalizes the constructions described as in \cite{staic-A} and \cite{pirashvili-A}.
In the same work, the authors also defined the notion of the symmetric Hochschild cohomology of cocommutative Hopf algebras, 
and they constructed an explicit isomorphism between the symmetric cohomology and the symmetric Hochschild cohomology of cocommutative Hopf algebras, which can be thought of as a symmetric version of the Eckmann-Shapiro type isomorphism between the Hopf algebra cohomology and the Hochschild cohomology of Hopf algebras (see Pevtsova--Witherspoon \cite[Lemma 12]{pevtsova-witherspoon} for a precise statement).
Note that the homogeneous cochain complex can be defined in a slightly more general setting of cocommutative bialgebras.

The category of (co)modules over a bialgebra has a natural structure of a linear monoidal category, and parallel to the classical Morita theory for algebras, bialgebras are often studied and are classified up to monoidal Morita(-Takeuchi) equivalence; that is, up to linear monoidal equivalence of their (co)module categories.
Equivalences between module categories are classified by bialgebra twists, 
and those between comodule categories by 
bi-Galois objects.
Ng--Schauenburg \cite{ng-schauenburg} showed that 
the (higher) Frobenius-Schur indicators of 
complex semisimple quasi-Hopf algebras are stable under bialgebra twists.
Bichon \cite{bichon} showed that the 
cohomological dimension of Hopf algebras over an algebraically closed field is a monoidally Morita-Takeuchi invariant, in the sense that it is preserved by equivalences between the comodule categories, under various homological 
assumptions on Hopf algebras. 
Zhu \cite{zhu} showed that the injective dimension of Artin-Schelter Gorenstein Hopf algebras of type $\mathbf{FP}_\infty$ over a field is a monoidally Morita-Takeuchi invariant, and that the global dimension of Hopf algebras is not, by giving a counterexample of (noetherian affine PI) Hopf algebras.
Kaoutit--Kowalzig \cite{kaoutit-kowalzig} showed that the Hopf-cyclic (co)homology of left Hopf algebroids is stable under Morita base changes.
Farinati \cite{farinati} showed that several cohomology theories of (positively graded) differential graded (or dg, for short) coalgebras over a field are stable under equivalences between the derived categories of dg comodules satisfying mild hypotheses; for example, Hochschild (co)homology of (concentrated) coalgebras is stable under derived equivalences induced from a cotilting bicomodule.

Braidings on the category of (co)modules over a bialgebra correspond to (universal) R-matrices (and to R-forms), and a bialgebra equipped with an R-matrix (or an R-form) is called a (co)quasi-triangular bialgebra.
Quasi-triangular bialgebras are often viewed as generalizations of cocommutative bialgebras, and it seems appropriate to study them up to braided Morita equivalence; that is, up to linear braided monoidal equivalence of their module categories.
Shimizu \cite{shimizu} proved the braided Morita invariance of some representations of the braid groups (of Coxeter type A) associated to quasi-triangular Hopf algebras over a field.
For semisimple Hopf algebras over an algebraically closed field of characteristic zero, these monoidal Morita invariants are related to the Reshetikhin-Turaev invariants of closed 
$3$-manifolds.
This result of \cite{shimizu} was 
extended by M\"uller--Walton \cite{muller-walton}, where they established an invariance result for quasi-triangular coideal subalgebras of a finite-dimensional quasi-triangular Hopf algebra, using representations of the braid groups of Coxeter types BC or D instead.

The construction of the (non-)homogeneous cochain complex, mentioned as above, involves degreewise actions of the symmetric groups.
In the homogeneous case, these degreewise actions are induced from the usual symmetry on the monoidal category of modules, which consist of flip morphisms.
It is natural to extend this notion to the setting of quasi-triangular bialgebras, by replacing symmetric groups with braid groups, and the evident symmetry with arbitrary braidings.
Moreover, since the aforementioned actions of the symmetric groups make use of only the component of the symmetry at the 
regular module, a slightly more appropriate candidate for this cohomology theory is the notion of a braided bialgebra (in the sense of Takeuchi \cite{takeuchi}).
We take a step further here, and define the braided cohomology which takes as input braided coalgebras, introduced by Ardizzoni--Menini \cite{ardizzoni-menini}, in linear monoidal categories.
More precisely, the braided cohomology is defined to be the cohomology of a cochain complex, called the braided cochain complex of a braided coalgebra, generalizing the notion of the homogeneous cochain complex in the previous setting.
Working in this generality allows us to consider the braided cohomology of (not necessarily cocommutative) quasi-triangular bialgebras, for example, the 
Sweedler Hopf algebra, the 
Kac-Paljutkin Hopf algebra, and the {\Drinfeld} doubles of finite-dimensional Hopf algebras over a field.
Meanwhile, this framework enables us to study braided coalgebras in the (differential) graded setting, or those in the monoidal category of Yetter-{\Drinfeld} modules over a Hopf algebra with 
bijective antipode.
We also give various constructions and results which are applicable in non-additive situations, for example, when one works in the monoidal category of simplicial sets, or that of species, or when one deals with the (symmetric) monoidal category of symmetric, orthogonal, or Elmendorf-Kriz-Mandell-May spectra, with a view toward applications to combinatorics or stable homotopy theory.

Our constructions of the braided cochain complex and of the braided cohomology involve (monoidal) functors from the augmented braided category $\augbr$ and its variant $\augbrsemi$.
The augmented braided category is an example of (the augmented version of) crossed simplicial groups, introduced independently by Krasauskas \cite{krasauskas} and Fiedorowicz--Loday \cite{fiedorowicz-loday}.
This notion includes the cyclic 
and symmetric category, which are related to the homology theories called the cyclic 
and symmetric homology, and also to their homotopical analogs called the topological Hochschild 
and symmetric homology.
For example, Connes \cite{connes} introduced the cyclic category to have a homological interpretation of the cyclic (co)homology, which was also used in Nikolaus--Scholze \cite{nikolaus-scholze} to understand the topological cyclic homology in terms of the cyclotomic spectrum structure on the topological Hochschild homology.
We refer to \cite{fiedorowicz-loday} for a unified treatment of (co)homology theories associated to crossed simplicial groups, and to Angelini-Knoll--Merling--P\'eroux \cite{angelini-knoll-merling-peroux} for their $\infty$-categorical refinement called the 
topological homology. 
Our work reveals a connection between the (augmented) braided category and the braided cohomology of braided coalgebras, providing an answer to Problem 22 by Singh in Bardakov--Gongopadhyay--Singh--Vesnin--Wu \cite{bardakov-gongopadhyay-singh-vesnin-wu}.
We expect that the braided cohomology admits an $\infty$-categorical generalization.
We note that the topological braid homology was introduced in Angelini-Knoll--Chan--Gerhardt--Merling--P\'eroux \cite{angelini-knoll-chan-gerhardt-merling-peroux}, where they identified (the nerve of) the augmented braided category with the $\mathbb{E}_2$-monoidal envelope of the associative operad.

Let $\cat{C}$ be a $\Bbbk$-linear monoidal category, $\brobj{C}$ a braided coalgebra in $\cat{C}$ (see Definition~\ref{defn:brcoalg}), and $M$ an object in $\cat{C}$.
We associate to $\brobj{C}$ a strict monoidal functor $\func[\wTcalC]{\op{\augbr}}{\EndCat{C}}$, where $\EndCat{C}$ is the (large) strict monoidal category of endofunctors of $\cat{C}$.
This functor plays a crucial role in the construction of the braided cochain complex $\KBC$ 
of $\brobj{C}$ with coefficients in $M$, and also in the study of relative morphisms (see Definition~\ref{defn:rel-mor}).
This s a relative version of monoidal natural transformations between strict monoidal functors of the form $\func{\opI}{\EndCat{C}}$, where $\cat{I}$ denotes either $\augbr$ or $\augbrsemi$, and $\cat{C}$ varies over any monoidal categories.
Our first main result describes relative morphisms between strict monoidal functors of the form $\wTcalC$, 
in terms 
of the structural morphisms of braided coalgebras.

\begin{mainthm}[Theorem~\ref{thm:mor-4}]
    Let $\cat{C}$ and $\cat{D}$ be monoidal categories, $\brcoalg{C}$ and $\brcoalgv{D}$ braided coalgebras in $\cat{C}$ and $\cat{D}$, respectively, and $\monfctvar{F}{C}{D}$ a monoidal functor.
    Then, the assignment
    \begin{equation*}
		\vareta \mapsto \rucv{F(C)} \circ (F(C) \ot \monfctuni{F}^{-1}) \circ \monfctmul{F}{C}{\unit}^{-1} \circ (\vareta[1])_{\unit} \circ (D \ot \monfctuni{F}) \circ \rucvinv{D}
	\end{equation*}
	induces a bijective correspondence between
    \begin{itemize}
        \item collections $\vareta = \famnat{\func[\vareta[n]]{\wTcalD[n] \circ F}{F \circ \wTcalC[n]}}$ of natural transformations such that the pair $(F, \vareta)$ is a relative morphism from $\wTcalC$ to $\wTcalD$, and $\vareta[1] = \monfctmul{F}{C}{-} \circ (\alpha' \ot F(-))$ for some morphism $\func[\alpha']{D}{F(C)}$ in $\cat{D}$, and
        \item morphisms $\func[\alpha]{D}{F(C)}$ in $\cat{D}$ which satisfies the following equalities\emph{:}
        \begin{align*}
            \monfctuni{F} \circ \eps' = F(\eps) \circ \alpha; \qquad 
            \monfctmul{F}{C}{C} \circ (\alpha \ot \alpha) \circ \Delta' &= F(\Delta) \circ \alpha; \\
            \monfctmul{F}{C}{C} \circ (\alpha \ot \alpha) \circ \br' &= F(\br) \circ \monfctmul{F}{C}{C} \circ (\alpha \ot \alpha).
        \end{align*}
    \end{itemize}
\end{mainthm}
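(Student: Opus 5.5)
The plan is to reduce everything to the generators of $\augbr$ and to the degree-one component. We use the description of $\wTcalC$ on objects, $\wTcalC[n] = C^{\ot n} \ot (-)$ up to the evident identification, with the coface and codegeneracy maps sent to natural transformations built from $\Delta$ and $\eps$, and the braid generators sent to $\br$ acting on adjacent tensor factors. Because the functors are strict monoidal, a relative morphism $(F,\vareta)$ is a family of natural transformations $\vareta[n]$ subject to two conditions: a monoidality condition, that $\vareta[m+n]$ is determined by $\vareta[m]$ and $\vareta[n]$ via whiskering, and a naturality condition with respect to every morphism of $\op{\augbr}$. By the monoidality condition, $\vareta[0]$ is forced to be the identity and $\vareta[n]$ is the $n$-fold composite of whiskered copies of $\vareta[1]$. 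Hence $\vareta$ is determined by $\vareta[1]$. Since the monoidal category $\augbr$ is generated by the object $[0]$ together with the morphisms $\coface{0}{0}$ (from $[-1]$), $\codege{0}{0}$ and the braid $\copermvar$ on $[1]$, naturality only needs to be checked on these three generators.

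First I would treat the map in one direction. Given $\vareta$ with $\vareta[1] = \monfctmul{F}{C}{-} \circ (\alpha' \ot F(-))$, evaluating at $\unit$ and composing with the unit constraints as in the displayed formula recovers $\alpha'$; this step uses only the unit coherence of $F$. Conversely, given $\alpha$, I would define $\vareta[1] = \monfctmul{F}{C}{-}\circ(\alpha\ot F(-))$ and extend it monoidally. The two constructions are then mutually inverse by this same unit computation, together with the fact that $\vareta$ is determined by $\vareta[1]$.

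Next I would translate naturality on the generators into the three equations. Naturality with respect to the counit generator compares $\wTcalD$ applied to $\eps'$ and $F$ applied to $\eps$ through $\vareta[1]$ and $\vareta[0]=\id{}$. After using naturality of $\monfctmulvar{F}$ and the unit coherences, this is exactly $\monfctuni{F}\circ\eps' = F(\eps)\circ\alpha$ whiskered by $F(-)$; evaluating at $\unit$ gives the converse. Naturality for the comultiplication generator involves $\vareta[2]$, which unwinds to $\monfctmul{F}{C}{C}\circ(\alpha\ot\alpha)$ composed with $\monfctmulvar{F}$ via associativity coherence. The braid generator is handled in the same way, so that naturality on it becomes the braiding compatibility $\monfctmul{F}{C}{C} \circ (\alpha \ot \alpha) \circ \br' = F(\br) \circ \monfctmul{F}{C}{C} \circ (\alpha \ot \alpha)$. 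In each case the equation at a general object $X$ follows from the equation at $\unit$ by tensoring with $F(X)$ and applying naturality of $\monfctmulvar{F}$, and conversely it specializes to the equation at $\unit$.

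I expect the main obstacle to be the bookkeeping in expressing $\vareta[2]$, and its whiskerings, explicitly through $\alpha$ and the coherence data of $F$ in a non-strict $\cat{C}$ and $\cat{D}$. I would first prove a lemma giving $\vareta[n]_X$ as $\monfctmulvar{F}$-composites with $\alpha^{\ot n}\ot F(X)$, by induction on $n$ using the hexagon-free monoidal coherence of $F$. After that lemma, each generator check becomes a direct diagram chase.
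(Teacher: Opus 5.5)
Your proposal is correct and follows essentially the paper's route. You reduce $\eta$ to $\eta_1$ via (\ref{eq:mor-2}) and (\ref{eq:mor-3}), check (\ref{eq:mor-1}) only on $\cofacevar$, $\codegevar$, $\copermvar$, and turn the resulting equations into conditions on $\alpha$ by evaluating at $\unit$, i.e.\ by faithfulness of $V \mapsto V \ot F(-)$; since the generator checks involve only $\eta_0$, $\eta_1$, $\eta_2$, your general formula for $\eta_n$ is not needed. Three points to tighten: $\eta_0 = \id{F}$ is the separate axiom (\ref{eq:mor-2}), because (\ref{eq:mor-3}) alone only gives $\eta_0 \circ \eta_0 = \eta_0$; ``extend it monoidally'' requires checking that the inductively defined $\eta_n$ satisfy (\ref{eq:mor-3}) for all $m, n$, which the paper does by a double induction; and the reduction to generators needs that the morphisms satisfying (\ref{eq:mor-1}) are closed under tensor products (this uses (\ref{eq:mor-3})) and under inverses of isomorphisms, the latter being what covers the generating morphisms $(\coperm{i}{n})^{-1}$.
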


We apply this result to illustrate the utility of the braided cochain complex and the braided cohomology for studying quasi-triangular $\Bbbk$-bialgebras.
Since every quasi-triangular $\Bbbk$-bialgebra $\qtb{A}$ gives rise to a braided coalgebra $(A, \braidtri{A}{A})$ in the $\Bbbk$-linear monoidal category $\AMod$ of $A$-modules, where $\braidtrivar$ denotes the corresponding braiding on $\AMod$, we can transfer the constructions and properties established for braided coalgebras in general to this setting in a straightforward manner.
In particular, we write $\KBA$ for the braided cochain complex $\KB{A, \braidtri{A}{A}}$ of $(A, 
\braidtri{A}{A})$ with coefficients in an $A$-modules $M$.
Our next main result asserts that, under mild hypotheses, the braided Morita equivalence classes of quasi-triangular $\Bbbk$-bialgebras determine their braided cochain complexes up to isomorphism.

\begin{mainthm}[Theorem~\ref{thm:Morita}]
    Let $\qtb{A}$ and $\qtbv{B}$ be quasi-triangular $\Bbbk$-bialgebras, and suppose that $\func[F]{(\AMod, \braidtrivar)}{(\BMod, \braidtrivarv)}$ is a $\Bbbk$-linear braided monoidal equivalence such that $\ff[B] \circ F \iso \ff[A]$ as $\Bbbk$-linear functors, where $\ff[A]$ and $\ff[B]$ denote the forgetful functors, defined as in \emph{Notation~\ref{nota:alg}~\ref{enum:AMod}}.
    Then, there exists an isomorphism $\KBA \iso \KB{B, R'}[F(M)]$ 
    for all $A$-modules $M$.
    In particular, the braided cohomology of quasi-triangular $\Bbbk$-bialgebras is invariant under braided Morita equivalences which commute with the forgetful functors up to $\Bbbk$-linear natural isomorphism.
\end{mainthm}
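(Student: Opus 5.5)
The plan is to deduce this from Theorem~\ref{thm:mor-4} applied to $\cat{C} = \AMod$, $\cat{D} = \BMod$, the braided coalgebras $(A, \braidtri{A}{A})$ and $(B, \braidtriv{B}{B})$, and the monoidal functor $F$. Concretely, I will produce a morphism $\func[\alpha]{B}{F(A)}$ of $B$-modules satisfying the three equalities of Theorem~\ref{thm:mor-4}, and show that it is an isomorphism. This yields a relative morphism $(F, \vareta)$ from $\wTcalA$ to $\wTcalD$ with invertible components. The braided cochain complex $\KBA$ is built from the functor $\wTcalA$ together with $\mathrm{Hom}$ into $M$. Transporting along $F$ (an equivalence, hence fully faithful) and along the invertible $\vareta$ should therefore give $\KBA \iso \KB{B, R'}[F(M)]$, as complexes, compatibly with the braid group actions. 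I expect this last step to be formal from the construction of $\KBC$ via $\op{\augbr}$, using functoriality of $\KBC$ in relative morphisms, which I assume was established earlier in the paper.

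The key step is constructing $\alpha$. Let $\theta \colon \ff[B] \circ F \iso \ff[A]$. Since $A$ represents $\ff[A]$ via $\mathrm{Hom}_A(A,-) \iso \ff[A]$, and $B$ represents $\ff[B]$, we get
\[
\mathrm{Hom}_B(B, F(-)) \iso \ff[B] F \iso \ff[A] \iso \mathrm{Hom}_A(A, -) \iso \mathrm{Hom}_B(F(A), F(-)).
\]
The last isomorphism uses that $F$ is fully faithful. By the Yoneda lemma (over essential surjectivity of $F$), this gives an isomorphism $\alpha \colon B \to F(A)$ of $B$-modules. Explicitly, $\alpha(b) = b \cdot \theta_A^{-1}(1_A)$.

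Next I check the equalities. For the coalgebra equations, I unwind $\Delta$ and $\eps$ of $A$ as the $A$-linear maps $A \to A \ot A$ and $A \to \Bbbk$ determined by $1 \mapsto 1\ot 1$ and $1 \mapsto 1$. The danger is that $\theta$ need not be monoidal: $\ff[B]F \iso \ff[A]$ is only a $\Bbbk$-linear isomorphism, so $\monfctmul{F}{A}{A}$ need not match $\theta$.

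This is the main obstacle. The first thing I will try is to not use $\theta$ directly, and instead define $\alpha$ through $F$-images of the structure maps. The element $\alpha(1)$ must be chosen so that $\monfctmul{F}{A}{A}(\alpha(1)\ot\alpha(1)) = F(\Delta)(\alpha(1))$ and $\monfctuni{F}(1) = F(\eps)(\alpha(1))$, i.e.\ $\alpha(1)$ is a grouplike-type element of $F(A)$. I will use $\theta$ only to guarantee that $F(A)$ is free of rank one over $B$ with some generator. I will then argue that $F$ transports the comonoid $(A, \Delta, \eps)$, which is the monoidal unit's "regular" comonoid, to a comonoid isomorphic to $B$. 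Both represent the fiber functor: $F(A)$ corepresents $\ff[B]$ through $\theta$, and the comonoid structure is forced by the monoidal structure of the represented functors. A uniqueness argument of this kind should give a comonoid isomorphism, possibly after twisting $\theta$ by an automorphism.

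The braiding equation then follows from naturality of the braidings and the fact that $F$ is braided: $F(\braidtri{A}{A}) \circ \monfctmul{F}{A}{A} = \monfctmul{F}{A}{A} \circ \braidtriv{F(A)}{F(A)}$. Combined with naturality of $\braidtrivarv$ with respect to $\alpha\ot\alpha$, this gives exactly the third equality. Finally, invertibility of $\alpha$ and of $\monfctmulvar{F}$ gives invertibility of every component $\vareta[n]$, which completes the transport of the complexes.
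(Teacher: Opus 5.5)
\textbf{There is a genuine gap, and it sits exactly at the step you flagged as the main obstacle.}

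By routing through Theorem~\ref{thm:mor-4}, you commit to finding an \emph{isomorphism} $\alpha\colon B\to F(A)$ that also satisfies the comultiplication condition (\ref{eq:alpha-2}). Under the hypotheses of the theorem, this is not achievable in general.

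An isomorphism $\alpha$ satisfying (\ref{eq:alpha-1}) and (\ref{eq:alpha-2}) amounts to a \emph{monoidal} natural isomorphism $U_A\cong U_B\circ F$. Given $\alpha$, send $m\in M$ to $F(a\mapsto am)(\alpha(1))$. The equalities $(\varphi_2)_{A,A}(\alpha(1)\otimes\alpha(1))=F(\Delta)(\alpha(1))$ and $F(\varepsilon)(\alpha(1))=\varphi_0(1)$ are exactly what makes this monoidal. The theorem, however, only assumes a $\Bbbk$-linear isomorphism. Your hoped-for ``uniqueness argument'' would therefore have to upgrade a linear isomorphism of fibre functors to a monoidal one, and that is false.

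The paper's own Sweedler example is a counterexample. Take $F=\mathrm{id}\colon(\AMod,R_0)\to(\AMod,R_\lambda)$ with $\varphi_2=J^{-1}\cdot$ and $\lambda\neq0$.
\begin{itemize}
\item Every $A$-linear isomorphism $A\to A$ has the form $a\mapsto av$ for a unit $v$.
\item Condition (\ref{eq:alpha-2}) evaluated at $1$ reads $J^{-1}(v\otimes v)=\Delta(v)$.
\item Reducing modulo the biideal generated by $x$ forces $v\in\{1,g\}+\Bbbk x+\Bbbk gx$.
\item Comparing coefficients then forces $v\in\{1,g\}$, and hence $\lambda=0$.
\end{itemize}
Rescaling $(\varphi_2,\varphi_0)$ by inverse scalars gives the same equation after replacing $v$ by a scalar multiple, so it does not help either. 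No choice of $\alpha$, and no twisting of $\theta$, makes your argument work for this $F$.

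\textbf{The missing idea is that (\ref{eq:alpha-2}) is never needed.}

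The complex $\KBA$ is built from $\wTcal(A,R)\circ\incl$, that is, only from the restriction to $\augbrsemi$: faces and braid generators, with no degeneracies. The transport argument after Definition~\ref{defn:lin-mon-eqv} only requires a relative morphism between these semisimplicial restrictions. For those, Corollary~\ref{cor:mor-6} shows that (\ref{eq:alpha-1}) alone suffices, since (\ref{eq:alpha-3}) is automatic for a braided $F$, as you correctly observed.

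Your Yoneda map $\alpha(b)=b\cdot\theta_A^{-1}(1_A)$ coincides with the paper's $\alpha=\xi_A^{-1}\circ\beta$. The one remaining point is a scalar normalization. Let $\lambda\in\Bbbk^\times$ be the unit by which the linear automorphism $\theta_\Bbbk\circ\varphi_0$ of $\Bbbk$ acts; (\ref{eq:alpha-1}) holds only after correcting by $\lambda$. The paper does this by replacing $(\varphi_2,\varphi_0)$ with $(\lambda\varphi_2,\lambda^{-1}\varphi_0)$, which is equivalent to using $\lambda\alpha$. With that correction, Remark~\ref{rem:rel-isom} gives invertible $\eta_n$, and the rest of your transport argument goes through.
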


Our last main result asserts that the braided cochain complex of a finite-dimensional quasi-triangular bialgebra over a field $\Bbbk$ is a braided Morita invariant.
We deduce this result from Theorem~\ref{thm:Morita} by virtue of a discussion of the interaction between filtered colimits and tensor products (see Lemma~\ref{lem:extend-braided}).
Given a finite-dimensional $\Bbbk$-bialgebra $A$, we let $\Amod$ denote the (monoidal) full subcategory of $\AMod$ consisting of finitely generated $A$-modules (see Notation~\ref{nota:alg}~\ref{enum:Amod}).

\begin{maincor}[Corollary~\ref{cor:Morita}]
    Let $\qtb{A}$ and $\qtbv{B}$ be finite-dimensional quasi-triangular bialgebras over a field $\Bbbk$, and suppose that $\func[F]{(\Amod, \braidtrivar)}{(\Bmod, \braidtrivarv)}$ is a $\Bbbk$-linear braided monoidal equivalence.
    Then, there exists an isomorphism $\KBA \iso \KB{B, R'}[F(M)]$ 
    for all finitely generated $A$-modules $M$.
    In particular, the braided cohomology of finite-dimensional quasi-triangular bialgebras over a field is a braided Morita invariant.
\end{maincor}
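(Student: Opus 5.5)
The plan is to deduce the corollary from Theorem~\ref{thm:Morita}, applied after extending the given equivalence from finitely generated modules to all modules. There are two gaps to close. First, Theorem~\ref{thm:Morita} needs an equivalence $\AMod \to \BMod$, while we are given one only on $\Amod \to \Bmod$. Second, we need the compatibility $\ff[B] \circ F \iso \ff[A]$ with the forgetful functors.

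Since $A$ is finite-dimensional over a field, every $A$-module is the filtered colimit (directed union) of its finitely generated submodules. In other words, $\AMod$ is equivalent to the ind-completion of $\Amod$, and likewise for $B$. Hence $F$ extends, uniquely up to isomorphism, to a $\Bbbk$-linear functor $\tilde F \colon \AMod \to \BMod$ that preserves filtered colimits. Applying the same construction to a quasi-inverse of $F$ shows that $\tilde F$ is an equivalence.

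The monoidal structure $\monfctmulvar{F}$, the unit constraint $\monfctuni{F}$, and compatibility with the braidings $\braidtrivar,\braidtrivarv$ must also extend. I would get this from Lemma~\ref{lem:extend-braided}. The key input is that $\ot_\Bbbk$ commutes with filtered colimits in each variable, so $\tilde F(\colim{}{X_i}\ot \colim{}{Y_j}) \iso \colim{}{F(X_i)\ot F(Y_j)}$. The structure maps are then defined componentwise, and the coherence axioms hold because they hold on the generating finitely generated pieces. The braiding compatibility follows the same way, since the braidings are natural and commute with these colimits.

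For the forgetful-functor condition, I would use $\ff[A] \iso \hom{A}{A}{-}$. Here $A$ is a finitely generated projective generator of $\Amod$, indeed a progenerator, because $A$ is finite-dimensional. A $\Bbbk$-linear equivalence preserves projective generators, so $F(A)$ is a progenerator of $\Bmod$. The natural route is to compare dimensions. Both $\ff[B]\circ F$ and $\ff[A]$ are exact, faithful, $\Bbbk$-linear functors to vector spaces. A monoidal equivalence preserves the unit, so it preserves the dimension function coming from fiber-functor considerations. More concretely, $\ff[B] \circ F$ is a fiber functor on $\Amod$, so it is representable by a projective object $P$ with $\dim_\Bbbk \hom{A}{P}{S} = \dim_\Bbbk F(S)$ for simple modules $S$.

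This identification is the step I expect to be the main obstacle. An arbitrary monoidal equivalence need not commute with the forgetful functors, only up to a twist. What a twist changes, however, is only the monoidal structure of the fiber functor, not the underlying linear functor. So I would try to show that $\ff[B]\circ F$ and $\ff[A]$ are isomorphic as merely linear functors. Two exact faithful linear functors $\Amod\to$ vector spaces, represented by projectives $P$ and $A$, are linearly isomorphic iff $P\iso A$, which holds iff they assign equal dimensions to each simple module. I would argue $\dim F(S)=\dim S$ using the Frobenius–Perron dimension, which is intrinsic to the monoidal structure: the dimension functions $\dim\circ\ff[A]$ and $\dim\circ\ff[B]\circ F$ are both ring homomorphisms from the Grothendieck ring to $\mathbb{Z}$ that are positive on simples, so both equal FPdim. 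If this positivity/uniqueness argument needs more than is available over a general field, I would instead rely on the paper's finite-dimensional hypotheses as packaged in the preceding lemmas.

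With $\ff[B]\circ F\iso\ff[A]$ on $\Amod$, the isomorphism extends to $\AMod$ by filtered colimits, since the forgetful functors preserve them. Theorem~\ref{thm:Morita} then yields $\KBA\iso\KB{B,R'}[\tilde F(M)]$. For finitely generated $M$ we have $\tilde F(M)=F(M)$, which gives the stated isomorphism and hence the invariance of cohomology.
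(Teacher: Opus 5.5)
Your proposal is correct and follows essentially the same route as the paper: extend $F$ via Lemma~\ref{lem:extend-braided}, reduce $\ff[B]\circ F\iso\ff[A]$ to $F^{-1}(B)\iso A$ by representability and projective-cover multiplicities, and conclude $\dim F(S_i)=\dim S_i$ by the Frobenius--Perron theorem. Your hedge is unnecessary. The vectors $(\dim S_j)_j$ and $(\dim F(S_j))_j$ are strictly positive eigenvectors of the same nonnegative matrix of multiplication by $[S_i]$, with eigenvalues $\dim S_i$ and $\dim F(S_i)$. Part (3) of the Frobenius--Perron theorem therefore forces both eigenvalues to equal its spectral radius, and this needs neither transitivity nor an algebraically closed field.
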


Finally, we provide an example by comparing the $\Bbbk$-vector spaces $\KB[1]{A, R_\lambda}$, in the case when $A$ is the 
Sweedler Hopf algebra, whose R-matrices are known to be parameterized by $\lambda \in \Bbbk$.

This paper is organized as follows.
In Section~\ref{sec:2}, we review some concepts from the theory of (braided) monoidal categories and of $\Bbbk$-linear categories.
We also introduce the augmented braided category and its variant, which appear in this paper as the domains of (monoidal) functors which play a crucial role in the construction of the braided cohomology.
In Section~\ref{sec:3}, we construct strict monoidal functors from braided coalgebras in monoidal categories (see Proposition~\ref{prop:nonstr-augbrobj}), and based on this construction, we introduce in Subsection~\ref{subsec:3.2} the braided cochain complex and the braided cohomology of braided coalgebras in $\Bbbk$-linear monoidal categories.
In Section~\ref{sec:4}, we introduce and study the notion of a relative morphism, and we show that relative morphisms in a particular setting are essentially the same as a relative version of morphisms of coalgebras (see Theorem~\ref{thm:mor-4}).
We apply this to prove that the braided cochain complex of quasi-triangular $\Bbbk$-bialgebras is a braided Morita invariant under some hypotheses (see Theorem~\ref{thm:Morita}), which become automatic in the finite-dimensional case (see Corollary~\ref{cor:Morita}).

\begin{conv}
    \begin{itemize}
        \item Throughout this paper, we fix a commutative ring $\Bbbk$.
        \item We refer to locally small $1$-categories simply as categories, and covariant functors as functors.
        \item Given a category $\cat{C}$ and $X \in \cat{C}$, we write $\Aut{\cat{C}}{X}$ for the group of automorphisms of $X$ in $\cat{C}$.
        \item Given a category $\cat{C}$, we let $\opC$ denote the opposite category of $\cat{C}$, which is small if $\cat{C}$ is.
        \item For categories $\cat{C}$ and $\cat{D}$, we let $\Fun{C}{D}$ denote the (large) category of functors from $\cat{C}$ to $\cat{D}$. 
        \item We use the symbol $\star$ for the horizontal composition of natural transformations.
    \end{itemize}
\end{conv}

\section{Preliminaries}
\label{sec:2}

In this section, we recall some basic definitions and preliminary results from the theory of (braided) monoidal categories; we refer to Kassel \cite[Chapters XI and XII]{kassel} for details.
Then, we introduce two (small) strict monoidal categories, the augmented braided category as well as its variant (see Definitions~\ref{defn:augbr} and \ref{defn:augbrsemi}), which play a significant role in this paper.
We also collect elementary facts on evaluations and on $\Bbbk$-linear categories, which will be used in an argument after Definition~\ref{defn:lin-mon-eqv}.

\subsection{(Braided) monoidal categories and (braided) monoidal functors}

We recall that a \term{monoidal category} is a triple $\moncat{C}$ consisting of a category $\cat{C}$, a functor $\func[\ordot]{\cat{C} \times \cat{C}}{\cat{C}}$, an object $\unit$ in $\cat{C}$, equipped with fixed collections $\asscvar = \family{\func[\assc{X}{Y}{Z}]{(X \ot Y) \ot Z}{X \ot (Y \ot Z)}}{X, Y, Z \in \cat{C}}$, $\lucvar = \family{\func[\luc{X}]{\unit \ot X}{X}}{X \in \cat{C}}$, and $\rucvar = \family{\func[\ruc{X}]{X \ot \unit}{X}}{X \in \cat{C}}$ of isomorphisms in $\cat{C}$, subject to the naturality of $\asscvar$, $\lucvar$, and $\rucvar$ and certain coherence conditions between them (see \cite[Definition XI.2.1]{kassel} for a precise definition).
In this case, we often refer to the pair $(\ordot, \unit)$ as a \term{monoidal structure} on $\cat{C}$.
A monoidal category $\moncat{C}$ is said to be \term{strict} if the equality $(X \ot Y) \ot Z = X \ot (Y \ot Z)$ holds for all $X, Y, Z \in \cat{C}$, the equality $\unit \ot X = X = X \ot \unit$ holds for all $X \in \cat{C}$, and the morphisms $\assc{X}{Y}{Z}$, $\luc{X}$, and $\ruc{X}$ are the identities.

We will often abbreviate a monoidal category $\moncat{C}$ to $\cat{C}$.
We will frequently distinguish structural collections of isomorphisms by writing them either as $\asscvar$, $\lucvar$, $\rucvar$, or as $\asscvar'$, $\lucvar'$, $\rucvar'$.

\begin{defn}[{cf.~\cite[Definition XI.4.1 (a)]{kassel}}]
    Let $\cat{C}$ and $\cat{D}$ be monoidal categories.
    A \term{monoidal functor} from $\cat{C}$ to $\cat{D}$ is a triple $\monfct{F}$ consisting of
    \begin{itemize}
        \item a functor $\fct[F]{C}{D}$,
        \item an isomorphism $\func[\monfctuni{F}]{\unit}{F(\unit)}$ in $\cat{D}$, and
        \item an isomorphism $\func[\monfctmul{F}{X}{Y}]{F(X) \ot F(Y)}{F(X \ot Y)}$ in $\cat{D}$ for all $X, Y \in \cat{C}$,
    \end{itemize}
    such that the following conditions are satisfied:
    \begin{enumerate}
        \item \label{enum:monfct-1} For all pairs of morphisms $\func[f]{X}{X'}$ and $\func[g]{Y}{Y'}$ in $\cat{C}$, we have the equality
        \begin{equation}
            \label{eq:monfct-0}
            \monfctmul{F}{X'}{Y'} \circ (F(f) \ot F(g)) = F(f \ot g) \circ \monfctmul{F}{X}{Y}.
        \end{equation}
        \item \label{enum:monfct-2} For all $X, Y, Z \in \cat{C}$, we have the equality
        \begin{equation}
            \label{eq:monfct-1}
            F(\assc{X}{Y}{Z}) \circ \monfctmul{F}{X \ot Y}{Z} \circ (\monfctmul{F}{X}{Y} \ot F(Z)) 
            = \monfctmul{F}{X}{Y \ot Z} \circ (F(X) \ot \monfctmul{F}{Y}{Z}) \circ \asscv{F(X)}{F(Y)}{F(Z)}.
        \end{equation}
        \item \label{enum:monfct-3} For all $X \in \cat{C}$, we have the equalities
        \begin{align}
            \label{eq:monfct-2-l} \lucv{F(X)} &= F(\luc{X}) \circ \monfctmul{F}{\unit}{X} \circ (\monfctuni{F} \ot F(X)); \\
            \label{eq:monfct-2-r} \rucv{F(X)} &= F(\ruc{X}) \circ \monfctmul{F}{X}{\unit} \circ (F(X) \ot \monfctuni{F}).
        \end{align}
    \end{enumerate}
    In this case, we often refer to the pair $(\monfctmulvar{F}, \monfctuni{F})$ as a \term{monoidal structure} on $F$.
    When $\cat{C}$ and $\cat{D}$ are strict, a monoidal functor $\monfct{F}$ from $\cat{C}$ to $\cat{D}$ is said to be \textit{strict} if the equality $F(X) \ot F(Y) = F(X \ot Y)$ holds for all $X, Y \in \cat{C}$, the equality $\unit = F(\unit)$ holds, and the morphisms $\monfctmul{F}{X}{Y}$ and $\monfctuni{F}$ are the identities.
    Given such a functor $\fct[F]{C}{D}$, condition~\ref{enum:monfct-1} holds if and only if the equality $F(f) \ot F(g) = F(f \ot g)$ holds for all pairs of morphisms $\func[f]{X}{X'}$ and $\func[g]{Y}{Y'}$ in $\cat{C}$, whereas conditions~\ref{enum:monfct-2} and \ref{enum:monfct-3} become automatic (see also Johnson--Yau \cite[Proposition 4.1.8]{johnson-yau}).
\end{defn}

We will often abbreviate a monoidal functor $\monfct{F}$ to $F$.
We note that some authors do not require $\monfctmul{F}{X}{Y}$ and $\monfctuni{F}$ of a monoidal functor to be isomorphisms, and they refer to a monoidal functor in our sense as a \textit{strong} monoidal functor (see \cite[page 12]{johnson-yau}).

\begin{defn}[{cf.~\cite[Definition XI.4.1 (b)]{kassel}}]
    \label{defn:mon-nat-trans}
    Let $\fct[\monfct{F}, \monfctv{G}]{C}{D}$ be monoidal functors.
    A \term{monoidal natural transformation} from $\monfct{F}$ to $\monfctv{G}$ is a natural transformation $\func[\eta]{F}{G}$ such that the equalities $\eta_{\unit} \circ \monfctuni{F} = \monfctvuni{G}$ and $\eta_{X \ot Y} \circ \monfctmul{F}{X}{Y} = \monfctvmul{G}{X}{Y} \circ (\eta_X \ot \eta_Y)$ hold for all $X, Y \in \cat{C}$.
\end{defn}

\begin{lem}
    \label{lem:monfct-easy}
    Let $\monfctvar{F}{C}{D}$ be a monoidal functor.
    Then, the assignment $V \mapsto V \ot F(-)$ gives a faithful functor $\func{\cat{D}}{\Fun{C}{D}}$.
\end{lem}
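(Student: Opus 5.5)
We have to show that $V \mapsto V \ot F(-)$ is a functor $\cat{D} \to \Fun{C}{D}$ and that it is faithful. For functoriality, to each $V$ we assign the functor $X \mapsto V \ot F(X)$, $f \mapsto V \ot F(f)$; functoriality in $X$ follows from functoriality of $\ordot$ and of $F$. To a morphism $\func[g]{V}{W}$ we assign the family $g \ot F(-) = \family{g \ot F(X)}{X \in \cat{C}}$. This family is natural because, for every $\func[f]{X}{X'}$, bifunctoriality of $\ordot$ gives
\begin{equation*}
(g \ot F(X')) \circ (V \ot F(f)) = g \ot F(f) = (W \ot F(f)) \circ (g \ot F(X)).
\end{equation*}
Preservation of identities and of composition is again immediate from bifunctoriality of $\ordot$.

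For faithfulness, take $\func[g, h]{V}{W}$ with $g \ot F(-) = h \ot F(-)$. We evaluate at the unit object $\unit$ of $\cat{C}$ to get $g \ot F(\unit) = h \ot F(\unit)$. Precomposing with the isomorphism $V \ot \monfctuni{F}$ and using bifunctoriality gives
\begin{equation*}
(g \ot F(\unit)) \circ (V \ot \monfctuni{F}) = (W \ot \monfctuni{F}) \circ (g \ot \unit),
\end{equation*}
and likewise with $h$ in place of $g$. Since $W \ot \monfctuni{F}$ is an isomorphism, we obtain $g \ot \unit = h \ot \unit$.

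Finally, naturality of the right unit constraint $\rucvar'$ of $\cat{D}$ gives $g \circ \rucv{V} = \rucv{W} \circ (g \ot \unit)$, and the same for $h$. Since $\rucv{V}$ is invertible, this yields $g = h$.

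The only point needing care is that $F(\unit)$ is not literally the unit of $\cat{D}$. This is exactly where we use that $\monfctuni{F}$ is an isomorphism, i.e.\ that $F$ is strong monoidal; the argument would fail for a merely lax monoidal $F$.
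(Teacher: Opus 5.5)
Your proof is correct and is essentially the paper's argument. The paper also evaluates at $\unit$ and recovers $g$ from $g \ot F(-)$ as $\rucv{W} \circ (W \ot \monfctuni{F}^{-1}) \circ (g \ot F(\unit)) \circ (V \ot \monfctuni{F}) \circ \rucvinv{V}$, which is your cancellation of $V \ot \monfctuni{F}$, $W \ot \monfctuni{F}$ and the right unitors written as one formula; it obtains functoriality by composing $F$ with $V \mapsto V \ot (-)$, which matches your direct check.
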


\begin{proof}
    This functor is defined by postcomposing $F$ with the functor $\func{\cat{D}}{\Fun{D}{D}}[V][V \ot (-)]$, and the faithfulness can be verified using the equalities
    \begin{align*}
        \rucv{W} \circ (W \ot \monfctuni{F}^{-1}) \circ (g \ot F(-))_{\unit} \circ (V \ot \monfctuni{F}) \circ \rucvinv{V} 
        &= \rucv{W} \circ (W \ot \monfctuni{F}^{-1}) \circ (g \ot F(\unit)) \circ (V \ot \monfctuni{F}) \circ \rucvinv{V} \\
        &= \rucv{W} \circ (g \ot \unit) \circ \rucvinv{V} 
        = g
    \end{align*}
    for all morphisms $\func[g]{V}{W}$ in $\cat{D}$, where the third equality follows from the naturality of $\rucvar'$.
\end{proof}

\begin{defn}[{cf.~\cite[Definition XIII.1.1]{kassel}}]
    A \term{braided monoidal category} is a pair $\brmoncat{C}$ consisting of
    \begin{itemize}
        \item a monoidal category $\cat{C}$, and
        \item an isomorphism $\func[\braid{X}{Y}]{X \ot Y}{Y \ot X}$ in $\cat{C}$ for all $X, Y \in \cat{C}$,
    \end{itemize}
    such that the following conditions are satisfied:
    \begin{enumerate}
        \item For all pairs of morphisms $\func[f]{X}{X'}$, $\func[g]{Y}{Y'}$ in $\cat{C}$, we have the equality
        \begin{equation}
            \label{eq:brmoncat-1}
            \braid{X'}{Y'} \circ (f \ot g) = (g \ot f) \circ \braid{X}{Y}.
        \end{equation}
        \item For all $X, Y, Z \in \cat{C}$, we have the equalities
        \begin{align}
            (Y \ot \braid{X}{Z}) \circ \assc{Y}{X}{Z} \circ (\braid{X}{Y} \ot Z) &= \assc{Y}{Z}{X} \circ \braid{X}{Y \ot Z} \circ \assc{X}{Y}{Z}; \label{eq:brmoncat-2} \\
            (\braid{X}{Z} \ot Y) \circ \assc{X}{Z}{Y}^{-1} \circ (X \ot \braid{Y}{Z}) &= \assc{Z}{X}{Y}^{-1} \circ \braid{X \ot Y}{Z} \circ \assc{X}{Y}{Z}^{-1}. \label{eq:brmoncat-3}
        \end{align}
    \end{enumerate}
    In this case, we refer to the collection $\br = \family{\braid{X}{Y}}{X, Y \in \cat{C}}$ as a \term{braiding} on $\cat{C}$.
\end{defn}

\begin{defn}[{cf.~\cite[Definition XIII.3.6]{kassel}}]
    Let $\brmoncat{C}$ and $\brmoncatv{D}$ be braided monoidal categories.
    A \term{braided monoidal functor} from $\brmoncat{C}$ to $\brmoncatv{D}$ is a monoidal functor $\monfctvar{F}{C}{D}$ such that the following equality holds for all $X, Y \in \cat{C}$:
    \begin{equation}
        \label{eq:brmonfct}
        \monfctmul{F}{Y}{X} \circ \braidv{F(X)}{F(Y)} = F(\braid{X}{Y}) \circ \monfctmul{F}{X}{Y}.
    \end{equation}
\end{defn}

\subsection{The augmented braided category}

Recall that for $n \ge 0$, the \textit{braid group $\brgrp[n]$ with $n$ strands} was introduced by Artin \cite{artin} as the group of ambient isotopy classes of braids with $n$ strands.
This group is isomorphic to the fundamental group of the configuration space of unordered distinct $n$ points in $\mathbb{R}^2$.
For our purpose, it suffices to recall only the following presentation of the braid groups; for all $n \ge 0$, the group $\brgrp$ is generated by $\{\brperm{i}{n} \mid 1 \le i \le n\}$, subject to the relations $\brperm{i}{n} \brperm{j}{n} = \brperm{j}{n} \brperm{i}{n}$ for all $1 \le i, j \le n$ with $\lvert i - j \rvert \ge 2$ and $\brperm{i}{n} \brperm{i + 1}{n} \brperm{i}{n} = \brperm{i + 1}{n} \brperm{i}{n} \brperm{i + 1}{n}$ for all $1 \le i < n$ (see also \cite[Corollary X.6.6]{kassel}).

\begin{defn}[{cf.~\cite[Definition 4.2.1]{angelini-knoll-chan-gerhardt-merling-peroux}}]
    \label{defn:augbr}
	The \term{augmented braided category} $\augbr$ is the category, with the objects $\gene{n}$ for $n \ge 0$, and morphisms generated by $\funcgene[\coface{j}{n}]{n}{n + 1}$ for $0 \le j \le n$, $\funcgene[\codege{j}{n}]{n + 2}{n + 1}$ for $0 \le j \le n$, and $\funcgene[\coperm{i}{n}, (\coperm{i}{n})^{-1}]{n + 1}{n + 1}$ for $1 \le i \le n$, subject to the following conditions:
	\begin{enumerate}
		\item \label{enum:cosimp-1} For all $0 \le i < j \le n$, we have the equality $\coface{j}{n} \circ \coface{i}{n - 1} = \coface{i}{n} \circ \coface{j - 1}{n - 1}$.
		\item \label{enum:cosimp-2} For all $0 \le i \le j \le n$, we have the equality $\codege{j}{n} \circ \codege{i}{n + 1} = \codege{i}{n} \circ \codege{j + 1}{n + 1}$.
		\item \label{enum:cosimp-3} For all $0 \le i \le n + 1$ and $0 \le j \le n$, we have the following equality:
		\begin{equation*}
            \codege{j}{n} \circ \coface{i}{n + 1} 
			= \begin{cases*}
				\coface{i}{n} \circ \codege{j - 1}{n - 1} & if $i < j$, \\
				\id{\gene{n + 1}} & if $i = j$ or $i = j + 1$, \\
				\coface{i - 1}{n} \circ \codege{j}{n - 1} & if $i > j + 1$.
			\end{cases*}
		\end{equation*}
		\item \label{enum:coaugbr-0} For all $1 \le i \le n$, the morphism $\coperm{i}{n}$ is an isomorphism in $\augbr$, whose inverse is given by $(\coperm{i}{n})^{-1}$.
		\item \label{enum:coaugbr-1} For all $1 \le i < n$, we have the equality $\coperm{i}{n} \circ \coperm{i + 1}{n} \circ \coperm{i}{n} = \coperm{i + 1}{n} \circ \coperm{i}{n} \circ \coperm{i + 1}{n}$.
		\item \label{enum:coaugbr-2} For all $1 \le i, j \le n$ with $\lvert i - j \rvert \ge 2$, we have the equality $\coperm{i}{n} \circ \coperm{j}{n} = \coperm{j}{n} \circ \coperm{i}{n}$.
		\item \label{enum:coaugbr-3} For all $1 \le i \le n$ and $0 \le j \le n$, we have the equality
		\[\coperm{i}{n} \circ \coface{j}{n} 
		= \begin{cases*}
			\coface{j}{n} \circ \coperm{i}{n - 1} & if $i < j$, \\
			\coface{j - 1}{n} & if $i = j$, \\
			\coface{j + 1}{n} & if $i = j + 1$, \\
			\coface{j}{n} \circ \coperm{i - 1}{n - 1} & if $i > j + 1$.
		\end{cases*}\]
		\item \label{enum:coaugbr-4} For all $1 \le i \le n$ and $0 \le j \le n$, we have the equality
		\[\coperm{i}{n} \circ \codege{j}{n} 
		= \begin{cases*}
			\codege{j}{n} \circ \coperm{i}{n + 1} & if $ i < j$, \\
			\codege{j - 1}{n} \circ \coperm{j + 1}{n + 1} \circ \coperm{j}{n + 1} & if $i = j$, \\
			\codege{j + 1}{n} \circ \coperm{j + 1}{n + 1} \circ \coperm{j + 2}{n + 1} & if $i = j + 1$, \\
			\codege{j}{n} \circ \coperm{i + 1}{n + 1} & if $i > j + 1$.
		\end{cases*}\]
	\end{enumerate}
    We refer to morphisms in $\augbr$ of the form $\coface{j}{n}$, $\codege{j}{n}$, $\coperm{i}{n}$, or $(\coperm{i}{n})^{-1}$, where $0 \le j \le n$ and $1 \le i \le n$, as \textit{generating morphisms of $\augbr$}.
    The category $\augbr$ admits a strict monoidal structure $(\ordot, \gene{0})$, where the functor $\ot$ is uniquely determined by the following conditions:
	\begin{enumerate}[resume]
		\item \label{enum:coaugbr-mon-0} For all $m, n \ge 0$, we have the equality $\gene{m} \ot \gene{n} = \gene{m + n}$. 
		\item \label{enum:coaugbr-mon-1} For all $m \ge 0$ and $0 \le j \le n$, we have the equalities $\gene{m} \ot \coface{j}{n} = \coface{m + j}{m + n}$ and $\coface{j}{n} \ot \gene{m} = \coface{j}{m + n}$.
		\item \label{enum:coaugbr-mon-2} For all $m \ge 0$ and $0 \le j \le n$, we have the equalities $\gene{m} \ot \codege{j}{n} = \codege{m + j}{m + n}$ and $\codege{j}{n} \ot \gene{m} = \codege{j}{m + n}$.
		\item \label{enum:coaugbr-mon-3} For all $m \ge 0$ and $1 \le i \le n$, we have the equalities $\gene{m} \ot \coperm{i}{n} = \coperm{m + i}{m + n}$ and $\coperm{i}{n} \ot \gene{m} = \coperm{i}{m + n}$.
    \end{enumerate}
\end{defn}

\begin{defn}
    \label{defn:augbrsemi}
    We let $\augbrsemi$ denote the category, with the same objects as $\augbr$, and morphisms generated by $\funcgene[\coface{j}{n}]{n}{n + 1}$ for $0 \le j \le n$ and $\funcgene[\coperm{i}{n}, (\coperm{i}{n})^{-1}]{n + 1}{n + 1}$ for $1 \le i \le n$, subject to conditions~\ref{enum:cosimp-1} and \ref{enum:coaugbr-0}--\ref{enum:coaugbr-3} of Definition~\ref{defn:augbr}.
    The category $\augbrsemi$ admits a strict monoidal structure $(\ordot, \gene{0})$, where the functor $\ot$ is uniquely determined by conditions~\ref{enum:coaugbr-mon-0}, \ref{enum:coaugbr-mon-1}, and \ref{enum:coaugbr-mon-3} of Definition~\ref{defn:augbr}.
\end{defn}

\begin{rem}
    Using the equalities in \ref{enum:coaugbr-mon-0}--\ref{enum:coaugbr-mon-3}, we can deduce the equality $\gene{n} = \otfold[\gene{1}]{n}$ for all $n \ge 0$, as well as the equalities $\coface{j}{n} = \coface{j}{j} \ot \gene{n - j} = \gene{j} \ot \cofacevar \ot \gene{n - j}$ and $\codege{j}{n} = \codege{j}{j} \ot \gene{n - j} = \gene{j} \ot \codegevar \ot \gene{n - j}$ for all $0 \le j \le n$, and the equality $\coperm{i}{n} = \coperm{i}{i} \ot \gene{n - i} = \gene{i - 1} \ot \copermvar \ot \gene{n - i}$ for all $1 \le i \le n$, and those except the third one hold not only in $\augbr$ but also in the subcategory $\augbrsemi$.
\end{rem}

We will make use of the above equalities implicitly in what follows.

The category $\augbr$ (and its subcategory $\augbrsemi$) are related to the notion of \textit{crossed simplicial groups}, introduced independently by Krasauskas \cite{krasauskas} and Fiedorowicz--Loday \cite{fiedorowicz-loday}.

\begin{rem}
    \label{rem:augsymm}
    \begin{enumerate}
        \item \label{enum:brcat} The full subcategory $\brcat$ of $\augbr$ consisting of objects $\gene{n}$ for all $n > 0$ is the same as the crossed simplicial group arising from the collection $\famnat{\brgrp}$ of the braid groups.
        \item \label{enum:symmcat} Let $\augsymm$ denote the category constructed from $\augbr$ by imposing an extra condition that the equality $(\coperm{i}{n})^2 = \id{\gene{n + 1}}$ hold for all $1 \le i \le n$, and let $\symmcat$ denote the full subcategory of $\augsymm$ defined similarly to \ref{enum:brcat}.
        Then, $\symmcat$ is also the same as the crossed simplicial group arising from the collection $\famnat{\symmgrp}$ of the symmetric groups.
        The category $\augsymm$ inherits a strict monoidal structure from that on $\augbr$ via the evident functor $\func{\augbr}{\augsymm}$, which is described in a more general context in Angelini-Knoll--Merling--P\'eroux \cite[Lemma 2.25]{angelini-knoll-merling-peroux}.
        \item The category $\augbrsemi$ is isomorphic to the \textit{category $\mathfrak{B}$ of braided injections}, which was introduced by Schlichtkrull--Solberg \cite[Definition 2.1]{schlichtkrull-solberg}, whose presentation appeared in \cite[Lemma 3.4]{schlichtkrull-solberg}.
        \item \label{enum:augsymmsemi} Let $\augsymmsemi$ denote the (monoidal) subcategory of $\augsymm$ defined similarly to Definition~\ref{defn:augbrsemi} (which can also be constructed from $\augbrsemi$ by imposing the condition in \ref{enum:brcat}).
        The subcategory $\symmsemi \coloneqq \symmcat \cap \augsymmsemi$ is called the \term{symmetric semisimplicial category} in Banerjee \cite[page 9]{banerjee}.
    \end{enumerate}
\end{rem}

We collect a few properties relating the categories $\augbr$ and $\augbrsemi$ and the braid groups.

\begin{rem}
    \label{rem:augbr}
    \begin{enumerate}
        \item \label{enum:rem-augbr-1} We have an evident strict monoidal functor $\func[\incl]{\augbrsemi}{\augbr}$.
        In fact, $\incl$ is a faithful functor, and we can thus regard $\augbrsemi$ as a monoidal subcategory of $\augbr$, though we will not need this observation in this paper.
        \item \label{enum:rem-augbr-2} For all $n \ge 0$, we have a unique group homomorphism $\func[\rho_{n + 1}]{\brgrp}{\Aut{\augbrsemi}{\gene{n + 1}}}$ which satisfies that $\rho_{n + 1}(\brperm{i}{n}) = \coperm{i}{n}$ for all $1 \le i \le n$.
        In fact, $\rho_{n + 1}$ are isomorphisms of groups, though we will not need this observation in this paper.
        \item Similarly to \ref{enum:rem-augbr-1} and \ref{enum:rem-augbr-2}, we have a strict monoidal functor $\func{\augsymmsemi}{\augsymm}$ as well as group homomorphisms $\func{\symmgrp}{\Aut{\augsymmsemi}{\gene{n + 1}}}$ for all $n \ge 0$.
    \end{enumerate}
\end{rem}

\begin{rem}
    \label{rem:generate-augbr}
    We claim that every morphism in $\augbr$ can be written as a composition of generating morphisms.
    To prove this, let $\cat{I}'$ be a subcategory of $\augbr$ which contains all the generating morphisms.
    It suffices to prove that $\cat{I}' = \augbr$.
    Since conditions~\ref{enum:cosimp-1}--\ref{enum:coaugbr-4} of Definition~\ref{defn:augbr} still hold in the subcategory $\cat{I}'$, we have a (unique) functor $\func{\augbr}{\cat{I}'}$ which sends every object and generating morphism to itself.
    Since the composition $\augbr \to \cat{I'} \hookrightarrow \augbr$ is the identity on objects and on generating morphisms, this composition is equal to the identity functor $\id{\augbr}$.
    Thus, the inclusion functor $\cat{I'} \hookrightarrow \augbr$ is a retraction, and is in particular full.
    This finishes the proof.
\end{rem}

\subsection{Evaluations}

Let $\cat{C}$ and $\cat{D}$ be categories.
The assignments $(F, X) \mapsto F(X)$ and $(\eta, X) \mapsto \eta_X$ give a functor $\func[\mathrm{ev}]{\Fun{C}{D} \times \cat{C}}{\cat{D}}$, and given a specified object $X$ in $\cat{C}$, precomposing $\mathrm{ev}$ with the evident functor $\Fun{C}{D} \to \Fun{C}{D} \times \{X\} \hookrightarrow \Fun{C}{D} \times \cat{C}$ gives a functor $\func{\Fun{C}{D}}{\cat{D}}$ which witnesses the evaluation at $X$.
This procedure can also be formulated in the setting of closed monoidal categories.
Recall that a monoidal category $\cat{C}$ is said to be (\textit{left}) \textit{closed} if the functor $(-) \ot X$ admits a right adjoint (written $[X, -]$) for all $X \in \cat{C}$ (see B\"ohm \cite[Definition 3.23]{bohm}).
Note that a reader can safely skip the proof of Lemma~\ref{lem:ev}, as it becomes an elementary fact when $\cat{C}$ is the (huge) Cartesian closed category of categories.

\begin{nota}
    Let $\cat{C}$ be a closed monoidal category, and $X$ and $Y$ objects in $\cat{C}$.
    \begin{enumerate}
        \item We let $\func[\eta^X]{\id{\cat{C}}}{[X, (-) \ot X]}$ and $\func[\evvar{X}]{[X, -] \ot X}{\id{\cat{C}}}$ denote the unit and the counit of the adjunction $(-) \ot X \dashv [X, -]$, respectively.
        \item Given a morphism $\func[x]{\unit}{X}$ in $\cat{C}$, we let $\ev{x}$ denote the composite
        \[[X, Y] \xrightarrow{\ruc{[X, Y]}^{-1}} [X, Y] \ot \unit \xrightarrow{[X, Y] \ot x} [X, Y] \ot X \xrightarrow{\evvar{X}[Y]} Y.\]
        \item Given a morphism $\func[f]{X}{X'}$, we let $[f, Y]$ denote the composite
        \[[X', Y] \xrightarrow{\etavar} [X, [X', Y] \ot X] \xrightarrow{[X, [X', Y] \ot f]} [X, [X', Y] \ot X'] \xrightarrow{[X, \evvar{X'}[Y]]} [X, Y].\]
    \end{enumerate}
\end{nota}

\begin{lem}
    \label{lem:ev}
    Let $\cat{C}$ be a closed monoidal category, $X$ and $Y$ objects in $\cat{C}$, and $\func[x]{\unit}{X}$ a morphism in $\cat{C}$.
    \begin{enumerate}
        \item \label{enum:ev-1} For all morphisms $\func[g]{Y}{Y'}$ in $\cat{C}$, we have $g \circ \ev{x} = \ev{x} \circ [X, g]$.
        \item \label{enum:ev-2} For all morphisms $\func[f]{X}{X'}$ in $\cat{C}$, we have $\ev{f \circ x} = \ev{x} \circ [f, Y]$.
    \end{enumerate}
\end{lem}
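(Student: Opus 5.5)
Both parts are naturality statements for the counit $\evvar{X}$ and the unit $\eta^X$ of the adjunction $(-) \ot X \dashv [X, -]$, combined with the naturality of the right unit constraint $\rucvar$. I will unfold $\ev{x}$ and $[f, Y]$ using the explicit composites in the Notation and push morphisms past one another.

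For \ref{enum:ev-1}, write $g \circ \ev{x} = g \circ \evvar{X}[Y] \circ ([X, Y] \ot x) \circ \ruc{[X, Y]}^{-1}$. Naturality of the counit $\evvar{X}$ in its argument gives $g \circ \evvar{X}[Y] = \evvar{X}[Y'] \circ ([X, g] \ot X)$. Bifunctoriality of $\ot$ gives $([X, g] \ot X) \circ ([X, Y] \ot x) = ([X, Y'] \ot x) \circ ([X, g] \ot \unit)$. Naturality of $\rucvar$ gives $([X, g] \ot \unit) \circ \ruc{[X, Y]}^{-1} = \ruc{[X, Y']}^{-1} \circ [X, g]$. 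Assembling these yields $\ev{x} \circ [X, g]$, as required.

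For \ref{enum:ev-2}, I first isolate the key identity
\[
\evvar{X}[Y] \circ ([f, Y] \ot X) = \evvar{X'}[Y] \circ ([X', Y] \ot f),
\]
which says that $[f, Y]$ is the mate of $f$. To prove it, I substitute the definition $[f, Y] = [X, \evvar{X'}[Y]] \circ [X, [X', Y] \ot f] \circ \etavar$. I then use naturality of $\evvar{X}$ to move $\evvar{X}$ past the two morphisms of the form $[X, h] \ot X$, which turns them into $h$ applied after $\evvar{X}$. Finally, the triangle identity $\evvar{X}[Z \ot X] \circ (\eta^X_Z \ot X) = \id{Z \ot X}$, with $Z = [X', Y]$, collapses the remaining composite. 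This step is the main obstacle, and it is only bookkeeping with the triangle identity.

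Given the key identity, I compute $\ev{x} \circ [f, Y] = \evvar{X}[Y] \circ ([X, Y] \ot x) \circ \ruc{[X,Y]}^{-1} \circ [f, Y]$. Naturality of $\rucvar$ rewrites $\ruc{[X,Y]}^{-1} \circ [f, Y]$ as $([f, Y] \ot \unit) \circ \ruc{[X', Y]}^{-1}$. Bifunctoriality then gives $([X,Y] \ot x) \circ ([f,Y] \ot \unit) = ([f,Y] \ot X) \circ ([X', Y] \ot x)$. Applying the key identity produces $\evvar{X'}[Y] \circ ([X', Y] \ot (f \circ x)) \circ \ruc{[X', Y]}^{-1}$, which is $\ev{f \circ x}$ by definition.
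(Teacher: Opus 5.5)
Your proposal is correct and follows the paper's proof. In part (1), the paper appeals to the naturality of the composite $\epsilon^X \circ ([X,-]\otimes x)\circ r^{-1}_{[X,-]}$, and you unpack this into the three constituent naturality squares. In part (2), the paper likewise uses the naturality of $((-)\otimes x)\circ r^{-1}$ to reduce to the mate identity $\epsilon^X_Y \circ ([f,Y]\otimes X) = \epsilon^{X'}_Y \circ ([X',Y]\otimes f)$, and proves it exactly as you do, by naturality of $\epsilon^X$ followed by the triangle identity.
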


\begin{proof}
    \begin{enumerate}
        \item This is rewritten as $g \circ \evvar{X}[Y] \circ ([X, Y] \ot x) \circ \ruc{[X, Y]}^{-1} = \evvar{X}[Y'] \circ ([X, Y'] \ot x) \circ \ruc{[X, Y']}^{-1} \circ [X, g]$, which follows from the naturality of $\evvar{X} \circ ([X, -] \ot x) \circ \ruc{[X, -]}^{-1}$.
        \item This equality is rewritten as
        \begin{align*}
            & \evvar{X'}[Y] \circ ([X', Y] \ot f) \circ ([X', Y] \ot x) \circ \ruc{[X', Y]}^{-1} \\
            & \qquad \qquad = \evvar{X}[Y] \circ ([X, Y] \ot x) \circ \ruc{[X, Y]}^{-1} \circ [X, \evvar{X'}[Y]] \circ [X, [X', Y] \ot f] \circ \etavar.
        \end{align*}
        Since
        \begin{align*}
            & ([X, Y] \ot x) \circ \ruc{[X, Y]}^{-1} \circ [X, \evvar{X'}[Y]] \circ [X, [X', Y] \ot f] \circ \etavar \\
            & \qquad \qquad  = ([X, \evvar{X'}[Y]] \ot X) \circ ([X, [X', Y] \ot f] \ot X) \circ (\etavar \ot X) \circ ([X', Y] \ot X) \circ \ruc{[X', Y]}^{-1}
        \end{align*}
        by the naturality of $((-) \ot x) \circ \rucvar^{-1}$, it suffices to prove that
        \[\evvar{X'}[Y] \circ ([X', Y] \ot f) = \evvar{X}[Y] \circ ([X, \evvar{X'}[Y]] \ot X) \circ ([X, [X', Y] \ot f] \ot X) \circ (\etavar \ot X).\]
        Since $\evvar{X}[Y] \circ ([X, \evvar{X'}[Y]] \ot X) \circ ([X, [X', Y] \ot f] \ot X) = \evvar{X'}[Y] \circ ([X', Y] \ot f) \circ \evvar{X}[[X', Y] \ot X]$ by the naturality of $\evvar{X}$, it remains to prove that $\evvar{X}[[X', Y] \ot X] \circ (\etavar \ot X) = \id{[X', Y] \ot X}$, which follows from (one of) the triangle identities $(\evvar{X} \star ((-) \ot X)) \circ (((-) \ot X) \star \eta^X) = \id{(-) \ot X}$ 
        (see \cite[page 85, (9)]{maclane}).
    \end{enumerate}
    \vspace{-1\baselineskip}
\end{proof}

\subsection{Linear categories}

Recall that a \term{$\Bbbk$-linear category} is a category which is enriched over the category of $\Bbbk$-modules.
In other words, a $\Bbbk$-linear category is a category $\cat{C}$ where the set $\hom{\cat{C}}{X}{Y}$ of morphisms from $X$ to $Y$ is endowed with a fixed structure of a $\Bbbk$-module (written $\linhom{C}{X}{Y}$) for all $X, Y \in \cat{C}$, such that the composition law $\func{\linhom{\cat{C}}{Y}{Z} \times \linhom{\cat{C}}{X}{Y}}{\linhom{\cat{C}}{X}{Z}}$ is $\Bbbk$-bilinear for all $X, Y, Z \in \cat{C}$.
We omit the definitions of \term{$\Bbbk$-linear functors}, \term{$\Bbbk$-linear natural transformations}, and \term{$\Bbbk$-linear equivalences}, which are just special cases of enriched functors, enriched transformations, and equivalences of enriched categories.
In what follows, we will not distinguish a $\Bbbk$-linear category and its \term{underlying category}.

The next result is standard in enriched category theory, and we omit the proof.

\begin{lem}
    \label{lem:k-lin}
    Let $\fct[F]{C}{D}$ be a $\Bbbk$-linear functor, and $M$ an object in $\cat{C}$.
    Then, the collection of $\Bbbk$-linear maps $\func{\linhom{C}{X}{M}}{\linhom{D}{F(X)}{F(M)}}$ indexed by $X \in \cat{C}$ induces a natural transformation $\func{\linhom{C}{-}{M}}{\linhom{D}{F(-)}{F(M)}}$, which is a natural isomorphism if $F$ is fully faithful in the sense that the $\Bbbk$-linear maps $\func{\linhom{C}{X}{Y}}{\linhom{D}{F(X)}{F(Y)}}$ are isomorphisms for all $X, Y \in \cat{C}$.
\end{lem}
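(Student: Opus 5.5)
The statement to prove is Lemma~\ref{lem:k-lin}. The plan is to write down the map explicitly and check naturality and invertibility directly from the axioms of a $\Bbbk$-linear functor.

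For each $X \in \cat{C}$, define $\theta_X \colon \linhom{C}{X}{M} \to \linhom{D}{F(X)}{F(M)}$ by $\theta_X(f) = F(f)$. Since $F$ is $\Bbbk$-linear, the action of $F$ on hom-objects is a morphism of $\Bbbk$-modules, so each $\theta_X$ is $\Bbbk$-linear. Here $\linhom{C}{-}{M}$ is regarded as a functor $\op{\cat{C}} \to \Bbbk\text{-}\mathrm{Mod}$ via precomposition, and $\linhom{D}{F(-)}{F(M)}$ is this construction for $\cat{D}$ precomposed with $\op{F}$.

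Naturality amounts to the following check. For a morphism $g \colon X' \to X$ in $\cat{C}$ and any $f \in \linhom{C}{X}{M}$, we need $\theta_{X'}(f \circ g) = \theta_X(f) \circ F(g)$, that is, $F(f \circ g) = F(f) \circ F(g)$. This is exactly functoriality of $F$, so the family $\theta = \{\theta_X\}_{X \in \cat{C}}$ is a natural transformation.

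For the second claim, assume $F$ is fully faithful in the stated sense. Then each $\theta_X$ is precisely the map $\linhom{C}{X}{M} \to \linhom{D}{F(X)}{F(M)}$ induced by $F$ with $Y = M$, so it is a $\Bbbk$-linear isomorphism by hypothesis. A natural transformation whose components are all isomorphisms is a natural isomorphism: the componentwise inverses are automatically natural. Hence $\theta$ is a natural isomorphism.

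I expect no real obstacle. The only point needing care is the variance bookkeeping: the functors are contravariant in $X$, so naturality has to be checked along morphisms $X' \to X$, as done above.
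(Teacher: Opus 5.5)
Your proof is correct. The paper omits the argument as standard in enriched category theory, and your direct verification is exactly that standard argument: the components $f \mapsto F(f)$ are $\Bbbk$-linear because $F$ is, they are natural because $F(f \circ g) = F(f) \circ F(g)$, and they are invertible by full faithfulness with $Y = M$ (over $\Bbbk$-modules, enriched naturality reduces to ordinary naturality, so nothing further is needed).
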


\section{Braided cohomology of braided coalgebras}
\label{sec:3}

In this section, we begin by recalling the notion of braided coalgebras in monoidal categories in the sense of Ardizzoni--Menini \cite{ardizzoni-menini}.
To every braided coalgebra in a monoidal category $\cat{C}$, we can associate a strict monoidal functor from $\op{\augbr}$ (called a strict monoidal {\ABSO} in our terminology), which takes values in $\cat{C}$ when $\cat{C}$ is strict (see Proposition~\ref{prop:brcoalg-to-ABSO}), and in $\Fun{C}{C}$ in the general case (see Proposition~\ref{prop:nonstr-augbrobj}).
We will then use this to define the notion of the braided cochain complex and the braided cohomology of a braided coalgebra in a $\Bbbk$-linear monoidal category in Subsection~\ref{subsec:3.2}.
We also give a functoriality result on the braided cochain complex (see Lemma~\ref{lem:KB}) which will be useful later, and we provide an explicit description of an {\ABSO} associated to a braided coalgebra, defined in Definition~\ref{defn:br-cpx}~\ref{br-cpx:ABSO}, up to isomorphism (see Remark~\ref{rem:wTC}).

\subsection{Braided coalgebras and monoidal {\ABSO}s}
\label{subsec:3.1}

We review here the notion of braided coalgebras, and define the notion of (monoidal) {\ABbsbSO}s.
Then, we associate, to every braided coalgebra $\brobj{C}$ in a monoidal category $\cat{C}$, a strict monoidal {\ABSO} $\wTcalC$ in $\EndCat{C}$ (see Proposition~\ref{prop:nonstr-augbrobj}).

\begin{defn}[{\cite[Definition 2.1 (1) and (3)]{ardizzoni-menini}}]
    \label{defn:brcoalg}
    Let $\cat{C}$ be a monoidal category.
    \begin{enumerate}
        \item A triple $\coalg{C}$ is called a \term{coalgebra in $\cat{C}$} if $C$ is an object in $\cat{C}$, $\func[\Delta]{C}{C \ot C}$ and $\func[\eps]{C}{\unit}$ are morphisms in $\cat{C}$, and the following equalities are satisfied:
        \begin{align}
            \assc{C}{C}{C} \circ (\Delta \ot C) \circ \Delta &= (C \ot \Delta) \circ \Delta; \label{eq:coalg-1} \\
            (\eps \ot C) \circ \Delta &= \luc{C}^{-1}; \label{eq:coalg-2} \\
            (C \ot \eps) \circ \Delta &= \ruc{C}^{-1}. \nonumber 
        \end{align}
        \item A pair $\brobj{C}$ is called a \term{braided object in $\cat{C}$} if $C$ is an object in $\cat{C}$, $\func[\br]{C \ot C}{C \ot C}$ is an isomorphism in $\cat{C}$, and the following equality is satisfied:
        \begin{equation}
            \begin{split}
                \label{eq:braid}
                & \assc{C}{C}{C} \circ (\br \ot C) \circ \assc{C}{C}{C}^{-1} \circ (C \ot \br) \circ \assc{C}{C}{C} \circ (\br \ot C) \\
                & \qquad \qquad \qquad \qquad = (C \ot \br) \circ \assc{C}{C}{C} \circ (\br \ot C) \circ \assc{C}{C}{C}^{-1} \circ (C \ot \br) \circ \assc{C}{C}{C}.
            \end{split}
        \end{equation}
        \item A quadruple $\brcoalg{C}$ is called a \term{braided coalgebra in $\cat{C}$} if $\coalg{C}$ is a coalgebra in $\cat{C}$, $\brobj{C}$ is a braided object in $\cat{C}$, and the following equalities are satisfied:
        \begin{align}
            \luc{C} \circ (\eps \ot C) \circ \br &= \ruc{C} \circ (C \ot \eps); \label{eq:brcoalg-1} \\
            \ruc{C} \circ (C \ot \eps) \circ \br &= \luc{C} \circ (\eps \ot C); \nonumber \\
            \assc{C}{C}{C} \circ (\Delta \ot C) \circ \br &= (C \ot \br) \circ \assc{C}{C}{C} \circ (\br \ot C) \circ \assc{C}{C}{C}^{-1} \circ (C \ot \Delta); \label{eq:brcoalg-2} \\
            \assc{C}{C}{C}^{-1} \circ (C \ot \Delta) \circ \br &= (\br \ot C) \circ \assc{C}{C}{C}^{-1} \circ (C \ot \br) \circ \assc{C}{C}{C} \circ (\Delta \ot C). \nonumber 
        \end{align}
    \end{enumerate}
\end{defn}

\begin{rem}
    We refer to Baez \cite[page 886]{baez} for the dual notion (called an \term[r-algebra]{$r$-algebra} by the author), and to Takeuchi \cite[Definition 5.1]{takeuchi} for the notion of a \term{braided bialgebra}.
\end{rem}

\begin{ex}
    \label{ex:brcoalg-in-opaugbr}
    $(\gene{1}, \codegevar, \cofacevar, \copermvar)$ is a braided coalgebra in $\op{\augbr}$.
    Indeed, it follows from conditions~\ref{enum:cosimp-1}--\ref{enum:cosimp-3} of Definition~\ref{defn:augbr} that $(\gene{1}, \codegevar, \cofacevar)$ is a coalgebra in $\op{\augbr}$ (see \cite[page 175]{maclane} for a dual statement).
    We write $\opcirc$ for the composition law in $\op{\augbr}$.
    It follows from condition~\ref{enum:coaugbr-0} that $\copermvar$ is an isomorphism in $\op{\augbr}$, and from conditions~\ref{enum:coaugbr-1} and \ref{enum:coaugbr-mon-3} that the equalities
    \begin{align*}
        (\copermvar \ot \gene{1}) \opcirc (\gene{1} \ot \copermvar) \opcirc (\copermvar \ot \gene{1}) 
        &= \coperm{1}{2} \opcirc \coperm{2}{2} \opcirc \coperm{1}{2} 
        = \coperm{2}{2} \opcirc \coperm{1}{2} \opcirc \coperm{2}{2} \\
        &= (\gene{1} \ot \copermvar) \opcirc (\copermvar \ot \gene{1}) \opcirc (\gene{1} \ot \copermvar)
    \end{align*}
    hold, and hence $(\gene{1}, \copermvar)$ is a braided object in $\op{\augbr}$.
    On the other hand, (\ref{eq:brcoalg-1}) is verified as
    \[(\cofacevar \ot \gene{1}) \opcirc \copermvar = \coface{0}{1} \opcirc \copermvar = \coface{1}{1} = \gene{1} \ot \cofacevar,\]
    where the first and third equalities follow from condition~\ref{enum:coaugbr-mon-1}, and the second equality from condition~\ref{enum:coaugbr-3}, whereas (\ref{eq:brcoalg-2}) is verified as
    \[(\codegevar \ot \gene{1}) \opcirc \copermvar 
    = \codege{0}{1} \opcirc \copermvar 
    = \coperm{2}{2} \opcirc \coperm{1}{2} \opcirc \codege{1}{1} 
    = (\gene{1} \ot \copermvar) \opcirc (\copermvar \ot \gene{1}) \opcirc (\gene{1} \ot \codegevar),\]
    where the first and third equalities follow from conditions~\ref{enum:coaugbr-mon-2} and \ref{enum:coaugbr-mon-3}, and the second equality from condition~\ref{enum:coaugbr-4}.
    The remaining equalities can be verified similarly.
\end{ex}

We now explain how coalgebras in braided monoidal categories can be promoted to braided coalgebras in a natural manner.

\begin{ex}
    \label{ex:coalg-br}
    Let $\brmoncat{C}$ be a braided monoidal category.
    \begin{enumerate}
        \item \label{enum:ex-1} Let $C$ be an object in $\cat{C}$.
        Then, $(C, \braid{C}{C})$ is a braided object in $\cat{C}$, since $\braid{C}{C}$ is an isomorphism in $\cat{C}$ and since (\ref{eq:braid}) follows from \cite[Theorem XIII.1.3]{kassel}.
        \item \label{enum:ex-2} Let $\coalg{C}$ be a coalgebra in $\cat{C}$.
        Then, $(C, \Delta, \eps, \braid{C}{C})$ is a braided coalgebra in $\cat{C}$.
        Indeed, it follows from \ref{enum:ex-1} that $(C, \braid{C}{C})$ is a braided object in $\cat{C}$, and we have
        \begin{align}
            \label{eq:ex-1} \luc{C} \circ (\eps \ot C) \circ \braid{C}{C} 
            &= \luc{C} \circ \braid{C}{\unit} \circ (C \ot \eps) 
            = \ruc{C} \circ (C \ot \eps); \\
            \label{eq:ex-2} \ruc{C} \circ (C \ot \eps) \circ \braid{C}{C} 
            &= \ruc{C} \circ \braid{\unit}{C} \circ (\eps \ot C) 
            = \luc{C} \circ (\eps \ot C); \\
            \begin{split}
                \label{eq:ex-3} \assc{C}{C}{C} \circ (\Delta \ot C) \circ \braid{C}{C} 
                &= \assc{C}{C}{C} \circ \braid{C}{C \ot C} \circ (C \ot \Delta) \\
                &= (C \ot \braid{C}{C}) \circ \assc{C}{C}{C} \circ (\braid{C}{C} \ot C) \circ \assc{C}{C}{C}^{-1} \circ (C \ot \Delta);
            \end{split} \\
            \begin{split}
                \label{eq:ex-4} \assc{C}{C}{C}^{-1} \circ (C \ot \Delta) \circ \braid{C}{C} 
                &= \assc{C}{C}{C}^{-1} \circ \braid{C \ot C}{C} \circ (\Delta \ot C) \\
                &= (\braid{C}{C} \ot C) \circ \assc{C}{C}{C}^{-1} \circ (C \ot \braid{C}{C}) \circ \assc{C}{C}{C} \circ (\Delta \ot C),
            \end{split}
        \end{align}
        where the first equalities follow from (\ref{eq:brmoncat-1}), the second equalities of (\ref{eq:ex-1}) and (\ref{eq:ex-2}) follow from \cite[Proposition XIII.1.2]{kassel}, and those of (\ref{eq:ex-3}) and (\ref{eq:ex-4}) follow from (\ref{eq:brmoncat-2}) and (\ref{eq:brmoncat-3}), respectively.
    \end{enumerate}
\end{ex}

We will often abbreviate a braided coalgebra $\brcoalg{C}$ to $\brobj{C}$.

In Subsection~\ref{subsec:4.3}, we will deal with more concrete examples of braided coalgebras, which arise from quasi-triangular bialgebras (see Definition~\ref{defn:qtb} and Remark~\ref{rem:qtb-to-brcoalg}).

\begin{defn}
    Let $\cat{C}$ be a category.
    \begin{enumerate}
        \item An \term{{\ABSO} in $\cat{C}$} is a functor $\func{\op{\augbr}}{\cat{C}}$.
        \item An \term{{\ABsSO} in $\cat{C}$} is a functor $\func{\op{\augbrsemi}}{\cat{C}}$.
    \end{enumerate}
\end{defn}

\begin{defn}
    \label{defn:MABSO}
    Let $\cat{C}$ be a monoidal category.
    A \term{monoidal {\ABSO} in $\cat{C}$} is a monoidal functor $\func{\op{\augbr}}{\cat{C}}$.
    When $\cat{C}$ is strict as a monoidal category, a monoidal {\ABSO} in $\cat{C}$ is said to be \term{strict} if it is strict as a monoidal functor.
\end{defn}

A (\textit{strict}) \textit{monoidal {\ABsSO}} is defined similarly.

\begin{rem}
    By precomposing with the strict monoidal functor $\func[\incl]{\op{\augbrsemi}}{\op{\augbr}}$ of Remark~\ref{rem:augbr} \ref{enum:rem-augbr-1}, any (monoidal) {\ABSO} gives rise to a (monoidal) {\ABsSO}, and this transference preserves strictness.
\end{rem}

\begin{nota}
    Let $\cat{C}$ be a (monoidal) category.
    A (monoidal) {\ABbsbSO} in $\cat{C}$ is typically denoted by $\sobj{X}$ or $X$.
    In these cases, we write $X_n$ for its value on $\gene{n}$ for all $n \ge 0$.
\end{nota}

\begin{nota}
    \label{nota:ABSO}
    Let $\cat{C}$ be a category, and $\sobj{X}$ an {\ABSO} in $\cat{C}$.
    \begin{enumerate}
        \item \label{enum:face} For all $0 \le j \le n$, we write $\face{j}{n} \coloneqq X(\coface{j}{n})$, which is a morphism from $X_{n + 1}$ to $X_n$ in $\cat{C}$.
        \item For all $0 \le j \le n$, we write $\dege{j}{n} \coloneqq X(\codege{j}{n})$, which is a morphism from $X_{n + 1}$ to $X_{n + 2}$ in $\cat{C}$.
        \item \label{enum:perm} For all $1 \le i \le n$, we write $\perm{i}{n} \coloneqq X(\coperm{i}{n})$, which is a morphism from $X_{n + 1}$ to $X_{n + 1}$ in $\cat{C}$.
    \end{enumerate}
\end{nota}

We use the symbols of Notation~\ref{nota:ABSO}~\ref{enum:face} and \ref{enum:perm} for {\ABsSO}s as well.

Let $\cat{C}$ be a monoidal category, and $\sobj{X}$ a monoidal {\ABSO} in $\cat{C}$.
Since $(\gene{1}, \codegevar, \cofacevar, \copermvar)$ is a braided coalgebra in $\op{\augbr}$ by Example~\ref{ex:brcoalg-in-opaugbr}, one can construct a braided coalgebra in $\cat{C}$ using Proposition~\ref{prop:brcoalg-monfct} below.
The following proposition gives a converse assignment in the case when the monoidal category $\cat{C}$ is strict.

\begin{prop}
    \label{prop:brcoalg-to-ABSO}
	Let $\cat{C}$ be a strict monoidal category, and $\brcoalg{C}$ a braided coalgebra in $\cat{C}$.
    Then, there exists a unique strict monoidal {\ABSO} $\sobj{X}$ in $\cat{C}$ which satisfies the equalities $X_1 = C$, $X(\cofacevar) = \eps$, $X(\codegevar) = \Delta$, and $X(\copermvar) = \br$.
\end{prop}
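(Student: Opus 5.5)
Since $\cat{C}$ is strict and $X$ is required to be strict monoidal, $X$ is forced on objects by $X_n = \otfold{n}$, with $X_0 = \unit$. On generating morphisms it is forced by the Remark after Definition~\ref{defn:augbrsemi}:
\begin{itemize}
  \item $\face{j}{n} = \otfold{j} \ot \eps \ot \otfold{n-j}$,
  \item $\dege{j}{n} = \otfold{j} \ot \Delta \ot \otfold{n-j}$,
  \item $\perm{i}{n} = \otfold{i-1} \ot \br \ot \otfold{n-i}$,
  \item $X((\coperm{i}{n})^{-1}) = \otfold{i-1} \ot \br^{-1} \ot \otfold{n-i}$.
\end{itemize}
By Remark~\ref{rem:generate-augbr}, every morphism of $\augbr$ (hence of $\op{\augbr}$) is a composite of generating morphisms, so a functor agreeing with these values is unique. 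This settles uniqueness.

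For existence, I will use the presentation in Definition~\ref{defn:augbr} and define a functor $\op{\augbr} \to \cat{C}$ by the formulas above. It suffices to check that the images satisfy the (opposite of the) relations \ref{enum:cosimp-1}--\ref{enum:coaugbr-4}.
\begin{itemize}
  \item Relations \ref{enum:cosimp-1} and \ref{enum:cosimp-2}, as well as the cases $i<j$ and $i>j+1$ of \ref{enum:cosimp-3}, \ref{enum:coaugbr-2}, \ref{enum:coaugbr-3} and \ref{enum:coaugbr-4}, involve generators acting on disjoint tensor factors. They follow from the interchange law (functoriality of $\ot$) in the strict monoidal category $\cat{C}$.
  \item The remaining cases are "local" and, after tensoring with identities, reduce to the axioms of Definition~\ref{defn:brcoalg} in the strict setting. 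The cases $i=j, j+1$ of \ref{enum:cosimp-3} are counitality. The second relation of \ref{enum:cosimp-2} at the overlap is coassociativity \eqref{eq:coalg-1}. Relation \ref{enum:coaugbr-0} holds because $\br$ is invertible. Relation \ref{enum:coaugbr-1} is \eqref{eq:braid}.
  \item The cases $i=j$ and $i=j+1$ of \ref{enum:coaugbr-3} are \eqref{eq:brcoalg-1} and its companion. The cases $i=j$ and $i=j+1$ of \ref{enum:coaugbr-4} are \eqref{eq:brcoalg-2} and its companion.
\end{itemize}
Example~\ref{ex:brcoalg-in-opaugbr} already matches each of these axioms with the corresponding relation, so I just reverse that dictionary.

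It remains to show that the resulting functor $X$ is strict monoidal, i.e.\ $X(f \ot g) = X(f) \ot X(g)$ for all morphisms $f,g$. On objects this holds by definition. For a fixed generating morphism $g$ and object $\gene{m}$, the conditions \ref{enum:coaugbr-mon-1}--\ref{enum:coaugbr-mon-3} give $X(\gene{m} \ot g) = \otfold{m} \ot X(g)$ and $X(g \ot \gene{m}) = X(g) \ot \otfold{m}$. Both sides are functors in the variable $g$, so these identities extend to all morphisms $g$ by Remark~\ref{rem:generate-augbr}. For general $f\colon a\to a'$ and $g\colon b \to b'$, I then write $f \ot g = (f \ot b') \circ (a \ot g)$ and apply $X$, using functoriality of $X$ and of $\ot$ in $\cat{C}$.

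The main obstacle is the bookkeeping in the existence step. One must check carefully that every relation in Definition~\ref{defn:augbr}, with all its index cases, is respected. The delicate points are matching the two-term braid composites in \ref{enum:coaugbr-4} against \eqref{eq:brcoalg-2} with the correct ordering, since composition is reversed in $\op{\augbr}$, and correctly handling the boundary indices.
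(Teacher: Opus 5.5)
Your proof is correct and follows the paper's argument essentially step for step. Uniqueness comes from strictness together with generation by the generating morphisms; existence comes from verifying the opposite of the relations \ref{enum:cosimp-1}--\ref{enum:coaugbr-4} for the formulas (\ref{eq:wTC}); and strict monoidality comes from checking tensoring with $\gene{m}$ on generators and extending via Remark~\ref{rem:generate-augbr}. The only slips are bookkeeping: the case $i=j$ of \ref{enum:cosimp-2} is coassociativity, not a disjoint-factor relation (as your second bullet in fact says), and in \ref{enum:coaugbr-3} and \ref{enum:coaugbr-4} it is the case $i=j+1$ that uses (\ref{eq:brcoalg-1}), resp.\ (\ref{eq:brcoalg-2}), while the case $i=j$ uses the unlabeled companions.
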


\begin{proof}
    We first prove the uniqueness.
    Let $\sobj{X}$ be any strict monoidal {\ABSO} in $\cat{C}$.
    By Definition~\ref{defn:augbr}, $\sobj{X}$ can be recovered from the collection $\famnat{X_n}$ of objects in $\cat{C}$ and the collections
    \[\family{\func[\face{j}{n}]{X_{n + 1}}{X_n}}{0 \le j \le n}; 
    \quad \family{\func[\dege{j}{n}]{X_{n + 1}}{X_{n + 2}}}{0 \le j \le n}; 
    \quad \family{\func[\perm{i}{n}]{X_{n + 1}}{X_{n + 1}}}{1 \le i \le n}\]
    of morphisms in $\cat{C}$.
    Since $X_n = X(\otfold[\gene{1}]{n}) = \otfold[(X_1)]{n}$ for all $n \ge 0$,
    \begin{align*}
        \face{j}{n} 
        &= X(\gene{j} \ot \cofacevar \ot \gene{n - j}) = \otfold[(X_1)]{j} \ot X(\cofacevar) \ot \otfoldv[(X_1)]{n - j}; \\
        \dege{j}{n} 
        &= X(\gene{j} \ot \codegevar \ot \gene{n - j}) = \otfold[(X_1)]{j} \ot X(\codegevar) \ot \otfoldv[(X_1)]{n - j}
    \end{align*}
    for all $0 \le j \le n$, and $\perm{i}{n} = X(\gene{i - 1} \ot \copermvar \ot \gene{n - i}) = \otfoldv[(X_1)]{i - 1} \ot X(\copermvar) \ot \otfoldv[(X_1)]{n - i}$ for all $1 \le i \le n$, it follows that $\sobj{X}$ is uniquely determined from the quadruple $(X_1, X(\cofacevar), X(\codegevar), X(\copermvar))$.
    This proves the uniqueness.
    Next, we prove the existence.
    We define collections
    \[\family{\func[\face{j}{n}]{\otfoldv{n + 1}}{\otfold{n}}}{0 \le j \le n}; 
    \quad \family{\func[\dege{j}{n}]{\otfoldv{n + 1}}{\otfoldv{n + 2}}}{0 \le j \le n}; 
    \quad \family{\func[\perm{i}{n}]{\otfoldv{n + 1}}{\otfoldv{n + 1}}}{1 \le i \le n}\]
    of morphisms in $\cat{C}$ as follows:
    \begin{equation}
        \label{eq:wTC}
        \face{j}{n} \coloneqq \otfold{j} \ot \eps \ot \otfoldv{n - j}; 
        \quad \dege{j}{n} \coloneqq \otfold{j} \ot \Delta \ot \otfoldv{n - j}; 
        \quad \perm{i}{n} \coloneqq \otfoldv{i - 1} \ot \br \ot \otfoldv{n - i}.
    \end{equation}
    To define an {\ABSO} in $\cat{C}$ from these collections of morphisms and from the collection $\famnat{\otfold{n}}$ of objects, it suffices to prove that these collections satisfy the (dual) conditions of \ref{enum:cosimp-1} through \ref{enum:coaugbr-4} of Definition~\ref{defn:augbr}.
    The condition~\ref{enum:cosimp-1} is verified as follows:
    \begin{align*}
        \facei{i}{n - 1} \circ \face{j}{n} 
        &= (\otfold{i} \ot \eps \ot \otfoldv{n - i - 1}) \circ (\otfold{j} \ot \eps \ot \otfoldv{n - j}) \\
        &= \otfold{i} \ot ((\eps \ot \otfoldv{j - i - 1} \ot \unit) \circ (C \ot \otfoldv{j - i - 1} \ot \eps)) \ot \otfoldv{n - j} \\
        &= \otfold{i} \ot ((\unit \ot \otfoldv{j - i - 1} \ot \eps) \circ (\eps \ot \otfoldv{j - i - 1} \ot C)) \ot \otfoldv{n - j} \\
        &= (\otfoldv{j - 1} \ot \eps \ot \otfoldv{n - j}) \circ (\otfold{i} \ot \eps \ot \otfoldv{n - i}) 
        = \face{j - 1}{n - 1} \circ \facei{i}{n}.
    \end{align*}
    We note that condition~\ref{enum:cosimp-2} in the case $i < j$, conditions~\ref{enum:cosimp-3}, \ref{enum:coaugbr-3}, and \ref{enum:coaugbr-4} in the case $i < j$ or $i > j + 1$, and condition~\ref{enum:coaugbr-1} follow directly from (\ref{eq:wTC}) as well.
    The condition~\ref{enum:cosimp-2} in the case $i = j$ is verified as
    \begin{align*}
        \dege{j}{n + 1} \circ \dege{j}{n} 
        &= (\otfold{j} \ot \Delta \ot \otfoldv{n - j + 1}) \circ (\otfold{j} \ot \Delta \ot \otfoldv{n - j}) \\
        &= \otfold{j} \ot ((\Delta \ot C) \circ \Delta) \ot \otfoldv{n - j} \\
        &= \otfold{j} \ot ((C \ot \Delta) \circ \Delta) \ot \otfoldv{n - j} \\
        &= (\otfoldv{j + 1} \ot \Delta \ot \otfoldv{n - j}) \circ (\otfold{j} \ot \Delta \ot \otfoldv{n - j}) 
        = \dege{j + 1}{n + 1} \circ \dege{j}{n},
    \end{align*}
    where the third equality follows from (\ref{eq:coalg-1}).
    The condition~\ref{enum:cosimp-3} in the case $i = j$ is verified as
    \begin{align*}
        \face{j}{n + 1} \circ \dege{j}{n} 
        &= (\otfold{j} \ot \eps \ot \otfoldv{n - j + 1}) \circ (\otfold{j} \ot \Delta \ot \otfoldv{n - j}) \\
        &= \otfold{j} \ot ((\eps \ot C) \circ \Delta) \ot \otfoldv{n - j} 
        = \otfold{j} \ot \id{C} \ot \otfoldv{n - j} 
        = \id{\otfoldv{n + 1}},
    \end{align*}
    where the third equality follows from (\ref{eq:coalg-2}).
    The condition~\ref{enum:coaugbr-0} follows since $\br$ is an isomorphism in $\cat{C}$.
    The condition~\ref{enum:coaugbr-2} is verified as
    \begin{align*}
        \perm{i}{n} \circ \perm{i + 1}{n} \circ \perm{i}{n} 
        &= (\otfoldv{i - 1} \ot \br \ot \otfoldv{n - i}) \circ (\otfold{i} \ot \br \ot \otfoldv{n - i - 1}) \circ (\otfoldv{i - 1} \ot \br \ot \otfoldv{n - i}) \\
        &= \otfoldv{i - 1} \ot ((\br \ot C) \circ (C \ot \br) \circ (\br \ot C)) \ot \otfoldv{n - i - 1} \\
        &= \otfoldv{i - 1} \ot ((C \ot \br) \circ (\br \ot C) \circ (C \ot \br)) \ot \otfoldv{n - i - 1} \\
        &= (\otfold{i} \ot \br \ot \otfoldv{n - i - 1}) \circ (\otfoldv{i - 1} \ot \br \ot \otfoldv{n - i}) \circ (\otfold{i} \ot \br \ot \otfoldv{n - i - 1}) \\
        &= \perm{i + 1}{n} \circ \perm{i}{n} \circ \perm{i + 1}{n},
    \end{align*}
    where the third equality follows from (\ref{eq:braid}).
    The condition~\ref{enum:coaugbr-3} in the case $i = j + 1$ is verified as
    \begin{align*}
        \face{j}{n} \circ \permj{j + 1}{n} 
        &= (\otfold{j} \ot \eps \ot \otfoldv{n - j}) \circ (\otfold{j} \ot \br \ot \otfoldv{n - j - 1}) \\
        &= \otfold{j} \ot ((\eps \ot C) \circ \br) \ot \otfoldv{n - j - 1} \\
        &= \otfold{j} \ot (C \ot \eps) \ot \otfoldv{n - j - 1} 
        = \otfoldv{j + 1} \ot \eps \ot \otfoldv{n - j - 1} 
        = \face{j + 1}{n},
    \end{align*}
    where the third equality follows from (\ref{eq:brcoalg-1}).
    The condition~\ref{enum:coaugbr-4} in the case $i = j + 1$ is verified as
    \begin{align*}
        \dege{j}{n} \circ \perm{j + 1}{n} 
        &= (\otfold{j} \ot \Delta \ot \otfoldv{n - j}) \circ (\otfold{j} \ot \br \ot \otfoldv{n - j - 1}) \\
        &= \otfold{j} \ot ((\Delta \ot C) \circ \br) \ot \otfoldv{n - j - 1} \\
        &= \otfold{j} \ot ((C \ot \br) \circ (\br \ot C) \circ (C \ot \Delta)) \ot \otfoldv{n - j - 1} \\
        &= (\otfoldv{j + 1} \ot \br \ot \otfoldv{n - j - 1}) \circ (\otfold{j} \ot \br \ot \otfoldv{n - j}) \circ (\otfoldv{j + 1} \ot \Delta \ot \otfoldv{n - j - 1}) \\
        &= \permj{j + 2}{n + 1} \circ \permj{j + 1}{n + 1} \circ \dege{j + 1}{n},
    \end{align*}
    where the third equality follows from (\ref{eq:brcoalg-2}).
    The remaining conditions follow from similar arguments, using the unlabeled equalities of Definition~\ref{defn:brcoalg}.
    We thus obtain an {\ABSO} $\sobj{X}$ in $\cat{C}$ which satisfies the displayed equalities.
    It remains to show that the functor $\sobj{X}$ is strict monoidal.
    We have $X_m \ot X_n = \otfold{m} \ot \otfold{n} = \otfoldv{m + n} = X_{m + n}$ for all $m, n \ge 0$ and $X_0 = \otfold{0} = \unit$ by definition.
    Note that the set $\{(\phi, \phi') \mid X(\phi) \ot X(\phi') = X(\phi \ot \phi')\}$ contains all the pairs of the identity morphisms, and is closed under pointwise composition.
    By Remark~\ref{rem:generate-augbr}, it is enough to verify that $X_m \ot X(\phi) = X(\gene{m} \ot \phi)$ and $X(\phi) \ot X_m = X(\phi \ot \gene{m})$ hold for all $m \ge 0$ and all the generating morphisms $\phi$ of $\augbr$.
    We only prove the former equality; the latter can be proven similarly.
    For $0 \le j \le n$, we have
    \begin{align*}
        X_m \ot X(\coface{j}{n}) 
        &= \otfold{m} \ot \otfold{j} \ot \Delta \ot \otfoldv{n - j} 
        = \otfoldv{m + j} \ot \Delta \ot \otfoldv{n - j} 
        = X(\coface{m + j}{m + n}) 
        = X(\gene{m} \ot \coface{j}{n}); \\
        X_m \ot X(\codege{j}{n}) 
        &= \otfold{m} \ot \otfold{j} \ot \eps \ot \otfoldv{n - j} 
        = \otfoldv{m + j} \ot \eps \ot \otfoldv{n - j} 
        = X(\codege{m + j}{m + n}) 
        = X(\gene{m} \ot \codege{j}{n}),
    \end{align*}
    where we used conditions~\ref{enum:coaugbr-mon-1} and \ref{enum:coaugbr-mon-2} of Definition~\ref{defn:augbr} in the fourth equality, and for 
    $1 \le i \le n$, we have
    \begin{align*}
        X_m \ot X(\coperm{i}{n}) 
        &= \otfold{m} \ot \otfoldv{i - 1} \ot \br \ot \otfoldv{n - i} 
        = \otfoldv{m + i - 1} \ot \br \ot \otfoldv{n - i} 
        = X(\coperm{m + i}{m + n}) 
        = X(\gene{m} \ot \coperm{i}{n}),
    \end{align*}
    where we used condition~\ref{enum:coaugbr-mon-3} of Definition~\ref{defn:augbr} in the fourth equality.
    The case $\phi = (\coperm{i}{n})^{-1}$ can be verified similarly, and it can also be deduced from the case $\phi = \coperm{i}{n}$, since the set mentioned above is closed under taking pointwise inverses.
    This completes the proof.
\end{proof}

\begin{rem}
    Let $\cat{C}$ be a strict monoidal category.
    The assignment $\brcoalg{C} \mapsto \sobj{X}$ of Proposition~\ref{prop:brcoalg-to-ABSO} induces a bijective correspondence between braided coalgebras in $\cat{C}$ and strict monoidal {\ABSO}s in $\cat{C}$.
    We note that $\augbr$ is equipped with a unique braiding $b$ such that $b_{\gene{1}, \gene{1}} = \copermvar$ (see \cite[Definition 4.2.1]{angelini-knoll-chan-gerhardt-merling-peroux}).
    Given a braiding $\br$ on $\cat{C}$, we obtain as a restriction a bijective correspondence between braided coalgebras of the form $(C, \Delta, \eps, \braid{C}{C})$ in $\cat{C}$ and strict braided monoidal functors from $(\op{\augbr}, b)$ to $\brmoncat{C}$, and the former objects can be identified with coalgebras in $\cat{C}$.
    It follows that strict braided monoidal functors from $(\op{\augbr}, b)$ to $\brmoncat{C}$ correspond bijectively to coalgebras in $\cat{C}$.
    This fact appears, for example, in \cite[Observation 4.2.2]{angelini-knoll-chan-gerhardt-merling-peroux}.
\end{rem}

Let $\func[\imath]{\cat{C}}{\wt{\cat{C}}}$ a monoidal functor, and suppose that $\wt{\cat{C}}$ is strict.
Given a braided coalgebra $\brobj{C}$ in $\cat{C}$, one might attempt to construct a monoidal {\ABSO} in $\wt{\cat{C}}$, either by
\begin{enumerate}[label=(\alph*)]
	\item passing $\brobj{C}$ to a braided coalgebra in $\wt{\cat{C}}$ via $\imath$, and then using Proposition~\ref{prop:brcoalg-to-ABSO}, or
    \item formulating Proposition~\ref{prop:brcoalg-to-ABSO} in the non-strict setting to pass $\brobj{C}$ to a (possibly non-strict) monoidal {\ABSO} in $\cat{C}$, and then postcomposing it with $\imath$.
\end{enumerate}
As it would be tedious to take the second approach, we prove that a monoidal functor transfers braided coalgebras living in different monoidal categories.

\begin{prop}[{\cite[Proposition 2.5]{ardizzoni-menini}}]
    \label{prop:brcoalg-monfct}
    Let $\monfctvar{F}{C}{D}$ be a monoidal functor, and $\brcoalg{C}$ a braided coalgebra in $\cat{C}$.
	Then, $\brcoalgv{F(C)}$ is a braided coalgebra in $\cat{D}$, where we set
    \[\Delta' \coloneqq \monfctmul{F}{C}{C}^{-1} \circ F(\Delta); 
	\quad \eps' \coloneqq \monfctuni{F}^{-1} \circ F(\eps); 
	\quad \br' \coloneqq \monfctmul{F}{C}{C}^{-1} \circ F(\br) \circ \monfctmul{F}{C}{C}.\]
\end{prop}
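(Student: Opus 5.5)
We verify the axioms of Definition~\ref{defn:brcoalg} for $(F(C), \Delta', \eps', \br')$ one at a time. Each axiom in $\cat{D}$ is turned into $F$ applied to the corresponding axiom in $\cat{C}$, using only the coherence data of $F$: naturality of $\monfctmulvar{F}$ in \eqref{eq:monfct-0}, the associativity condition \eqref{eq:monfct-1}, and the unit conditions \eqref{eq:monfct-2-l}, \eqref{eq:monfct-2-r}. Throughout, $\monfctmul{F}{-}{-}$ and $\monfctuni{F}$ are invertible, so they can be moved freely from one side of an equation to the other.

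First, the coalgebra axioms. For coassociativity \eqref{eq:coalg-1}, expand $(\Delta' \ot F(C)) \circ \Delta'$ and use \eqref{eq:monfct-0} with $f = \Delta$, $g = \id{C}$. This gives
\[(\Delta' \ot F(C)) \circ \Delta' = (\monfctmul{F}{C}{C}^{-1} \ot F(C)) \circ \monfctmul{F}{C \ot C}{C}^{-1} \circ F((\Delta \ot C) \circ \Delta),\]
and symmetrically on the other side. Applying \eqref{eq:monfct-1} with $X = Y = Z = C$ converts $\asscv{F(C)}{F(C)}{F(C)}$ into $F(\assc{C}{C}{C})$ conjugated by the $\monfctmulvar{F}$'s. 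Coassociativity in $\cat{D}$ then reduces to $F$ applied to \eqref{eq:coalg-1}. The counit laws go the same way: expand $(\eps' \ot F(C)) \circ \Delta'$, move $\monfctuni{F}$ through via \eqref{eq:monfct-0}, and apply \eqref{eq:monfct-2-l} to identify $\lucv{F(C)}^{-1}$ with $(\monfctuni{F}^{-1} \ot F(C)) \circ \monfctmul{F}{\unit}{C}^{-1} \circ F(\luc{C}^{-1})$. The right counit law is handled symmetrically with \eqref{eq:monfct-2-r}.

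Next, the braided object axiom \eqref{eq:braid}. The morphism $\br'$ is an isomorphism, being a composite of isomorphisms. In the composite $(\br' \ot F(C)) \circ \ldots$, write each factor $\br' \ot F(C)$ as
\[(\monfctmul{F}{C}{C}^{-1} \ot F(C)) \circ (F(\br) \ot F(C)) \circ (\monfctmul{F}{C}{C} \ot F(C)).\]
Conjugating further by $\monfctmul{F}{C \ot C}{C}$ and using \eqref{eq:monfct-0} turns this factor into $F(\br \ot C)$. The analogous statement holds for $F(C) \ot \br'$ via $\monfctmul{F}{C}{C \ot C}$. The associators $\asscvar'$ are translated into $F(\asscvar)$ by \eqref{eq:monfct-1}. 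So both sides of \eqref{eq:braid} for $\br'$ equal a single fixed isomorphism $\theta \colon F(C) \ot (F(C) \ot F(C)) \to F(C \ot (C \ot C))$ followed by $\theta^{-1} \circ F(\text{side of \eqref{eq:braid} for } \br) \circ \theta$. The identity therefore follows from \eqref{eq:braid} in $\cat{C}$.

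Finally, the compatibility conditions \eqref{eq:brcoalg-1}, \eqref{eq:brcoalg-2} and their unlabeled partners. The same conjugation technique applies, now combining the translations for $\Delta'$, $\eps'$ and $\br'$. For \eqref{eq:brcoalg-1}, combine \eqref{eq:monfct-2-l}, \eqref{eq:monfct-2-r} with the naturality of $\monfctmulvar{F}$; both sides become $F(\ruc{C} \circ (C \ot \eps)) \circ \monfctmul{F}{C}{C}$. The main obstacle is bookkeeping in \eqref{eq:brcoalg-2}, where objects of mixed bracketing appear. There I plan to fix the canonical comparison isomorphisms $F(C) \ot (F(C) \ot F(C)) \to F(C \ot (C \ot C))$ and $(F(C) \ot F(C)) \ot F(C) \to F((C \ot C) \ot C)$, built from $\monfctmulvar{F}$. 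Then I check, once and for all, that each elementary piece ($\Delta' \ot F(C)$, $F(C) \ot \br'$, $\asscvar'$, and so on) corresponds under these comparisons to $F$ of the matching piece in $\cat{C}$. With these lemmas in hand, every axiom follows mechanically by applying $F$ to the corresponding axiom in $\cat{C}$.
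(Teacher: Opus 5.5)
Your proposal is correct and follows essentially the paper's route: each axiom in $\cat{D}$ is reduced to $F$ applied to the corresponding axiom in $\cat{C}$ via \eqref{eq:monfct-0}, \eqref{eq:monfct-1}, \eqref{eq:monfct-2-l}, \eqref{eq:monfct-2-r}. The only differences are that you verify the coalgebra and braided-object axioms by hand where the paper cites Bulacu et al.\ and Kassel, and that you package the bookkeeping into comparison isomorphisms rather than rewriting step by step.

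One small slip: both sides of \eqref{eq:braid} run from $(F(C) \ot F(C)) \ot F(C)$ to $F(C) \ot (F(C) \ot F(C))$, so a single $\theta$ with $\theta^{-1} \circ F(\cdot) \circ \theta$ is not well-typed there. The conjugation should read $\psi_R^{-1} \circ F(\cdot) \circ \psi_L$, where $\psi_L = (\varphi_2)_{C \ot C, C} \circ ((\varphi_2)_{C,C} \ot F(C))$ and $\psi_R = (\varphi_2)_{C, C \ot C} \circ (F(C) \ot (\varphi_2)_{C,C})$. These are exactly the two comparisons you introduce later anyway.
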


\begin{proof}
	Since $\coalg{C}$ is a coalgebra in $\cat{C}$ and since $(F, \monfctmulvar{F}^{-1}, \monfctuni{F}^{-1})$ is an opmonoidal functor $\fct{C}{D}$, it follows from 
    \cite[Proposition 2.16]{bulacu-caenepeel-panaite-van-oystaeyen} that $\coalgv{F(C)}$ is a coalgebra in $\cat{D}$.
	On the other hand, since $\brobj{C}$ is a braided object in $\cat{C}$ and since $\monfct{F}$ is a monoidal functor, it follows from \cite[Lemma XIII.3.2]{kassel} that $\brobjv{F(C)}$ is a braided object in $\cat{D}$.
	We prove that the quadruple $\brcoalgv{F(C)}$ satisfies (\ref{eq:brcoalg-1}) and (\ref{eq:brcoalg-2}); the other equalities can be proven dually.
	We omit the symbol $\circ$.
    The equality (\ref{eq:brcoalg-1}) is rewritten as
	\[\lucv{F(C)} (\monfctuni{F}^{-1} \ot F(C)) (F(\eps) \ot F(C)) \monfctmul{F}{C}{C}^{-1} F(\br) \monfctmul{F}{C}{C} = \rucv{F(C)} (F(C) \ot \monfctuni{F}^{-1}) (F(C) \ot F(\eps)).\]
	Since $F(C) \ot F(\eps) = \monfctmul{F}{C}{\unit}^{-1} F(C \ot \eps) \monfctmul{F}{C}{C}$ by (\ref{eq:monfct-0}), it suffices to prove that
    \[\lucv{F(C)} (\monfctuni{F}^{-1} \ot F(C)) (F(\eps) \ot F(C)) \monfctmul{F}{C}{C}^{-1} F(\br) = \rucv{F(C)} (F(C) \ot \monfctuni{F}^{-1}) \monfctmul{F}{C}{\unit}^{-1} F(C \ot \eps),\]
	which using $(F(\eps) \ot F(C)) \monfctmul{F}{C}{C}^{-1} = \monfctmul{F}{\unit}{C}^{-1} F(\eps \ot C)$ (which again follows from (\ref{eq:monfct-0})) is rewritten as
	\[\lucv{F(C)} (\monfctuni{F}^{-1} \ot F(C)) \monfctmul{F}{\unit}{C}^{-1} F(\eps \ot C) F(\br) = \rucv{F(C)} (F(C) \ot \monfctuni{F}^{-1}) \monfctmul{F}{C}{\unit}^{-1} F(C \ot \eps).\]
	Since $\lucv{F(C)} = F(\luc{C}) \monfctmul{F}{\unit}{C} (\monfctuni{F} \ot F(C))$ by (\ref{eq:monfct-2-l}) and since $\rucv{F(C)} = F(\ruc{C}) \monfctmul{F}{C}{\unit} (F(C) \ot \monfctuni{C})$ by (\ref{eq:monfct-2-r}), this equality is rewritten as $F(\luc{C}) F(\eps \ot C) F(\br) = F(\ruc{C}) F(C \ot \eps)$, which follows from (\ref{eq:brcoalg-1}) for the braided coalgebra $\brcoalg{C}$.
    On the other hand, the equality (\ref{eq:brcoalg-2}) is rewritten as
    \begin{align*}
        & \asscv{F(C)}{F(C)}{F(C)} (\monfctmul{F}{C}{C}^{-1} \ot F(C)) (F(\Delta) \ot F(C)) \monfctmul{F}{C}{C}^{-1} F(\br) \monfctmul{F}{C}{C} \\
        & \qquad = (F(C) \ot \monfctmul{F}{C}{C}^{-1}) (F(C) \ot F(\br)) (F(C) \ot \monfctmul{F}{C}{C}) \asscv{F(C)}{F(C)}{F(C)} (\monfctmul{F}{C}{C}^{-1} \ot F(C)) \\
        & \qquad \phantom{{} = {}} (F(\br) \ot F(C)) (\monfctmul{F}{C}{C} \ot F(C)) \asscvinv{F(C)}{F(C)}{F(C)} (F(C) \ot \monfctmul{F}{C}{C}^{-1}) (F(C) \ot F(\Delta)).
    \end{align*}
    Since $F(C) \ot F(\Delta) = \monfctmul{F}{C}{C \ot C}^{-1} F(C \ot \Delta) \monfctmul{F}{C}{C}$ by (\ref{eq:monfct-0}), it suffices to prove that
    \begin{align*}
        & \asscv{F(C)}{F(C)}{F(C)} (\monfctmul{F}{C}{C}^{-1} \ot F(C)) (F(\Delta) \ot F(C)) \monfctmul{F}{C}{C}^{-1} F(\br) \\
        & \qquad = (F(C) \ot \monfctmul{F}{C}{C}^{-1}) (F(C) \ot F(\br)) (F(C) \ot \monfctmul{F}{C}{C}) \asscv{F(C)}{F(C)}{F(C)} (\monfctmul{F}{C}{C}^{-1} \ot F(C)) \\
        & \qquad \phantom{{} = {}} (F(\br) \ot F(C)) (\monfctmul{F}{C}{C} \ot F(C)) \asscvinv{F(C)}{F(C)}{F(C)} (F(C) \ot \monfctmul{F}{C}{C}^{-1}) \monfctmul{F}{C}{C \ot C}^{-1} F(C \ot \Delta),
    \end{align*}
    which using $(F(\Delta) \ot F(C)) \monfctmul{F}{C}{C}^{-1} = \monfctmul{F}{C \ot C}{C}^{-1} F(\Delta \ot C)$, $F(\br) \ot F(C) = \monfctmul{F}{C \ot C}{C}^{-1} F(\br \ot C) \monfctmul{F}{C \ot C}{C}$, and $F(C) \ot F(\br) = \monfctmul{F}{C}{C \ot C}^{-1} F(C \ot \br) \monfctmul{F}{C}{C \ot C}$ (which again follow from (\ref{eq:monfct-0})), is rewritten as
    \begin{align*}
        & \asscv{F(C)}{F(C)}{F(C)} (\monfctmul{F}{C}{C}^{-1} \ot F(C)) \monfctmul{F}{C \ot C}{C}^{-1} F(\Delta \ot C) F(\br) \\
        & \qquad = (F(C) \ot \monfctmul{F}{C}{C}^{-1}) \monfctmul{F}{C}{C \ot C}^{-1} F(C \ot \br) \monfctmul{F}{C}{C \ot C} (F(C) \ot \monfctmul{F}{C}{C}) \\
        & \qquad \phantom{{} = {}} \asscv{F(C)}{F(C)}{F(C)} (\monfctmul{F}{C}{C}^{-1} \ot F(C)) \monfctmul{F}{C \ot C}{C}^{-1} F(\br \ot C) \monfctmul{F}{C \ot C}{C} \\
        & \qquad \phantom{{} = {}} (\monfctmul{F}{C}{C} \ot F(C)) \asscvinv{F(C)}{F(C)}{F(C)} (F(C) \ot \monfctmul{F}{C}{C}^{-1}) \monfctmul{F}{C}{C \ot C}^{-1} F(C \ot \Delta).
    \end{align*}
    On the other hand, since (\ref{eq:monfct-1}) implies that
    \[\asscv{F(C)}{F(C)}{F(C)} (\monfctmul{F}{C}{C}^{-1} \ot F(C)) \monfctmul{F}{C \ot C}{C}^{-1} = (F(C) \ot \monfctmul{F}{C}{C}^{-1}) \monfctmul{F}{C}{C \ot C}^{-1} F(\assc{C}{C}{C}),\]
    it suffices to prove that
    \begin{equation}
        \label{eq:prop}
        \begin{aligned}
        & F(\assc{C}{C}{C}) F(\Delta \ot C) F(\br) \\
        & \qquad = F(C \ot \br) \monfctmul{F}{C}{C \ot C} (F(C) \ot \monfctmul{F}{C}{C}) \asscv{F(C)}{F(C)}{F(C)} \\
        & \qquad \phantom{{} = {}} (\monfctmul{F}{C}{C}^{-1} \ot F(C)) \monfctmul{F}{C \ot C}{C}^{-1} F(\br \ot C) \monfctmul{F}{C \ot C}{C} \\
        & \qquad \phantom{{} = {}} (\monfctmul{F}{C}{C} \ot F(C)) \asscvinv{F(C)}{F(C)}{F(C)} (F(C) \ot \monfctmul{F}{C}{C}^{-1}) \monfctmul{F}{C}{C \ot C}^{-1} F(C \ot \Delta).
        \end{aligned}
    \end{equation}
    Using (\ref{eq:monfct-1}) again, we can deduce the following equalities:
    \begin{align*}
        \monfctmul{F}{C}{C \ot C} (F(C) \ot \monfctmul{F}{C}{C}) \asscv{F(C)}{F(C)}{F(C)} 
        &= F(\assc{C}{C}{C}) \monfctmul{F}{C \ot C}{C} (\monfctmul{F}{C}{C} \ot F(C)); \\
        \monfctmul{F}{C \ot C}{C} (\monfctmul{F}{C}{C} \ot F(C)) \asscvinv{F(C)}{F(C)}{F(C)} 
        &= F(\assc{C}{C}{C})^{-1} \monfctmul{F}{C}{C \ot C} (F(C) \ot \monfctmul{F}{C}{C}).
    \end{align*}
    By substituting them, the right hand side of (\ref{eq:prop}) is simplified as
    \begin{align*}
        & F(C \ot \br) F(\assc{C}{C}{C}) \monfctmul{F}{C \ot C}{C} (\monfctmul{F}{C}{C} \ot F(C)) (\monfctmul{F}{C}{C}^{-1} \ot F(C)) \monfctmul{F}{C \ot C}{C}^{-1} \\
        & \quad F(\br \ot C) F(\assc{C}{C}{C})^{-1} \monfctmul{F}{C}{C \ot C} (F(C) \ot \monfctmul{F}{C}{C}) (F(C) \ot \monfctmul{F}{C}{C}^{-1}) \monfctmul{F}{C}{C \ot C}^{-1} F(C \ot \Delta).
    \end{align*}
    Therefore, (\ref{eq:prop}) is rewritten as 
    $F(\assc{C}{C}{C}) F(\Delta \ot C) F(\br) = F(C \ot \br) F(\assc{C}{C}{C}) F(\br \ot C) F(\assc{C}{C}{C}^{-1}) F(C \ot \Delta)$, which follows from (\ref{eq:brcoalg-2}) for the braided coalgebra $\brcoalg{C}$.
    This completes the proof.
\end{proof}

Let $\cat{C}$ be a monoidal category.
The (large) category $\EndCat{C}$ of endofunctors of $\cat{C}$ admits a strict monoidal structure $(\ordot, \id{\cat{C}})$ such that $F \ot G \coloneqq F \circ G$ and $\eta \ot \theta \coloneqq \eta \star \theta$ for all functors $\fct[F, F', G, G']{C}{C}$ and all natural transformations $\func[\eta]{F}{G}$ and $\func[\theta]{F'}{G'}$.
The assignment $X \mapsto X \ot (-)$ induces a monoidal functor $\func[\monfct{\imath}]{\cat{C}}{\EndCat{C}}$, where $\monfctmul{\imath}{X}{Y} = \assc{X}{Y}{-}^{-1}$ for all $X, Y \in \cat{C}$ and $\monfctuni{\imath} = \lucvar^{-1}$ (see \cite[Proposition 1.28]{bulacu-caenepeel-panaite-van-oystaeyen}).
Combining Propositions~\ref{prop:brcoalg-to-ABSO} and \ref{prop:brcoalg-monfct} in this situation, we obtain a (strict) monoidal {\ABSO} from a braided coalgebra in a possibly non-strict monoidal category $\cat{C}$.

\begin{prop}
    \label{prop:nonstr-augbrobj}
	Let $\cat{C}$ be a monoidal category, and $\brobj{C} = \brcoalg{C}$ a braided coalgebra in $\cat{C}$.
	Then, there exists a unique strict monoidal {\ABSO} $\wTcalC$ in $\EndCat{C}$ which satisfies the following equalities\emph{:}
    \begin{equation}
        \label{eq:wTcalC}
        \begin{alignedat}{2}
            \wTcalC[1] &= C \ot (-); 
            & \qquad \wTcalC[](\cofacevar) &= \lucvar \circ (\eps \ot (-)); \\
            \wTcalC[](\codegevar) &= \assc{C}{C}{-} \circ (\Delta \ot (-)); 
            & \qquad \wTcalC[](\copermvar) &= \assc{C}{C}{-} \circ (\br \ot (-)) \circ \assc{C}{C}{-}^{-1}.
        \end{alignedat}
    \end{equation}
\end{prop}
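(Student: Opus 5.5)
The plan is to combine Propositions~\ref{prop:brcoalg-monfct} and \ref{prop:brcoalg-to-ABSO}, using the monoidal functor $\func[\monfct{\imath}]{\cat{C}}{\EndCat{C}}$ given by $X \mapsto X \ot (-)$, with $\monfctmul{\imath}{X}{Y} = \assc{X}{Y}{-}^{-1}$ and $\monfctuni{\imath} = \lucvar^{-1}$.

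First, apply Proposition~\ref{prop:brcoalg-monfct} to $\imath$ and the braided coalgebra $\brcoalg{C}$. This yields a braided coalgebra $(\imath(C), \Delta', \eps', \br')$ in the strict monoidal category $\EndCat{C}$, where $\imath(C) = C \ot (-)$. Unwinding the formulas of Proposition~\ref{prop:brcoalg-monfct}, I expect to get
\[\Delta' = \monfctmul{\imath}{C}{C}^{-1} \circ \imath(\Delta) = \assc{C}{C}{-} \circ (\Delta \ot (-)), \qquad \eps' = \monfctuni{\imath}^{-1} \circ \imath(\eps) = \lucvar \circ (\eps \ot (-)),\]
and
\[\br' = \assc{C}{C}{-} \circ (\br \ot (-)) \circ \assc{C}{C}{-}^{-1}.\]
These are exactly the right-hand sides in (\ref{eq:wTcalC}).

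The one point to check carefully is the direction and variance conventions. We need that $\imath(f) = f \ot (-)$, viewed as a natural transformation $X \ot (-) \to Y \ot (-)$, matches the tensor $\eta \star \theta$ in $\EndCat{C}$. We also need that the coherence data of $\imath$ are really $\assc{X}{Y}{-}^{-1}\colon X \ot (Y \ot (-)) \to (X \ot Y) \ot (-)$ and $\lucvar^{-1}\colon \id{\cat{C}} \to \unit \ot (-)$. This matching is the step I expect to need the most attention, but it is bookkeeping: once these identifications are recorded, the formulas follow directly. The monoidal-functor axioms for $\imath$ are cited from \cite[Proposition 1.28]{bulacu-caenepeel-panaite-van-oystaeyen}, so they need no proof here.

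Second, since $\EndCat{C}$ is strict (with $F \ot G = F \circ G$ and unit $\id{\cat{C}}$), Proposition~\ref{prop:brcoalg-to-ABSO} applies to $(C \ot (-), \Delta', \eps', \br')$. It gives a unique strict monoidal functor $\func[\wTcalC]{\op{\augbr}}{\EndCat{C}}$ with $\wTcalC[1] = C \ot (-)$, $\wTcalC[](\cofacevar) = \eps'$, $\wTcalC[](\codegevar) = \Delta'$ and $\wTcalC[](\copermvar) = \br'$. These are precisely the equalities (\ref{eq:wTcalC}).

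Uniqueness is the uniqueness clause of Proposition~\ref{prop:brcoalg-to-ABSO}. A strict monoidal augmented braided simplicial object is determined by its values on $\gene{1}$, $\cofacevar$, $\codegevar$ and $\copermvar$, and (\ref{eq:wTcalC}) prescribes all four. One size issue deserves a remark: $\EndCat{C}$ is large, but the proof of Proposition~\ref{prop:brcoalg-to-ABSO} only uses the strict monoidal structure and the presentation of $\augbr$, so it applies verbatim.
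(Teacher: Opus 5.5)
Your proposal is correct and is essentially the paper's proof. The paper also transports $\brcoalg{C}$ along the monoidal functor $\imath$ via Proposition~\ref{prop:brcoalg-monfct}, unwinds $\Delta'$, $\eps'$, $\br'$ exactly as you do, and applies Proposition~\ref{prop:brcoalg-to-ABSO} in the strict monoidal category $\EndCat{C}$. Your explicit checks of the direction of $\monfctmul{\imath}{X}{Y}$ and $\monfctuni{\imath}$, your appeal to the uniqueness clause, and your size remark are details the paper leaves implicit.
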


\begin{proof}
    Applying Proposition~\ref{prop:brcoalg-monfct} to the monoidal functor $\func[\imath]{\cat{C}}{\EndCat{C}}$ gives rise to a braided coalgebra $\brcoalgv{\imath(C)}$ in $\EndCat{C}$, where $\imath(C) = C \ot (-)$, and the morphisms $\Delta'$, $\eps'$, and $\br'$ are defined as follows: 
    \begin{align*}
		\Delta' &= \monfctmul{\imath}{C}{C}^{-1} \circ \imath(\Delta) 
        = \assc{C}{C}{-} \circ (\Delta \ot (-)); \\
        \eps' &= \monfctuni{\imath}^{-1} \circ \imath(\eps) 
        = \lucvar \circ (\eps \ot (-)); \\
		\br' &= \monfctmul{\imath}{C}{C}^{-1} \circ \imath(\br) \circ \monfctmul{\imath}{C}{C} 
        = \assc{C}{C}{-} \circ (\br \ot (-)) \circ \assc{C}{C}{-}^{-1}.
	\end{align*}
    Using Proposition~\ref{prop:brcoalg-to-ABSO}, we can construct $\wTcalC$ from this braided coalgebra, as desired.
\end{proof}

\begin{rem}
    \label{rem:strict}
    Let $\cat{C}$ be a monoidal category, and $\brobj{C}$ a braided coalgebra in $\cat{C}$.
    \begin{enumerate}
        \item When $\cat{C}$ is strict, Proposition~\ref{prop:nonstr-augbrobj} yields nothing new.
        Indeed, the monoidal functor $\func[\imath]{\cat{C}}{\EndCat{C}}$ is strict in this case, and 
        $\wTcalC$ is just the composition $\op{\augbr} \xrightarrow{\sobj{X}} \cat{C} \longxrightarrow{\imath} \EndCat{C}$, where $\sobj{X}$ is constructed from $\brobj{C}$ as in Proposition~\ref{prop:brcoalg-to-ABSO}.
        \item \label{enum:factor} The 
        functor $\func[\wTcalC]{\op{\augbr}}{\EndCat{C}}$ factors through the evident functor $\func{\op{\augbr}}{\op{\augsymm}}$ of Remark~\ref{rem:augsymm}~\ref{enum:symmcat} if and only if $\wTcalC[](\copermvar)^2 = \id{}$ if and only if $\br^2 = \id{}$.
    \end{enumerate}
\end{rem}

\subsection{Braided cochain complex and braided cohomology}
\label{subsec:3.2}

In this subsection, we define the braided cochain complex and the braided cohomology of a braided coalgebra in a $\Bbbk$-linear monoidal category, using the associated strict monoidal {\ABSO} constructed in Proposition~\ref{prop:nonstr-augbrobj}.

\begin{nota}
    \label{nota:V}
    We let $\cat{V}$ denote the category of $\Bbbk$-modules.
\end{nota}

\begin{defn}
    An \term{{\ABcosSO} in $\cat{V}$} is a functor $\func{\augbrsemi}{\cat{V}}$.
\end{defn}

\begin{nota}
    An {\ABcosSO} in $\cat{V}$ is typically denoted by $\cosobj{K}$ or $K$.
    In these cases, we write $K^n$ for its value on $\gene{n}$ for all $n \ge 0$.
\end{nota}

Let $\cosobj{K}$ be an {\ABcosSO} in $\cat{V}$.
The collection $\famnat{\Kcpx[n]}$ of objects in $\cat{V}$ gives rise to a cochain complex in $\cat{V}$ 
by letting the differential $\func[\codiff[n]]{\Kcpx[n]}{K^{n + 2}}$ be the alternating sum $\displaystyle\sum_{j = 0}^{n + 1} (-1)^j K(\coface{j}{n + 1})$ for all $n \ge 0$.
We will write $\Kcpx$ for this cochain complex in $\cat{V}$.
We refer to Lurie \cite[\ktag{Construction}{00QA} and \ktag{Remark}{04RW}]{lurie} for a dual construction.

\begin{defn}
    \label{defn:KB}
    Let $\cosobj{K}$ be an {\ABcosSO} in $\cat{V}$.
    For all $n \ge 0$, we define a $\Bbbk$-submodule $\KBvar[n]{K}$ of $\Kcpx[n]$ by $\KBvar[0]{K} \coloneqq K^1$ and
    \begin{equation*}
        \KBvar[n]{K} \coloneqq \{\varphi \in \Kcpx[n] \mid K(\coperm{i}{n})(\varphi) = - \varphi \text{ for all } 1 \le i \le n\} \quad \text{for all $n > 0$.}
    \end{equation*}
\end{defn}

\begin{rem}
    \label{rem:sign}
    Let $\cosobj{K}$ be an {\ABcosSO} in $\cat{V}$, and $n \ge 0$ a fixed integer.
    We can define a representation of the braid group $\brgrp$ on the $\Bbbk$-module $\Kcpx[n]$ as the composition
    \[\brgrp \xrightarrow{\rho_{n + 1}} \Aut{\augbrsemi}{\gene{n + 1}} \overset{\cosobj{K}}{\longrightarrow} \Aut{\cat{V}}{\Kcpx[n]},\]
    where $\rho_{n + 1}$ is defined as in Remark~\ref{rem:augbr}~\ref{enum:rem-augbr-2}.
    Let $\func[\sgn]{\brgrp}{\Bbbk^\times}$ denote the unique group representation such that $\sgn[\brperm{i}{n}] = -1$ for all $1 \le i \le n$; said differently, $\sgn$ is just the composition of the ordinary sign representation of the symmetric group $\symmgrp$ and the quotient map $\func{\brgrp}{\symmgrp}$.
    By considering their tensor product, we see that $\brgrp$ acts on the $\Bbbk$-module $\Kcpx[n]$ as $(g, \varphi) \mapsto g {\myspace} \varphi \coloneqq \sgn[g] (K \circ \rho_{n + 1})(g)(\varphi)$.
    We claim that $\KBvar[n]{K}$ coincides with the $\Bbbk$-submodule $(\Kcpx[n])^{\brgrp}$ of $\brgrp$-invariants:
    \begin{equation}
        \label{eq:invar}
        \KBvar[n]{K} = (\Kcpx[n])^{\brgrp} \coloneqq \{\varphi \in \Kcpx[n] \mid g {\myspace} \varphi = \varphi \text{ for all } g \in \brgrp\}.
    \end{equation}
    Since the group $\brgrp$ is generated by $\{\brperm{i}{n} \mid 1 \le i \le n\}$, it follows that
    \[(\Kcpx[n])^{\brgrp} = \{\varphi \in \Kcpx[n] \mid \brperm{i}{n} \varphi = \varphi \text{ for all } 1 \le i \le n\}.\]
    The desired equality (\ref{eq:invar}) follows since $\brperm{i}{n} \varphi = \sgn[\brperm{i}{n}] (K \circ \rho_{n + 1})(\brperm{i}{n})(\varphi) = - K(\coperm{i}{n})(\varphi)$ for all $1 \le i \le n$.
\end{rem}

\begin{prop}
    \label{prop:KB}
    Let $\cosobj{K}$ be an {\ABcosSO} in $\cat{V}$.
    Then, the collection $\famnat{\KBvar[n]{K}}$ of $\Bbbk$-modules determines a subcomplex $\KBvar{K}$ of $\Kcpx$.
\end{prop}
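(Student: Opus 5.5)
We must show that the differential $\codiff[n] = \sum_{j=0}^{n+1} (-1)^j K(\coface{j}{n+1})$ maps $\KBvar[n]{K}$ into $\KBvar[n+1]{K}$. For $n = 0$ the condition on $\KBvar[0]{K} = K^1$ is empty, so there we only need that $\codiff[0](\varphi)$ is anti-invariant under $K(\coperm{1}{1})$. In general, fix $\varphi \in \KBvar[n]{K}$ and $1 \le i \le n+1$. The goal is to show $K(\coperm{i}{n+1})(\codiff[n](\varphi)) = -\codiff[n](\varphi)$. Since $K$ is covariant on $\augbrsemi$, we have $K(\coperm{i}{n+1}) \circ K(\coface{j}{n+1}) = K(\coperm{i}{n+1} \circ \coface{j}{n+1})$. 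We therefore use condition~\ref{enum:coaugbr-3} of Definition~\ref{defn:augbr}, which holds in $\augbrsemi$, at level $n+1$. This rewrites each term of $K(\coperm{i}{n+1})\codiff[n](\varphi)$.

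We split the sum over $j \in \{0, \dots, n+1\}$ into four cases.
\begin{itemize}
\item For $j > i$ we get $K(\coface{j}{n+1}) K(\coperm{i}{n})(\varphi) = -K(\coface{j}{n+1})(\varphi)$, using that $\varphi \in \KBvar[n]{K}$.
\item For $j < i-1$ we get $K(\coface{j}{n+1}) K(\coperm{i-1}{n})(\varphi) = -K(\coface{j}{n+1})(\varphi)$, again by anti-invariance, since $1 \le i-1 \le n$.
\item For $j = i$ the term becomes $(-1)^i K(\coface{i-1}{n+1})(\varphi)$.
\item For $j = i-1$ the term becomes $(-1)^{i-1} K(\coface{i}{n+1})(\varphi)$.
\end{itemize}
The two middle terms of the original sum are $(-1)^{i-1}K(\coface{i-1}{n+1})(\varphi) + (-1)^i K(\coface{i}{n+1})(\varphi)$. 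After applying $K(\coperm{i}{n+1})$ they are swapped with the signs preserved, so their sum is exactly the negative of the original pair. Adding the four cases gives $K(\coperm{i}{n+1})\codiff[n](\varphi) = -\codiff[n](\varphi)$.

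For $n = 0$ (and $i = 1$) the first two cases are empty, so the computation uses no hypothesis on $\varphi$, consistent with $\KBvar[0]{K} = K^1$.

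Since the $\KBvar[n]{K}$ are $\Bbbk$-submodules and $\codiff$ is linear, this shows that they form a subcomplex.

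The only delicate step is the index bookkeeping when applying condition~\ref{enum:coaugbr-3} at level $n+1$. In particular we must check that the indices $i$ and $i-1$ of the resulting $\coperm{}{n}$ lie in $[1, n]$ exactly when needed, and that the boundary values $j = 0$ and $j = n+1$ fall into the correct cases.
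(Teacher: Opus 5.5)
Your proposal is correct and follows essentially the same route as the paper. Both arguments apply condition~\ref{enum:coaugbr-3} of Definition~\ref{defn:augbr} termwise to $K(\coperm{i}{n+1})(\codiff[n](\varphi))$, split the sum into the cases $j<i-1$, $j=i-1$, $j=i$, $j>i$, use anti-invariance of $\varphi$ on the outer terms, and observe that the two middle terms swap; the paper just indexes by $\varphi \in \KBvar[n-1]{K}$, and your degree-zero remark matches how the paper's computation handles that case (the outer sums are empty).
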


\begin{proof}
    Take $n > 0$ and $\varphi \in \KBvar[n - 1]{K}$.
    For all $1 \le i \le n$, we can compute
    \begin{align*}
        & K(\coperm{i}{n})(\codiff[n - 1](\varphi)) 
        = K(\coperm{i}{n})\biggl( \, \sum_{j = 0}^n (-1)^j K(\coface{j}{n})(\varphi) \biggr) 
        = \sum_{j = 0}^n (-1)^j K(\coperm{i}{n} \circ \coface{j}{n})(\varphi) \\
        & \quad = \sum_{j < i - 1} (-1)^j K(\coperm{i}{n} \circ \coface{j}{n})(\varphi) 
        + (-1)^{i - 1} K(\coperm{i}{n} \circ \coface{i - 1}{n})(\varphi) 
        + (-1)^i K(\coperm{i}{n} \circ \coface{i}{n})(\varphi) 
        + \sum_{j > i} (-1)^j K(\coperm{i}{n} \circ \coface{j}{n})(\varphi) \\
        & \quad = \sum_{j < i - 1} (-1)^j K(\coface{j}{n} \circ \coperm{i - 1}{n - 1})(\varphi) 
        + (-1)^{i - 1} K(\coface{i}{n})(\varphi) 
        + (-1)^i K(\coface{i - 1}{n})(\varphi) 
        + \sum_{j > i} (-1)^j K(\coface{j}{n} \circ \coperm{i}{n - 1})(\varphi) \\
        & \quad = \sum_{j < i - 1} (-1)^j K(\coface{j}{n})(- \varphi) 
        + (-1)^{i - 1} K(\coface{i}{n})(\varphi) 
        + (-1)^i K(\coface{i - 1}{n})(\varphi) 
        + \sum_{j > i} (-1)^j K(\coface{j}{n})(- \varphi) \\
        & \quad = - \sum_{j = 0}^n (-1)^j K(\coface{j}{n})(\varphi) 
        = - \codiff[n - 1](\varphi),
    \end{align*}
    where we used condition~\ref{enum:coaugbr-3} of Definition~\ref{defn:augbr} in the fourth equality, and the fifth equality follows from the assumption.
    This implies $\codiff[n - 1](\varphi) \in \KBvar[n]{K}$.
    Consequently, we have $\codiff[n - 1](\KBvar[n - 1]{K}) \subset \KBvar[n]{K}$.
\end{proof}

This construction $\cosobj{K} \mapsto \KBvar{K}$ is functorial in $K$; the next result is sufficient for our purposes.
Given a pair of {\ABcosSO}s $\cosobj{K}$ and $\cosobj{L}$ in $\cat{V}$, a natural transformation $\func{\cosobj{K}}{\cosobj{L}}$ is denoted by $\cosf$, and we write $\fn$ for its component $\func{K^n}{L^n}$ at $\gene{n}$ for all $n \ge 0$.

\begin{lem}
    \label{lem:KB}
    Let $\cosobj{K}$ and $\cosobj{L}$ be {\ABcosSO}s in $\cat{V}$, and $\func[\cosf]{\cosobj{K}}{\cosobj{L}}$ a natural transformation.
    Then, the collection $\famnat{\func[\fcpx[n]]{\Kcpx[n]}{\Lcpx[n]}}$ of $\Bbbk$-linear maps induces a chain map $\func{\KBvar{K}}{\KBvar{L}}$, which is an isomorphism 
    if $\cosf$ is a natural isomorphism.
\end{lem}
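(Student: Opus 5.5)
The plan is to check, in order, that the components of $\cosf$ preserve the invariant submodules, that they commute with the differentials, and that invertibility passes to the restriction. All three facts come directly from the naturality of $\cosf$ with respect to morphisms of $\augbrsemi$.

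\emph{Preservation of the submodules.} In degree $0$ we have $\KBvar[0]{K} = K^1$ and $\KBvar[0]{L} = L^1$, so there is nothing to prove. For $n > 0$, take $\varphi \in \KBvar[n]{K}$ and $1 \le i \le n$. Naturality of $\cosf$ at the morphism $\coperm{i}{n}$ gives
\[
L(\coperm{i}{n})(\fcpx[n](\varphi)) = \fcpx[n](K(\coperm{i}{n})(\varphi)) = \fcpx[n](-\varphi) = -\fcpx[n](\varphi),
\]
where the last equality uses $\Bbbk$-linearity of $\fcpx[n]$. Hence $\fcpx[n](\varphi) \in \KBvar[n]{L}$.

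\emph{Compatibility with the differentials.} Since each $\fcpx[n]$ is $\Bbbk$-linear, naturality at each coface map $\coface{j}{n}$ gives
\[
\fcpx[n + 1] \circ \codiff[n] = \sum_j (-1)^j \fcpx[n + 1] \circ K(\coface{j}{n + 1}) = \sum_j (-1)^j L(\coface{j}{n + 1}) \circ \fcpx[n] = \codiffv[n] \circ \fcpx[n].
\]
So the collection $\famnat{\fcpx[n]}$ is already a chain map $\Kcpx \to \Lcpx$. Restricting it to the subcomplexes of Proposition~\ref{prop:KB} then gives a chain map $\KBvar{K} \to \KBvar{L}$.

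\emph{Invertibility.} Suppose $\cosf$ is a natural isomorphism. Then its inverse $\cosobj{(f^{-1})}$ is again a natural transformation $\cosobj{L} \to \cosobj{K}$. By the first two steps it induces a chain map $\KBvar{L} \to \KBvar{K}$, and the two composites are restrictions of identities. Therefore the induced map is an isomorphism of cochain complexes.

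There is no real obstacle here. The only points needing care are the separate treatment of degree $0$, where there is no invariance condition, and the fact that linearity of the components is used both for the sign in the first step and for the alternating sums in the second.
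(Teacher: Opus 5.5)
Your proof is correct and matches the paper's argument in the first two steps: naturality at the cofaces makes $\famnat{\fcpx[n]}$ a chain map $\Kcpx \to \Lcpx$, and naturality at $\coperm{i}{n}$ gives $\fcpx[n](\KBvar[n]{K}) \subset \KBvar[n]{L}$. The only difference is the invertibility step: the paper checks injectivity and surjectivity of the restriction directly, lifting $\psi \in \KBvar[n]{L}$ to some $\varphi$ and using injectivity of $\fcpx[n]$ to see that $\varphi \in \KBvar[n]{K}$, whereas you apply the preservation step to the inverse natural transformation, which is equivalent and slightly cleaner.
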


\begin{proof}
    We first claim that the collection $\fcpx = \famnat{\fcpx[n]}$ 
    determines a chain map $\func{\Kcpx}{\Lcpx}$.
    Write $\codiffvar{K}$, and $\codiffvar{L}$ for the differentials of $\Kcpx$, and $\Lcpx$, respectively.
    It follows from the naturality of $\cosf$ that
    \[\fcpx[n] \circ \codiffvar{K}[n - 1] 
    = \sum_{j = 0}^n (-1)^j (\fcpx[n] \circ K(\coface{j}{n})) 
    = \sum_{j = 0}^n (-1)^j (L(\coface{j}{n}) \circ \fn) 
    = \codiffvar{L}[n - 1] \circ \fn\]
    for all $n > 0$, as desired.
    Next, we claim that $\fcpx[n](\KBvar[n]{K}) \subset \KBvar[n]{L}$ for all $n \ge 0$.
    Since $\KBvar[0]{K} = K^1$ and $\KBvar[0]{L} = L^1$ by definition, we may assume that $n > 0$.
    For all $\varphi \in \KBvar[n]{K}$ and $1 \le i \le n$, it follows that $L(\coperm{i}{n}) (\fcpx[n](\varphi)) = \fcpx[n](K(\coperm{i}{n}) (\varphi)) 
    = - \fcpx[n](\varphi)$, where the first equality follows from the naturality of $\cosf$, 
    and the second equality from the assumption, and thus $\fcpx[n](\varphi) \in \KBvar[n]{L}$.
    We therefore obtain a chain map $\func{\KBvar{K}}{\KBvar{L}}$.
    Now, suppose that $F$ is fully faithful, and take $n \ge 0$.
    Since $\fcpx[n]$ is an isomorphism of $\Bbbk$-modules, its restriction $\func{\KBvar[n]{K}}{\KBvar[n]{L}}$ is injective.
    On the other hand, for all $\psi \in \KBvar[n]{L}$, the surjectivity of $\fcpx[n]$ implies that $\psi = \fcpx[n](\varphi)$ for some $\varphi \in \Kcpx[n]$, and a similar computation gives $\fcpx[n](K(\coperm{i}{n}) (\varphi)) = L(\coperm{i}{n}) (\fcpx[n](\varphi)) = L(\coperm{i}{n}) (\psi) = - \psi = \fcpx[n](- \varphi)$ for all $1 \le i \le n$, and thus $\varphi \in \KBvar[n]{K}$ by the injectivity of $\fcpx[n]$.
    It follows that the $\Bbbk$-linear map $\func{\KBvar[n]{K}}{\KBvar[n]{L}}$ is surjective as well.
    Hence, the chain map $\func{\KBvar{K}}{\KBvar{L}}$ is an isomorphism.
\end{proof}

\begin{defn}
    \label{defn:br-cpx}
    Let $\cat{C}$ be a $\Bbbk$-linear monoidal category, $\brobj{C}$ a braided coalgebra in $\cat{C}$, and $M$ an object in $\cat{C}$.
    \begin{enumerate}
        \item \label{br-cpx:ABSO} We define an {\ABSO} $\wTC$ in $\cat{C}$ to be 
        $\evunit \circ \wTcalC$.
        \item We define an {\ABcosSO} $\KC$ in $\cat{V}$ to be the composite
        \[\augbrsemi \longxrightarrow{\incl} \augbr \xrightarrow{\wTC} \opC \xrightarrow{\linhom{C}{-}{M}} \cat{V}.\]
        \item We define the \textit{braided cochain complex $\KBC$ of $\brobj{C}$ with coefficients in $M$} to be the cochain complex $\KBvar{\KC[]}$ in $\cat{V}$, constructed from 
        $\KC$ as in Definition~\ref{defn:KB}.
        \item \label{br-cpx:br-coho} We write $\HBC$ for the cohomology of the cochain complex $\KBC$, 
        and refer to it as the \term{braided cohomology of $\brobj{C}$ with coefficients in $M$}.
    \end{enumerate}
\end{defn}

We now describe the {\ABSO} $\wTC$ explicitly.

\begin{rem}
    \label{rem:wTC}
    Let $\cat{C}$ be a $\Bbbk$-linear monoidal category, and $\brobj{C}$ a braided coalgebra in $\cat{C}$.
    For simplicity, we write $\sobjcal{X} \coloneqq \wTcalC$.
    We note that $\wTC[0] = \Xcal_0(\unit) = \id{\cat{C}}(\unit) = \unit$ and for all $n > 0$,
    \[\wTC[n] = \Xcal_n(\unit) 
    = \Xcal(\otfold[\gene{1}]{n})(\unit) 
    = ((\Xcal_1)^{\circ {\myspace} n})(\unit) 
    = \otfold{n} \ot \unit,\]
    which is isomorphic to $\otfold{n}$ via $\otfoldv{n - 1} \ot \ruc{C}$.
    We identify $\wTC$ with the {\ABSO} in $\cat{C}$ determined from the collection $\famnat{\otfold{n}}$ of objects in $\cat{C}$, together with the collections
    \[\family{\func[\face{j}{n}]{\otfoldv{n + 1}}{\otfold{n}}}{0 \le j \le n}; 
    \quad \family{\func[\dege{j}{n}]{\otfoldv{n + 1}}{\otfoldv{n + 2}}}{0 \le j \le n}; 
    \quad \family{\func[\perm{i}{n}]{\otfoldv{n + 1}}{\otfoldv{n + 1}}}{1 \le i \le n}\]
    of morphisms in $\cat{C}$, which are defined as follows:
    \begin{itemize}
        \item The morphism $\face{0}{0}$ is defined to be the composite $C \xrightarrow{\ruc{C}^{-1}} C \ot \unit \xrightarrow{\Xcal(\cofacevar)_{\unit}} \unit$.
        \item For all $n > 0$ and $0 \le j \le n$, the morphism $\face{j}{n}$ is defined to be the composite
        \[\otfoldv{n + 1} \xrightarrow{(\otfold{n} \ot \ruc{C})^{-1}} \otfoldv{n + 1} \ot \unit \xrightarrow{\Xcal(\coface{j}{n})_{\unit}} \otfold{n} \ot \unit \xrightarrow{\otfoldv{n - 1} \ot \ruc{C}} \otfold{n}.\]
        \item For all $0 \le j \le n$, the morphism $\dege{j}{n}$ is defined to be the composite
        \[\otfoldv{n + 1} \xrightarrow{(\otfold{n} \ot \ruc{C})^{-1}} \otfoldv{n + 1} \ot \unit \xrightarrow{\Xcal(\codege{j}{n})_{\unit}} \otfoldv{n + 2} \ot \unit \xrightarrow{\otfoldv{n + 1} \ot \ruc{C}} \otfoldv{n + 2}.\]
        \item For all $1 \le i \le n$, the morphism $\perm{i}{n}$ is defined to be the composite
        \[\otfoldv{n + 1} \xrightarrow{(\otfold{n} \ot \ruc{C})^{-1}} \otfoldv{n + 1} \ot \unit \xrightarrow{\Xcal(\coperm{i}{n})_{\unit}} \otfoldv{n + 1} \ot \unit \xrightarrow{\otfold{n} \ot \ruc{C}} \otfoldv{n + 1}.\]
    \end{itemize}
    We now simplify the above description of the morphism $\face{j}{n}$.
    First, the equality $\face{0}{0} = \eps$ holds since
    \begin{equation}
        \label{eq:face-0}
        \face{0}{0} 
        = \Xcal(\cofacevar)_{\unit} \circ \ruc{C}^{-1} 
        = \luc{\unit} \circ (\eps \ot \unit) \circ \ruc{C}^{-1} 
        = \luc{\unit} \circ \ruc{\unit}^{-1} \circ \eps = \eps,
    \end{equation}
    where the third equality follows from the naturality of $\rucvar$, and the fourth equality from \cite[Lemma XI.2.3]{kassel}.
    On the other hand, $\face{j}{n}$ is equal to the composite
    \[\otfoldv{n + 1} \xrightarrow{\otfold{j} \ot \eps \ot \otfoldv{n - j}} \otfold{j} \ot \unit \ot \otfoldv{n - j} \xrightarrow{\otfold{j} \ot \luc{\otfoldv{n - j}}} \otfold{n}\]
    for all $0 \le j < n$, and $\face{n}{n}$ is equal to the composite
    \[\otfoldv{n + 1} \xrightarrow{\otfold{n} \ot \eps} \otfold{n} \ot \unit \xrightarrow{\otfoldv{n - 1} \ot \ruc{C}} \otfold{n}\]
    for all $n > 0$.
    In order to prove these assertions, since
    \begin{align*}
        \Xcal(\coface{j}{n})_{\unit} 
        &= ((\Xcal_1)^{\circ {\myspace} j} \star \Xcal(\cofacevar) \star (\Xcal_1)^{\circ {\myspace} (n - j)})_{\unit} \\
        &= ((\Xcal_1)^{\circ {\myspace} j} \star (\lucvar \circ (\eps \ot (-))) \star (\Xcal_1)^{\circ {\myspace} (n - j)})_{\unit} 
        = (\otfold{j} \ot \luc{\otfoldv{n - j} \ot \unit}) \circ (\otfold{j} \ot \eps \ot \otfoldv{n - j} \ot \unit)
    \end{align*}
    for all $0 \le j \le n$, it suffices to prove the equalities
    \begin{align}
        (\otfoldv{n - 1} \ot \ruc{C}) \circ \luc{\otfold{n} \ot \unit} \circ (\eps \ot \otfold{n} \ot \unit) 
        &= \luc{\otfold{n}} \circ (\eps \ot \otfold{n}) \circ (\otfold{n} \ot \ruc{C}); \label{eq:face-1} \\
        (\otfoldv{n - 1} \ot \ruc{C}) \circ (\otfold{n} \ot \luc{\unit}) \circ (\otfold{n} \ot \eps \ot \unit) 
        &= (\otfoldv{n - 1} \ot \ruc{C}) \circ (\otfold{n} \ot \eps) \circ (\otfold{n} \ot \ruc{C}) \label{eq:face-2}
    \end{align}
    for all $n > 0$.
    By the naturality of $\lucvar$, (\ref{eq:face-1}) is verified as
    \begin{align*}
        (\otfoldv{n - 1} \ot \ruc{C}) \circ \luc{\otfold{n} \ot \unit} \circ (\eps \ot \otfold{n} \ot \unit) 
        &= \luc{\otfold{n}} \circ (\unit \ot \otfoldv{n - 1} \ot \ruc{C}) \circ (\eps \ot \otfold{n} \ot \unit) \\
        &= \luc{\otfold{n}} \circ (\eps \ot \otfoldv{n - 1} \ot \ruc{C}) 
        = \luc{\otfold{n}} \circ (\eps \ot \otfold{n}) \circ (\otfold{n} \ot \ruc{C}).
    \end{align*}
    On the other hand, (\ref{eq:face-2}) is reduced to the equality $\luc{\unit} \circ (\eps \ot \unit) = \eps \circ \ruc{C}$, which can be verified similarly to (\ref{eq:face-0}).
    The description of the morphism $\face{j}{n}$ can be summarized as follows:
    \begin{equation}
        \label{eq:face}
        \face{j}{n} = \begin{cases*}
            \eps & if $0 = j = n$, \\
            \otfold{j} \ot (\luc{\otfoldv{n - j}} \circ (\eps \ot \otfoldv{n - j})) & if $0 \le j < n$, \\
            \otfoldv{n - 1} \ot (\ruc{C} \circ (C \ot \eps)) & if $0 < j = n$.
        \end{cases*}
    \end{equation}
    The following formulas for the morphisms $\dege{j}{n}$ and $\perm{i}{n}$ can be deduced by parallel discussions:
    \begin{align}
        \label{eq:dege}
        \dege{j}{n} &= \begin{cases*}
            \otfold{j} \ot (\assc{C}{C}{\otfoldv{n - j}} \circ (\Delta \ot \otfoldv{n - j})) & if $0 \le j < n$, \\
            \otfold{n} \ot \Delta & if $0 \le j = n$;
        \end{cases*} \\
        \label{eq:perm}
        \perm{i}{n} &= \begin{cases*}
            \otfoldv{i - 1} \ot (\assc{C}{C}{\otfoldv{n - i}} \circ (\br \ot \otfoldv{n - i}) \circ \assc{C}{C}{\otfoldv{n - i}}^{-1}) & if $1 \le i < n$, \\
            \otfoldv{n - 1} \ot \br & if $1 \le i = n$.
        \end{cases*}
    \end{align}
    This implies that $\wTC$ coincides with 
    $\sobj{X}$, constructed as in Proposition~\ref{prop:brcoalg-to-ABSO}, when $\cat{C}$ is strict.
\end{rem}

We close this subsection with by relating the above construction to symmetric groups in a special case.

\begin{rem}
    \label{rem:HS}
    Let $\cat{C}$ be a $\Bbbk$-linear monoidal category, $\brobj{C}$ a braided coalgebra in $\cat{C}$, and $M$ an object in $\cat{C}$, and suppose that $\br^2 = \id{}$.
    Then, similarly to Remark~\ref{rem:strict}~\ref{enum:factor}, the functor $\func[\KC]{\augbrsemi}{\cat{V}}$ factors through the evident functor $\func{\augbrsemi}{\augsymmsemi}$, where $\augsymmsemi$ is defined as in Remark~\ref{rem:augsymm}~\ref{enum:augsymmsemi}, and thereby induces a representation $\func{\symmgrp}{\Aut{\cat{V}}{\KC[n + 1]}}$ of 
    $\symmgrp$ which makes the diagram
    \[\xymatrix{\brgrp \ar[d] \ar[rr]^-{\rho_{n + 1}} && \Aut{\cat{V}}{\KC[n + 1]} \ar[d]^-{\id{}} \\
    \symmgrp \ar @{-->} [rr] & & \Aut{\cat{V}}{\KC[n + 1]}}\]
    commutative for all $n \ge 0$.
    Arguing as in Remark~\ref{rem:sign}, $\KBC[n]$ can be regarded as the $\Bbbk$-submodule $\KC[n]^{\symmgrp}$ of $\symmgrp$-invariants.
    In this case, the cochain complex $\KBC$ and its cohomology $\HBC$ are denoted by $\KSC$ and $\HSC$, which we refer to as the \textit{symmetric cochain complex} and the \textit{symmetric cohomology of $\brobj{C}$ with coefficients in $M$}, respectively.
\end{rem}

\section{Braided Morita invariance of braided cochain complex}
\label{sec:4}

Let $A$ and $B$ be $\Bbbk$-algebras.
We write $\AMod$ and $\BMod$ for the $\Bbbk$-linear category of (left) $A$-modules and of $B$-modules (see Notation~\ref{nota:alg}~\ref{enum:AMod}).
Two $\Bbbk$-algebras $A$ and $B$ are said to be \term{Morita equivalent} if there exists a $\Bbbk$-linear equivalence $\func{\AMod}{\BMod}$.
When $A$ and $B$ are endowed with the structure of a $\Bbbk$-bialgebra (see Definition~\ref{defn:bialg}), $\AMod$ and $\BMod$ admit monoidal structures, and the $\Bbbk$-bialgebras $A$ and $B$ are said to be \term{monoidally Morita equivalent} (this terminology is borrowed from \cite[Definition 2.3]{shimizu}) if there exists a $\Bbbk$-linear monoidal equivalence $\func{\AMod}{\BMod}$, which is defined as follows:

\begin{defn}[{cf.~\cite[Definition 1.30]{bulacu-caenepeel-panaite-van-oystaeyen}}]
    \label{defn:lin-mon-eqv}
    A $\Bbbk$-linear monoidal functor $\fct[F]{C}{D}$ is said to be a \term{$\Bbbk$-linear monoidal equivalence} if there exist a $\Bbbk$-linear monoidal functor $\fct[G]{C}{D}$ as well as $\Bbbk$-linear monoidal natural isomorphisms $\func{\id{\cat{C}}}{G \circ F}$ and $\func{F \circ G}{\id{\cat{D}}}$.
\end{defn}

A \term{$\Bbbk$-linear braided monoidal equivalence} between $\Bbbk$-linear braided monoidal categories is defined in a similar way.
Two quasi-triangular $\Bbbk$-algebras $\qtb{A}$ and $\qtbv{B}$ are said to be \term{braided Morita equivalent} (see \cite[Definition 1.1]{muller-walton}) if there exists a $\Bbbk$-linear braided monoidal equivalence $\func{(\AMod, \braidtrivar)}{(\BMod, \braidtrivarv)}$ (for notation, see Subsection~\ref{subsec:4.3}).
In this section, we prove that the braided cochain complex of quasi-triangular $\Bbbk$-bialgebras (in the sense of Definition~\ref{defn:KBA}) is stable under certain $\Bbbk$-linear braided monoidal equivalences (see Theorem~\ref{thm:Morita} and Corollary~\ref{cor:Morita} for details).
We view this result as the (conditional) braided Morita invariance of the braided cochain complex of quasi-triangular $\Bbbk$-algebras.

We start with an observation in the general setting: Let $\fct[F]{C}{D}$ be a $\Bbbk$-linear monoidal equivalence, $\brobj{C}$ and $\brobjv{D}$ braided coalgebras in $\cat{C}$ and $\cat{D}$, respectively, and $M$ an object in $\cat{C}$.
We examine here the following condition:
\begin{itemize}
    \item[($*$)] There exists an isomorphism $\KC \iso \KD$ 
    for all $M \in \cat{C}$.
\end{itemize}
By Lemma~\ref{lem:KB}, this implies that $\KBC \iso \KBD$ as cochain complexes of $\Bbbk$-modules, and taking cohomology yields that $\HBC \iso \HBD$ as graded $\Bbbk$-modules, for all $M \in \cat{C}$.
Let us put $\sobjcalinj{X} \coloneqq \wTcalC \circ \incl$ and $\sobjcalinj{Y} \coloneqq \wTcalD \circ \incl$.
Take $M \in \cat{C}$ arbitrarily.
Since $F$ is fully faithful by the remark preceding 
\cite[Definition 3.5.13]{riehl}, $F$ induces an isomorphism $\linhom{C}{-}{M} \iso \linhom{D}{F(-)}{F(M)}$ by Lemma~\ref{lem:k-lin}, and pre-whiskering (see \cite[Definition 2.1.16]{johnson-yau}) with $\evunit \circ \sobjcalinj{X}$ gives an isomorphism
\[\KC = \linhom{C}{\evunit \circ \mathcal{X}'}{M} \iso \linhom{D}{F \circ \evunit \circ \mathcal{X}'}{F(M)}.\]
Since $\KD = \linhom{D}{\evunit \circ \mathcal{Y}'}{F(M)}$ by definition, condition ($*$) holds if there is an isomorphism $\evunit \circ \sobjcalinj{Y} \iso F \circ \evunit \circ \sobjcalinj{X}$.
By Lemma~\ref{lem:ev}~\ref{enum:ev-1}, we have $F \circ \evunit \circ \sobjcalinj{X} = \evunit \circ [\cat{C}, F] \circ \sobjcalinj{X}$.
On the other hand, using $\unit \iso F(\unit)$ and Lemma~\ref{lem:ev}~\ref{enum:ev-2}, we have $\evunit \circ \sobjcalinj{Y} \iso \ev{F(\unit)} \circ \sobjcalinj{Y} = \evunit \circ [F, \cat{D}] \circ \sobjcalinj{Y}$.
It thus follows that condition ($*$) holds if there is an isomorphism $[F, \cat{D}] \circ \sobjcalinj{Y} \iso [\cat{C}, F] \circ \sobjcalinj{X}$, or equivalently, if there is a collection $\famnat{\func[\vareta[n]]{\Ycal'_n \circ F}{F \circ \Xcal'_n}}$ of natural isomorphisms which satisfies (\ref{eq:mor-1}) below for all morphisms $\phi$ in $\augbrsemi$.
We will study natural transformations $\func{[F, \cat{D}] \circ \wTcalD}{[\cat{C}, F] \circ \wTcalC}$ by imposing some compatibility with respect to the (fixed) monoidal structures on $\cat{C}$, $\cat{D}$, and $F$.

\subsection{Relative morphisms between strict monoidal functors}
\label{subsec:4.1}

In this subsection, we let $\cat{I}$ denote either $\augbr$ or $\augbrsemi$, which we regard as a strict monoidal category, equipped with the monoidal structure described as in Definitions~\ref{defn:augbr} and \ref{defn:augbrsemi}.
We introduce here the notion of relative morphisms between strict monoidal {\ABbsbSO}s in strict monoidal categories of the form $\EndCat{C}$ for some category $\cat{C}$.
Then, we describe relative morphisms in terms of the evaluations of these functors at the morphisms $\cofacevar$, $\codegevar$, and $\copermvar$ in the case $\cat{I} = \augbr$ (see Proposition~\ref{prop:mor-3}).
We also obtain an intermediate result (see Proposition~\ref{prop:mor-2}), which deals with morphisms satisfying only (\ref{eq:mor-2}) and (\ref{eq:mor-3}).

\begin{defn}
	\label{defn:rel-mor}
    Let $\cat{C}$ and $\cat{D}$ be categories, and $\func[\sobjcal{X}]{\opI}{\EndCat{C}}$ and $\func[\sobjcal{Y}]{\opI}{\EndCat{D}}$ strict monoidal functors; that is, $\sobjcal{X}$ and $\sobjcal{Y}$ are strict monoidal {\ABbsbSO}s in $\EndCat{C}$ and $\EndCat{D}$, respectively.
    A \term{relative morphism} from $\sobjcal{X}$ to $\sobjcal{Y}$ is a pair $(F, \vareta)$ consisting of a functor $\fct[F]{C}{D}$ and a collection $\vareta = \famnat{\func[\vareta[n]]{\Ycal_n \circ F}{F \circ \Xcal_n}}$ of natural transformations, which satisfies the equalities
	\begin{align}
		\label{eq:mor-1} \vareta[m] \circ (\Ycal(\phi) \star F) &= (F \star \Xcal(\phi)) \circ \vareta[n]; \\
		\label{eq:mor-2} \varetazero &= \id{F}; \\
		\label{eq:mor-3} \vareta[m + n] &= (\vareta[m] \star \Xcal_n) \circ (\Ycal_m \star \vareta[n]),
	\end{align}
	where $m, n \ge 0$ and $\funcgene[\phi]{m}{n}$ is an arbitrary morphism in $\cat{I}$.
\end{defn}

Definition~\ref{defn:rel-mor} can be described in the language of $2$-category theory.
We refer to \cite{johnson-yau} for the definitions of some terminology which appear in the next remark.

\begin{rem}
	The strict monoidal category $\opI$ gives rise to a $2$-category $\opIvar$ with a single object $*$ where the $\mathrm{Hom}$-category of $1$-cells from $*$ to $*$ is equal to $\opI$ (see \cite[Examples 2.1.19 and 2.3.11]{johnson-yau}).
	We let $\biCAT$ denote the (huge) $2$-category of categories.
	Given a category $\cat{E}$, it follows from \cite[\ktag{Remark}{00F8}]{lurie} that there exists a bijective correspondence between strict monoidal functors $\func{\opI}{\EndCat{E}}$ and $2$-functors $\func{\opIvar}{\biCAT}$ which sends $*$ to $\cat{E}$.
	Using the notation of Definition~\ref{defn:rel-mor}, if we write $\deloop{\Xcal}$ and $\deloop{\Ycal}$ for the $2$-functors $\func{\opIvar}{\biCAT}$ that correspond to $\sobjcal{X}$ and $\sobjcal{Y}$, respectively, then, giving a relative morphism $\func{\sobjcal{X}}{\sobjcal{Y}}$ is equivalent to giving a \term{lax transformation} $\func{\deloop{\Xcal}}{\deloop{\Ycal}}$ in the sense of \cite[Definition 4.2.1]{johnson-yau}.
    The invertibility condition we wish to impose on a relative morphism $\func{\sobjcal{X}}{\sobjcal{Y}}$ to deduce condition ($*$) can be rephrased by saying that it corresponds to a \term{strong transformation} $\func{\deloop{\Xcal}}{\deloop{\Ycal}}$.
\end{rem}

Our terminology is justified by the following description in the absolute case:

\begin{rem}
	\label{rem:abso-mor}
	Let $\cat{C}$ be a monoidal category, $\func[\sobjcal{X}, \sobjcal{Y}]{\opI}{\EndCat{C}}$ strict monoidal functors.
    Then, for any collection $\vareta = \famnat{\func[\vareta[n]]{\Ycal_n}{\Xcal_n}}$ of natural transformations, the pair $(\id{\cat{C}}, \vareta)$ is a relative morphism $\func{\sobjcal{X}}{\sobjcal{Y}}$ if and only if $\vareta$ is a monoidal natural transformation $\func{\sobjcal{Y}}{\sobjcal{X}}$.
    Indeed, (\ref{eq:mor-1}) corresponds to the naturality of $\vareta$, and (\ref{eq:mor-2}) and (\ref{eq:mor-3}) to the extra conditions in Definition~\ref{defn:mon-nat-trans}.
\end{rem}

In the rest of this subsection, we prove that the collection $\vareta$ that appears in a relative morphism $\func[(F, \vareta)]{\sobjcal{X}}{\sobjcal{Y}}$ between strict monoidal {\ABSO}s can be recovered from $\vareta[1]$ subject to some conditions, using the description of the category $\augbr$ in Definition~\ref{defn:augbr}.

In what follows, we fix
\begin{itemize}
	\item categories $\cat{C}$ and $\cat{D}$,
	\item strict monoidal functors $\func[\sobjcal{X}]{\opI}{\EndCat{C}}$ and $\func[\sobjcal{Y}]{\opI}{\EndCat{D}}$, and
	\item a functor $\fct[F]{C}{D}$.
\end{itemize}

We first describe a condition on a natural transformation $\func[\theta]{\Ycal_1 \circ F}{F \circ \Xcal_1}$ such that $F$ lifts to the unique relative morphism $\func[(F, \vareta)]{\sobjcal{X}}{\sobjcal{Y}}$ with the property that $\vareta[1] = \theta$.

\begin{lem}
	\label{lem:pre-mor-2}
	Let $\vareta= \famnat{\func[\vareta[n]]{\Ycal_n \circ F}{F \circ \Xcal_n}}$ be a collection of natural transformations.
	We define $\Seta$ to be the subset of $\mathbb{N}^2$ consisting of those pairs $(m, n)$ such that \emph{(\ref{eq:mor-3})} holds.
	Then, for all $\ell, m, n \ge 0$ such that $(\ell, m), (m, n) \in \Seta$, the conditions $(\ell + m, n) \in \Seta$ and $(\ell, m + n) \in \Seta$ are equivalent.
\end{lem}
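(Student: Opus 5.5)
The plan is to compare the two candidate expressions for $\vareta[\ell + m + n]$ obtained by splitting $\ell + m + n$ in the two possible ways, and to show that they agree once we use the hypotheses $(\ell, m), (m, n) \in \Seta$. Since $\sobjcal{X}$ and $\sobjcal{Y}$ are strict monoidal, we have $\Xcal_{m + n} = \Xcal_m \circ \Xcal_n$ and $\Ycal_{\ell + m} = \Ycal_\ell \circ \Ycal_m$. The idea is to expand the two sides of (\ref{eq:mor-3}) for the pair $(\ell + m, n)$ and for the pair $(\ell, m + n)$, and to show that both right-hand sides reduce to the same ``three-fold'' composite
\[
(\vareta[\ell] \star \Xcal_m \star \Xcal_n) \circ (\Ycal_\ell \star \vareta[m] \star \Xcal_n) \circ (\Ycal_\ell \star \Ycal_m \star \vareta[n]).
\]
Both conditions have the same left-hand side $\vareta[\ell + m + n]$, so each condition is then equivalent to the statement that $\vareta[\ell + m + n]$ equals this common composite. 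That gives the equivalence of the two conditions.

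First, take the right-hand side of (\ref{eq:mor-3}) for $(\ell + m, n)$, namely $(\vareta[\ell + m] \star \Xcal_n) \circ (\Ycal_{\ell + m} \star \vareta[n])$. We substitute $\vareta[\ell + m] = (\vareta[\ell] \star \Xcal_m) \circ (\Ycal_\ell \star \vareta[m])$, which is the hypothesis $(\ell, m) \in \Seta$. Whiskering distributes over vertical composition, so $(\alpha \circ \beta) \star \Xcal_n = (\alpha \star \Xcal_n) \circ (\beta \star \Xcal_n)$. Together with $\Ycal_{\ell + m} = \Ycal_\ell \circ \Ycal_m$, this yields exactly the three-fold composite above.

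Second, take the right-hand side for $(\ell, m + n)$, namely $(\vareta[\ell] \star \Xcal_{m + n}) \circ (\Ycal_\ell \star \vareta[m + n])$. We substitute $\vareta[m + n] = (\vareta[m] \star \Xcal_n) \circ (\Ycal_m \star \vareta[n])$, which is the hypothesis $(m, n) \in \Seta$. Using $\Ycal_\ell \star (\alpha \circ \beta) = (\Ycal_\ell \star \alpha) \circ (\Ycal_\ell \star \beta)$ and $\Xcal_{m + n} = \Xcal_m \circ \Xcal_n$, we again obtain the same composite.

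I expect no real obstacle. The only point needing care is the bookkeeping of whiskering, in particular the associativity of iterated whiskering, e.g. $(\vareta[\ell] \star \Xcal_m) \star \Xcal_n = \vareta[\ell] \star (\Xcal_m \circ \Xcal_n)$. These are strict equalities in $\biCAT$, and the strictness of $\sobjcal{X}$ and $\sobjcal{Y}$ makes the objects match on the nose, so no coherence isomorphisms appear.
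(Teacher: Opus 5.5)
Your proposal is correct and matches the paper's proof: the paper also rewrites $(\eta_{\ell+m} \star \mathcal{X}_n) \circ (\mathcal{Y}_{\ell+m} \star \eta_n)$ into $(\eta_\ell \star \mathcal{X}_{m+n}) \circ (\mathcal{Y}_\ell \star \eta_{m+n})$. It does so through the common composite $(\eta_\ell \star \mathcal{X}_m \star \mathcal{X}_n) \circ (\mathcal{Y}_\ell \star \eta_m \star \mathcal{X}_n) \circ (\mathcal{Y}_\ell \star \mathcal{Y}_m \star \eta_n)$, using exactly the hypotheses $(\ell, m), (m, n) \in S_\eta$, strictness, and the compatibility of whiskering with vertical composition; since both conditions have the same left-hand side $\eta_{\ell+m+n}$, they are equivalent.
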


\begin{proof}
    This follows from the equalities
	\begin{align*}
		(\vareta[\ell + m] \star \Xcal_n) \circ (\Ycal_{\ell + m} \star \vareta[n]) 
		&= (((\vareta[\ell] \star \Xcal_m) \circ (\Ycal_\ell \star \vareta[m])) \star \Xcal_n) \circ (\Ycal_\ell \star \Ycal_m \star \vareta[n]) \\
		&= (\vareta[\ell] \star \Xcal_m \star \Xcal_n) \circ (\Ycal_\ell \star \vareta[m] \star \Xcal_n) \circ (\Ycal_\ell \star \Ycal_m \star \vareta[n]) \\
		&= (\vareta[\ell] \star \Xcal_m \star \Xcal_n) \circ (\Ycal_\ell \star ((\vareta[m] \star \Xcal_n) \circ (\Ycal_m \star \vareta[n]))) \\
		&= (\vareta[\ell] \star \Xcal_{m + n}) \circ (\Ycal_\ell \star \vareta[m + n]),
	\end{align*}
	where the first and fourth equalities follow from the assumptions $(\ell, m) \in \Seta$ and $(m, n) \in \Seta$.
\end{proof}

\begin{lem}
	\label{lem:mor-2-and-3}
	Let $\func[\theta]{\Ycal_1 \circ F}{F \circ \Xcal_1}$ be a natural transformation.
	We inductively define natural transformations $\func[\vareta[n]]{\Ycal_n \circ F}{F \circ \Xcal_n}$ by $\varetazero \coloneqq \id{F}$ and $\vareta[n + 1] \coloneqq (\theta \star \Xcal_n) \circ (\Ycal_1 \star \vareta[n])$ for all $n \ge 0$.
	Then, the pair $(F, \vareta)$ satisfies the equalities $\vareta[1] = \theta$, \emph{(\ref{eq:mor-2})}, and \emph{(\ref{eq:mor-3})} for all $m, n \ge 0$.
\end{lem}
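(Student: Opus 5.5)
The equality $\vareta[1] = \theta$ and (\ref{eq:mor-2}) are read off directly from the inductive definition. For $n = 0$ we have $\vareta[1] = (\theta \star \Xcal_0) \circ (\Ycal_1 \star \varetazero) = (\theta \star \id{\cat{C}}) \circ (\Ycal_1 \star \id{F}) = \theta$, since $\Xcal_0 = \id{\cat{C}}$ and $\Ycal_0 = \id{\cat{D}}$ by strictness of $\sobjcal{X}$ and $\sobjcal{Y}$. Moreover (\ref{eq:mor-2}) holds by definition.

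The real content is (\ref{eq:mor-3}) for all $m, n \ge 0$. I will prove it using the set $\Seta \subset \mathbb{N}^2$ of Lemma~\ref{lem:pre-mor-2}, showing $\Seta = \mathbb{N}^2$. The steps are as follows.

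\begin{enumerate}
    \item The pairs $(0, n)$ and $(m, 0)$ lie in $\Seta$ for all $m, n$. Because $\varetazero = \id{F}$ and $\Xcal_0$, $\Ycal_0$ are identity functors, the right-hand side of (\ref{eq:mor-3}) reduces to $\vareta[n]$, respectively $\vareta[m]$.
    \item The pairs $(1, n)$ lie in $\Seta$ for all $n$. This is exactly the recursive definition $\vareta[n + 1] = (\theta \star \Xcal_n) \circ (\Ycal_1 \star \vareta[n])$ combined with $\vareta[1] = \theta$.
    \item Induction on $m$. Suppose $(m, n) \in \Seta$ for all $n$, for some $m \ge 1$. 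Apply Lemma~\ref{lem:pre-mor-2} with $\ell = 1$. Its hypotheses $(1, m) \in \Seta$ and $(m, n) \in \Seta$ hold, and $(1, m + n) \in \Seta$ holds by step 2. The lemma's equivalence then gives $(1 + m, n) \in \Seta$.
\end{enumerate}

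This covers all pairs, so $\Seta = \mathbb{N}^2$.

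The only potential obstacle is bookkeeping with whiskerings in step 3, but that is absorbed entirely by Lemma~\ref{lem:pre-mor-2}. So the proof amounts to verifying the base cases carefully. In particular, I would use $\Ycal_{1+n} = \Ycal_1 \circ \Ycal_n$ and $\Xcal_{1+n} = \Xcal_1 \circ \Xcal_n$, which hold by strict monoidality, to match the types in step 2.
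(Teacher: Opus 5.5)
Your proposal is correct and follows essentially the same route as the paper. The paper also reads off $\vareta[1] = \theta$ and (\ref{eq:mor-2}) from the definition. It then proves (\ref{eq:mor-3}) by induction on $m$: it uses $(1, p) \in \Seta$ for all $p$, which is just the recursive definition, and applies Lemma~\ref{lem:pre-mor-2} with $\ell = 1$ to pass from $(m, n) \in \Seta$ to $(m + 1, n) \in \Seta$. The only cosmetic difference is that the paper starts the induction at $m = 0$, so your separate base case $m = 1$ and the pairs $(m, 0)$ in your step 1 are harmless but unnecessary.
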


\begin{proof}
    Since $\Ycal_1 \star \varetazero = \id{\Ycal_1 \circ F}$ and $\theta \star \Xcal_0 = \theta \star \id{\cat{C}} = \theta$, it follows that $\vareta[1] = (\theta \star \Xcal_0) \circ (\Ycal_1 \star \varetazero) = \theta \star \Xcal_0 = \theta$, whereas (\ref{eq:mor-2}) is clearly satisfied.
    It now remains to prove that (\ref{eq:mor-3}) holds, or equivalently, that $(m, n) \in \Seta$, for all $m, n \ge 0$, where $\Seta$ is defined as in Lemma~\ref{lem:pre-mor-2}.
    We argue by double induction.
    For all $n \ge 0$, since $\Ycal_0 \star \vareta[n] = \id{\cat{D}} \star \vareta[n] = \vareta[n]$ and $\varetazero \star \Xcal_n = \id{F \circ \Xcal_n}$, it follows that $(\varetazero \star \Xcal_n) \circ (\Ycal_0 \star \vareta[n]) = \Ycal_0 \star \vareta[n] = \vareta[n]$, and the case $m = 0$ follows.
    We fix $m \ge 0$, and suppose that (\ref{eq:mor-3}) holds for the fixed $m$ and all $n \ge 0$.
    By the definition of $\vareta$, we have $(1, p) \in \Seta$ for all $p \ge 0$, and in particular, $(1, m), (1, m + n) \in \Seta$.
    Since $(m, n) \in \Seta$ by the inductive hypothesis, Lemma~\ref{lem:pre-mor-2} implies that $(m + 1, n) 
    \in \Seta$, as desired.
\end{proof}

\begin{prop}
	\label{prop:mor-2}
    Let $\cat{C}$ and $\cat{D}$ be categories, $\func[\sobjcal{X}]{\opI}{\EndCat{C}}$ and $\func[\sobjcal{Y}]{\opI}{\EndCat{D}}$ strict monoidal functors, and $\fct[F]{C}{D}$ a functor.
    Then, the assignment $\vareta \mapsto \vareta[1]$ induces a bijective correspondence between
	\begin{itemize}
		\item collections $\vareta = \famnat{\func[\vareta[n]]{\Ycal_n \circ F}{F \circ \Xcal_n}}$ of natural transformations such that the pair $(F, \vareta)$ satisfies \emph{(\ref{eq:mor-2})} and \emph{(\ref{eq:mor-3})} for all $m, n \ge 0$, and
		\item natural transformations $\func[\theta]{\Ycal_1 \circ F}{F \circ \Xcal_1}$.
	\end{itemize}
\end{prop}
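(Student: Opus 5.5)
The plan is to prove that the map $\vareta \mapsto \vareta[1]$ is both surjective and injective, using Lemma~\ref{lem:mor-2-and-3} for the first and an induction based on (\ref{eq:mor-2}) and (\ref{eq:mor-3}) for the second.

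\emph{Surjectivity.} Given a natural transformation $\func[\theta]{\Ycal_1 \circ F}{F \circ \Xcal_1}$, I will take the collection $\vareta$ constructed inductively in Lemma~\ref{lem:mor-2-and-3}, namely $\varetazero \coloneqq \id{F}$ and $\vareta[n + 1] \coloneqq (\theta \star \Xcal_n) \circ (\Ycal_1 \star \vareta[n])$. That lemma states that this collection satisfies (\ref{eq:mor-2}) and (\ref{eq:mor-3}) for all $m, n \ge 0$, and that $\vareta[1] = \theta$. Hence every $\theta$ lies in the image of the assignment.

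\emph{Injectivity.} Suppose $\vareta$ and $\varetaVV$ both satisfy (\ref{eq:mor-2}) and (\ref{eq:mor-3}), and that $\vareta[1] = \varetaVV[1]$. I will show $\vareta[n] = \varetaVV[n]$ for all $n \ge 0$ by induction on $n$.
\begin{itemize}
\item For $n = 0$, both collections equal $\id{F}$ by (\ref{eq:mor-2}).
\item For $n = 1$, equality is the hypothesis.
\item For the inductive step, apply (\ref{eq:mor-3}) with $m = 1$. This gives
\[
\vareta[n + 1] = (\vareta[1] \star \Xcal_n) \circ (\Ycal_1 \star \vareta[n]),
\]
and the same formula holds for $\varetaVV$. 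The right-hand side depends only on $\vareta[1]$ and $\vareta[n]$, which agree with $\varetaVV[1]$ and $\varetaVV[n]$ by hypothesis and induction. Hence $\vareta[n + 1] = \varetaVV[n + 1]$.
\end{itemize}

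The same computation also shows that every collection satisfying (\ref{eq:mor-2}) and (\ref{eq:mor-3}) coincides with the one that Lemma~\ref{lem:mor-2-and-3} builds from its own $\vareta[1]$. So the two assignments are mutually inverse.

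There is no real obstacle here, since the substantive double induction was already carried out in Lemma~\ref{lem:mor-2-and-3}. The only point to watch is that the index $m = 1$ of (\ref{eq:mor-3}) is placed on the left-hand factor, so that it matches the recursion used in Lemma~\ref{lem:mor-2-and-3}.
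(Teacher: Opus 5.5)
Your proposal is correct and is essentially the paper's proof: surjectivity comes from Lemma~\ref{lem:mor-2-and-3}, and injectivity follows by induction on $n$, using (\ref{eq:mor-2}) for $n = 0$ and (\ref{eq:mor-3}) with $m = 1$ for the inductive step. Your separate $n = 1$ base case is harmless but redundant, since the inductive step from $n = 0$ already covers it.
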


\begin{proof}
    We first note that this assignment is surjective by Lemma~\ref{lem:mor-2-and-3}.
    In order to prove the injectivity, take collections $\vareta$ and $\varetaVV$ of natural transformations subject to the displayed conditions, and suppose that $\vareta[1] = \varetaVV[1]$.
	We prove that $\vareta[n] = \varetaVV[n]$ by induction on $n \ge 0$.
	The case $n = 0$ follows since $\varetazero = \id{F} = \varetaVVzero$ by (\ref{eq:mor-2}).
	If the equality $\vareta[n] = \varetaVV[n]$ holds, then we can compute
	\[\vareta[n + 1] 
	= (\vareta[1] \star \Xcal_n) \circ (\Ycal_1 \star \vareta[n]) 
	= (\varetaVV[1] \star \Xcal_n) \circ (\Ycal_1 \star \vareta[n]) 
	= (\varetaVV[1] \star \Xcal_n) \circ (\Ycal_1 \star \varetaVV[n]) 
	= \varetaVV[n + 1],\]
	where the first and fourth equalities follow from (\ref{eq:mor-3}), the second equality from $\vareta[1] = \varetaVV[1]$, and the third equality from the inductive hypothesis.
	This shows that the assignment is also injective, as desired.
\end{proof}

\begin{lem}
	\label{lem:mor-1}
	Let $\vareta= \famnat{\func[\vareta[n]]{\Ycal_n \circ F}{F \circ \Xcal_n}}$ be a collection of natural transformations.
	For all $m, n \ge 0$, we define $\Teta{m}{n}$ to be the set of those morphisms $\funcgene[\phi]{m}{n}$ in $\cat{I}$ such that \emph{(\ref{eq:mor-1})} holds.
	\begin{enumerate}
        \item \label{Ieta-cond-1} For all $\ell, m, n \ge 0$, $\phi \in \Teta{m}{n}$ and $\psi \in \Teta{\ell}{m}$ imply that $\phi \circ \psi \in \Teta{\ell}{n}$.
		\item \label{Ieta-cond-2} For all $n \ge 0$, we have $\id{\gene{n}} \in \Teta{n}{n}$.
		\item \label{Ieta-cond-3} For all isomorphisms $\funcgene[\phi]{m}{n}$ in $\cat{I}$ which belong to $\Teta{m}{n}$, we have $\phi^{-1} \in \Teta{n}{m}$.
	\end{enumerate}
	Suppose further that the pair $(F, \vareta)$ satisfies \emph{(\ref{eq:mor-3})}.
	Then\emph{:}
	\begin{enumerate}[resume]
		\item \label{Ieta-cond-4} For all $\phi \in \Teta{m}{n}$ and $\phi' \in \Teta{m'}{n'}$, we have $\phi \ot \phi' \in \Teta{m + m'}{n + n'}$.
	\end{enumerate}
\end{lem}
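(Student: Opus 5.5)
The plan is to prove each part by direct manipulation of natural transformations, using only the functoriality of $\sobjcal{X}$ and $\sobjcal{Y}$, the interchange law for $\star$ and $\circ$, and, in part (4), the strict monoidality of $\sobjcal{X}$, $\sobjcal{Y}$ together with (\ref{eq:mor-3}). Since $\sobjcal{X}$ and $\sobjcal{Y}$ are defined on $\opI$, a morphism $\funcgene[\phi]{m}{n}$ in $\cat{I}$ gives $\Xcal(\phi) \colon \Xcal_n \to \Xcal_m$. Moreover, for $\funcgene[\psi]{\ell}{m}$ we have $\Xcal(\phi \circ \psi) = \Xcal(\psi) \circ \Xcal(\phi)$, and likewise for $\Ycal$.

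For (1), I write $(\Ycal(\phi \circ \psi) \star F) = (\Ycal(\psi) \star F) \circ (\Ycal(\phi) \star F)$. Then
\[
\vareta[\ell] \circ (\Ycal(\psi) \star F) \circ (\Ycal(\phi) \star F) = (F \star \Xcal(\psi)) \circ \vareta[m] \circ (\Ycal(\phi) \star F) = (F \star \Xcal(\psi)) \circ (F \star \Xcal(\phi)) \circ \vareta[n],
\]
where the first equality uses $\psi \in \Teta{\ell}{m}$ and the second uses $\phi \in \Teta{m}{n}$. The right-hand side equals $(F \star \Xcal(\phi \circ \psi)) \circ \vareta[n]$, which proves (1).

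Part (2) is immediate, because both $\Ycal(\id{}) \star F$ and $F \star \Xcal(\id{})$ are identities.

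For (3), I start from (\ref{eq:mor-1}) for $\phi$. I precompose it with $\Ycal(\phi^{-1}) \star F = (\Ycal(\phi) \star F)^{-1}$ and postcompose it with $F \star \Xcal(\phi^{-1}) = (F \star \Xcal(\phi))^{-1}$. This yields $(F \star \Xcal(\phi^{-1})) \circ \vareta[m] = \vareta[n] \circ (\Ycal(\phi^{-1}) \star F)$, which is exactly (\ref{eq:mor-1}) for $\phi^{-1}$.

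Part (4) is the only step with real bookkeeping. By strictness, $\Ycal(\phi \ot \phi') = \Ycal(\phi) \star \Ycal(\phi')$ and $\Xcal(\phi \ot \phi') = \Xcal(\phi) \star \Xcal(\phi')$. I expand $\vareta[m + m'] = (\vareta[m] \star \Xcal_{m'}) \circ (\Ycal_m \star \vareta[m'])$ using (\ref{eq:mor-3}). I factor $\Ycal(\phi) \star \Ycal(\phi') \star F$ as $(\Ycal(\phi) \star \Ycal_{m'} \star F) \circ (\Ycal_n \star \Ycal(\phi') \star F)$. The interchange law then lets me slide $\Ycal_m \star \vareta[m']$ past $\Ycal(\phi) \star \Ycal_{m'} \star F$, turning it into $(\Ycal(\phi) \star F \star \Xcal_{m'}) \circ (\Ycal_n \star \vareta[m'])$.

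Next I apply $\phi' \in \Teta{m'}{n'}$ after whiskering with $\Ycal_n$. This replaces $(\Ycal_n \star \vareta[m']) \circ (\Ycal_n \star \Ycal(\phi') \star F)$ by $(\Ycal_n \star F \star \Xcal(\phi')) \circ (\Ycal_n \star \vareta[n'])$. I also apply $\phi \in \Teta{m}{n}$ after whiskering with $\Xcal_{m'}$.

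After these substitutions, one more interchange (moving $\vareta[n] \star \Xcal_{m'}$ past $\Ycal_n \star F \star \Xcal(\phi')$) collects everything into
\[
(F \star \Xcal(\phi) \star \Xcal(\phi')) \circ (\vareta[n] \star \Xcal_{n'}) \circ (\Ycal_n \star \vareta[n']).
\]
By (\ref{eq:mor-3}) again, this equals $(F \star \Xcal(\phi \ot \phi')) \circ \vareta[n + n']$, which is (\ref{eq:mor-1}) for $\phi \ot \phi'$. The main care needed here is keeping the whiskering order and the contravariance straight.
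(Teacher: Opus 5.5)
Your proposal is correct and follows essentially the same argument as the paper. Parts (1)--(3) are the same contravariance and inverse computations. In part (4) you use the same ingredients: strictness, (\ref{eq:mor-3}) on both sides, the interchange law, and the hypotheses for $\phi$ and $\phi'$ whiskered with $\Xcal_{m'}$ and $\Ycal_n$. The only difference is cosmetic: you rewrite the left-hand side straight into the right-hand side, whereas the paper expands both sides and matches them in the middle.
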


\begin{proof}
	\begin{enumerate}
        \item This follows from the equalities
		\begin{align*}
			\vareta[\ell] \circ (\Ycal(\phi \circ \psi) \star F) 
			&= \vareta[\ell] \circ ((\Ycal(\psi) \circ \Ycal(\phi)) \star F) 
			= \vareta[\ell] \circ (\Ycal(\psi) \star F) \circ (\Ycal(\phi) \star F) \\
			&= (F \star \Xcal(\psi)) \circ \vareta[m] \circ (\Ycal(\phi) \star F) 
			= (F \star \Xcal(\psi)) \circ (F \star \Xcal(\phi)) \circ \vareta[n] \\
			&= (F \star (\Xcal(\psi) \circ \Xcal(\phi))) \circ \vareta[n] 
			= (F \star \Xcal(\phi \circ \psi)) \circ \vareta[n],
		\end{align*}
		where the third and fourth equalities follow from the assumptions $\psi \in \Teta{\ell}{m}$ and $\phi \in \Teta{m}{n}$.
		\item This follows since $\vareta[n] \circ (\Ycal(\id{\gene{n}}) \star F) = \vareta[n] \circ \id{\Ycal_n \circ F} = \id{F \circ \Xcal_n} \circ \vareta[n] = (F \star \Xcal(\id{\gene{n}})) \circ \vareta[n]$.
		\item Since $F \star \Xcal(\phi)$ is a natural isomorphism, this follows from the equalities
		\begin{align*}
			\vareta[n] \circ (\Ycal(\phi^{-1}) \star F) 
			&= (F \star \Xcal(\phi))^{-1} \circ (F \star \Xcal(\phi)) \circ \vareta[n] \circ (\Ycal(\phi^{-1}) \star F) \\
			&= (F \star \Xcal(\phi)^{-1}) \circ (F \star \Xcal(\phi)) \circ \vareta[n] \circ (\Ycal(\phi)^{-1} \star F) \\
			&= (F \star \Xcal(\phi^{-1})) \circ (F \star \Xcal(\phi)) \circ \vareta[n] \circ (\Ycal(\phi) \star F)^{-1} \\
			&= (F \star \Xcal(\phi^{-1})) \circ \vareta[m] \circ (\Ycal(\phi) \star F) \circ (\Ycal(\phi) \star F)^{-1} 
			= (F \star \Xcal(\phi^{-1})) \circ \vareta[m],
		\end{align*}
		where the fourth equality follows from the assumption $\phi \in \Teta{m}{n}$.
		\item Using (\ref{eq:mor-3}), we can deduce the following equalities:
		\begin{align*}
			\vareta[m + m'] \circ (\Ycal(\phi \ot \phi') \star F) 
			&= \vareta[m + m'] \circ (\Ycal(\phi) \star \Ycal(\phi') \star F) \\
			&= (\vareta[m] \star \Xcal_{m'}) \circ (\Ycal_m \star \vareta[m']) \circ (\Ycal(\phi) \star \Ycal(\phi') \star F) \\
			&= (\vareta[m] \star \Xcal_{m'}) \circ (\Ycal(\phi) \star (\vareta[m'] \circ (\Ycal(\phi') \star F))); \\
			(F \star \Xcal(\phi \ot \phi')) \circ \vareta[n + n'] 
			&= (F \star \Xcal(\phi) \star \Xcal(\phi')) \circ \vareta[n + n'] \\
			&= (F \star \Xcal(\phi) \star \Xcal(\phi')) \circ (\vareta[n] \star \Xcal_{n'}) \circ (\Ycal_n \star \vareta[n']) \\
			&= (((F \star \Xcal(\phi)) \circ \vareta[n]) \star \Xcal(\phi')) \circ (\Ycal_n \star \vareta[n']).
		\end{align*}
		It therefore suffices to prove the equality
		\[(\vareta[m] \star \Xcal_{m'}) \circ (\Ycal(\phi) \star (\vareta[m'] \circ (\Ycal(\phi') \star F))) = (((F \star \Xcal(\phi)) \circ \vareta[n]) \star \Xcal(\phi')) \circ (\Ycal_n \star \vareta[n']).\]
		This can be verified as
		\begin{align*}
			(\vareta[m] \star \Xcal_{m'}) \circ (\Ycal(\phi) \star (\vareta[m'] \circ (\Ycal(\phi') \star F))) 
			&= (\vareta[m] \star \Xcal_{m'}) \circ (\Ycal(\phi) \star ((F \star \Xcal(\phi')) \circ \vareta[n'])) \\
			&= (\vareta[m] \star \Xcal_{m'}) \circ (\Ycal(\phi) \star F \star \Xcal(\phi')) \circ (\Ycal_n \star \vareta[n']) \\
			&= ((\vareta[m] \circ (\Ycal(\phi) \star F)) \star \Xcal(\phi')) \circ (\Ycal_n \star \vareta[n']) \\
			&= (((F \star \Xcal(\phi)) \circ \vareta[n]) \star \Xcal(\phi')) \circ (\Ycal_n \star \vareta[n']),
		\end{align*}
		where the first and fourth equalities follow from $\phi' \in \Teta{m'}{n'}$ and $\phi \in \Teta{m}{n}$.
    \end{enumerate}
	\vspace{-1\baselineskip}
\end{proof}

\begin{rem}
	\label{rem:Ieta}
	Using the notation of Lemma~\ref{lem:mor-1}, conditions~\ref{Ieta-cond-1} and \ref{Ieta-cond-2} are rephrased as the following single condition: Given a collection $\vareta$ in general, we can define a subcategory $\Ieta$ of $\cat{I}$, which has the same objects as $\cat{I}$, whereas $\hom{{\minusmyspace} \hspace{0.1em} \Ieta \hspace{-0.1em}}{\gene{m}}{\gene{n}} \coloneqq \Teta{m}{n}$ for all $m, n \ge 0$.
	The condition~\ref{Ieta-cond-3} says that the inclusion functor $\Ieta \hookrightarrow \cat{I}$ is \term{conservative}, or in other words, it \term{reflects isomorphisms}, in the sense that any morphism in $\Ieta$ which is an isomorphism in $\cat{I}$ is necessarily an isomorphism in the subcategory $\Ieta$ as well.
    On the other hand, condition~\ref{Ieta-cond-4} says that $\Ieta$ is a monoidal subcategory of $\cat{I}$.
\end{rem}

\begin{prop}
	\label{prop:mor-3}
	Let $\cat{C}$ and $\cat{D}$ be categories, $\sobjcal{X}$ and $\sobjcal{Y}$ strict monoidal {\ABSO}s in $\EndCat{C}$ and $\EndCat{D}$, respectively, and $\fct[F]{C}{D}$ a functor.
	Then, the correspondence $\vareta \mapsto \vareta[1]$ of \emph{Proposition~\ref{prop:mor-2}} restricts to a bijective correspondence between
    \begin{itemize}
		\item collections $\vareta = \famnat{\func[\vareta[n]]{\Ycal_n \circ F}{F \circ \Xcal_n}}$ of natural transformations such that the pair $(F, \vareta)$ is a relative morphism from $\sobjcal{X}$ to $\sobjcal{Y}$, and
		\item natural transformations $\func[\theta]{\Ycal_1 \circ F}{F \circ \Xcal_1}$ which satisfies the following equalities\emph{:}
		\begin{align}
			(\Ycal(\cofacevar) \star F) 
			&= (F \star \Xcal(\cofacevar)) \circ \theta; \label{eq:theta-1} \\
			(\theta \star \Xcal_1) \circ (\Ycal_1 \star \theta) \circ (\Ycal(\codegevar) \star F) 
			&= (F \star \Xcal(\codegevar)) \circ \theta; \label{eq:theta-2} \\
			(\theta \star \Xcal_1) \circ (\Ycal_1 \star \theta) \circ (\Ycal(\copermvar) \star F) 
			&= (F \star \Xcal(\copermvar)) \circ (\theta \star \Xcal_1) \circ (\Ycal_1 \star \theta). \label{eq:theta-3}
		\end{align}
	\end{itemize}
\end{prop}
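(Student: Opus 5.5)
Since Proposition~\ref{prop:mor-2} already gives a bijection between collections satisfying (\ref{eq:mor-2}) and (\ref{eq:mor-3}) and natural transformations $\theta$, it suffices to show the following. For a collection $\vareta$ satisfying (\ref{eq:mor-2}) and (\ref{eq:mor-3}), equality (\ref{eq:mor-1}) holds for all morphisms $\phi$ in $\augbr$ if and only if $\theta \coloneqq \vareta[1]$ satisfies (\ref{eq:theta-1})--(\ref{eq:theta-3}). Throughout I use (\ref{eq:mor-3}) with $m = n = 1$ to write $\vareta[2] = (\theta \star \Xcal_1) \circ (\Ycal_1 \star \theta)$. I also use $\vareta[0] = \id{F}$ together with $\Xcal_0 = \id{\cat{C}}$ and $\Ycal_0 = \id{\cat{D}}$ from strictness.

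\emph{Necessity.} Suppose $(F, \vareta)$ is a relative morphism. Apply (\ref{eq:mor-1}) with $\phi = \cofacevar \colon \gene{0} \to \gene{1}$, and recall that $\op{\augbr}$ reverses arrows, so $\Xcal(\cofacevar)\colon \Xcal_1 \to \Xcal_0$. The two sides become $\vareta[0] \circ (\Ycal(\cofacevar) \star F)$ and $(F \star \Xcal(\cofacevar)) \circ \vareta[1]$. Using $\vareta[0] = \id{F}$, this is exactly (\ref{eq:theta-1}). Applying (\ref{eq:mor-1}) to $\phi = \codegevar\colon \gene{2} \to \gene{1}$ gives $\vareta[2] \circ (\Ycal(\codegevar) \star F) = (F \star \Xcal(\codegevar)) \circ \vareta[1]$. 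After substituting the formula for $\vareta[2]$, this is (\ref{eq:theta-2}). Applying it to $\phi = \copermvar\colon \gene{2} \to \gene{2}$ gives (\ref{eq:theta-3}) in the same way. The only care needed here is the index bookkeeping: one must check that the direction of (\ref{eq:mor-1}) matches the stated equalities, that is, which of $m$ and $n$ is source and which is target once the opposite category is taken into account.

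\emph{Sufficiency.} Now suppose $\theta = \vareta[1]$ satisfies (\ref{eq:theta-1})--(\ref{eq:theta-3}). By the computation above, $\cofacevar$, $\codegevar$ and $\copermvar$ all lie in the subcategory $\Ieta$ of Remark~\ref{rem:Ieta}. By Lemma~\ref{lem:mor-1}, $\Ieta$ is a monoidal subcategory of $\augbr$; property~\ref{Ieta-cond-4} applies because (\ref{eq:mor-3}) holds. It contains all identities, so every $\gene{j}$ is in it. Tensoring then shows that all $\coface{j}{n} = \gene{j} \ot \cofacevar \ot \gene{n-j}$, all $\codege{j}{n}$, and all $\coperm{i}{n} = \gene{i-1} \ot \copermvar \ot \gene{n-i}$ lie in $\Ieta$. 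By Lemma~\ref{lem:mor-1}~\ref{Ieta-cond-3}, the inverses $(\coperm{i}{n})^{-1}$ lie in $\Ieta$ as well. Thus $\Ieta$ contains every generating morphism of $\augbr$.

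Remark~\ref{rem:generate-augbr} says that any subcategory containing all generating morphisms is all of $\augbr$. Hence (\ref{eq:mor-1}) holds for every $\phi$, and $(F, \vareta)$ is a relative morphism. The two directions together show that the bijection of Proposition~\ref{prop:mor-2} restricts as claimed.

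I expect no substantial obstacle. The one point needing care is in the sufficiency step: confirming that the strict monoidal structure of $\augbr$ really expresses each generator as a tensor product of $\cofacevar$, $\codegevar$ or $\copermvar$ with identities. This is the content of the remark following Definition~\ref{defn:augbrsemi}.
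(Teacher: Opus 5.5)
Your proposal is correct and follows the paper's proof essentially step for step. You reduce via Proposition~\ref{prop:mor-2}, observe that (\ref{eq:theta-1})--(\ref{eq:theta-3}) say exactly that $\cofacevar \in \Teta{0}{1}$, $\codegevar \in \Teta{2}{1}$, $\copermvar \in \Teta{2}{2}$ once $\vareta[0] = \id{F}$ and $\vareta[2] = (\theta \star \Xcal_1) \circ (\Ycal_1 \star \theta)$ are substituted, and then get sufficiency from Lemma~\ref{lem:mor-1} (with part~\ref{Ieta-cond-3} handling the inverses $(\coperm{i}{n})^{-1}$) and Remark~\ref{rem:generate-augbr}, with your index bookkeeping for (\ref{eq:mor-1}) under the opposite category also correct.
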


\begin{proof}
    Take a collection $\vareta = \famnat{\func[\vareta[n]]{\Ycal_n \circ F}{F \circ \Xcal_n}}$ of natural transformations such that the pair $(F, \vareta)$ satisfies (\ref{eq:mor-2}) and (\ref{eq:mor-3}) for all $m, n \ge 0$, and put $\theta \coloneqq \vareta[1]$.
    Since the pair $(F, \vareta)$ satisfies (\ref{eq:mor-2}) and (\ref{eq:mor-3}), it follows that $\varetazero = \id{F}$, $\vareta[1] = \theta$, and $\vareta[2] = (\vareta[1] \star \Xcal_1) \circ (\Ycal_1 \star \vareta[1]) = (\theta \star \Xcal_1) \circ (\Ycal_1 \star \theta)$, so that (\ref{eq:theta-1}), (\ref{eq:theta-2}), and (\ref{eq:theta-3}) are equivalent to the conditions that $\cofacevar \in \Teta{0}{1}$, $\codegevar \in \Teta{2}{1}$, and $\copermvar \in \Teta{2}{2}$.
    In view of Proposition~\ref{prop:mor-2}, it remains to check that the pair $(F, \vareta)$ satisfies (\ref{eq:mor-1}) for all morphisms $\phi$ in $\augbr$ if and only if $\theta$ satisfies (\ref{eq:theta-1})--(\ref{eq:theta-3}).
    We note that the former assertion holds if and only if the equality $\Teta{m}{n} = \hom{\cat{I}}{\gene{m}}{\gene{n}}$ holds for all $m, n \ge 0$, and the necessity follows readily from this.
    We now prove the sufficiency.
    Suppose that $\theta$ satisfies (\ref{eq:theta-1})--(\ref{eq:theta-3}).
    Since the pair $(F, \vareta)$ satisfies (\ref{eq:mor-3}) (see Proposition~\ref{prop:mor-2}), conditions~\ref{Ieta-cond-1}--\ref{Ieta-cond-4} of Lemma~\ref{lem:mor-1} are all satisfied, and in particular, we have the subcategory $\Ieta$ of $\augbr$, defined as in Remark~\ref{rem:Ieta}.
    It follows from the assumptions that $\cofacevar$, $\codegevar$, and $\copermvar$ are morphisms in $\Ieta$.
    Using conditions~\ref{Ieta-cond-2} and \ref{Ieta-cond-4} twice, it follows that $\coface{j}{n} = \gene{j} \ot \cofacevar \ot \gene{n - j}$ belongs to $\Ieta$ for all $0 \le j \le n$.
    A similar argument shows that all generating morphisms of $\augbr$ belong to $\Ieta$.
    Note that condition \ref{Ieta-cond-3} is required to prove that $(\coperm{i}{n})^{-1}$ belongs to $\Ieta$ for all $1 \le i \le n$.
    We therefore obtain the equality $\Ieta = \augbr$ in view of Remark~\ref{rem:generate-augbr}, which implies the former assertion, as desired.
\end{proof}

\subsection{Relative morphisms over monoidal functors}
\label{subsec:4.2}

In this subsection, we focus on dealing with relative morphisms in the case of interest.
Let $\cat{C}$ and $\cat{D}$ be monoidal categories, and $\brobj{C} = \brcoalg{C}$ and $\brobjv{D} = \brcoalgv{D}$ braided coalgebras in $\cat{C}$ and $\cat{D}$, and we write $\sobjcal{X}$ and $\sobjcal{Y}$ for the monoidal {\ABSO}s in $\EndCat{C}$ and $\EndCat{D}$ that are constructed from $\brobj{C}$ and $\brobjv{D}$ as in Proposition~\ref{prop:nonstr-augbrobj}.
Using the equalities (\ref{eq:wTcalC}), the involved morphisms are described explicitly as:
\begin{alignat}{2}
    \Xcal_1 &= C \ot (-); 
    & \Ycal_1 &= D \ot (-); \label{eq:XY-0} \\
    \Xcal(\cofacevar) &= \lucvar \circ (\eps \ot (-)); 
    & \Ycal(\cofacevar) &= \lucvar' \circ (\eps' \ot (-)); \label{eq:XY-1} \\
    \Xcal(\codegevar) &= \assc{C}{C}{-} \circ (\Delta \ot (-)); 
    & \Ycal(\codegevar) &= \asscv{D}{D}{-} \circ (\Delta' \ot (-)); \label{eq:XY-2} \\
    \Xcal(\copermvar) &= \assc{C}{C}{-} \circ (\br \ot (-)) \circ \assc{C}{C}{-}^{-1}; \qquad 
    & \Ycal(\copermvar) &= \asscv{D}{D}{-} \circ (\br' \ot (-)) \circ \asscvinv{D}{D}{-}. \label{eq:XY-3}
\end{alignat}
Let $\monfctvar{F}{C}{D}$ be a monoidal functor.
We study in more detail the bijective correspondence of Proposition~\ref{prop:mor-3} in this setting, and describe a relative morphism $\func[(F, \vareta)]{\sobjcal{X}}{\sobjcal{Y}}$ using a morphism in $\cat{D}$, by deforming the equalities (\ref{eq:theta-1})--(\ref{eq:theta-3}) with respect to the monoidal structure on $F$.
Since it is still difficult to work in this generality, we make an assumption on the collection $\vareta$ of natural transformations as well.
Note that $\vareta$ corresponds to the natural transformation $\vareta[1]$, which in view of (\ref{eq:XY-0}) is written as
\[D \ot F(-) = \Ycal_1 \circ F \to 
F \circ \Xcal_1 = F(C \ot (-)).\]
Postcomposing $\vareta[1]$ with $\monfctmul{F}{C}{-}^{-1}$ gives rise to a natural transformation from $D \ot F(-)$ to $F(C) \ot F(-)$.
We assume that this natural transformation is obtained by tensoring $\id{F}$ with some morphism $\func{D}{F(C)}$ in $\cat{D}$.
More explicitly, we assume that $\vareta$ satisfies the following condition:
\begin{itemize}
	\item[($**$)] There exists a morphism $\func[\alpha]{D}{F(C)}$ in $\cat{D}$ such that $\vareta[1] = \monfctmul{F}{C}{-} \circ (\alpha \ot F(-))$.
\end{itemize}
Lemma~\ref{lem:monfct-easy} guarantees that such a morphism $\alpha$ is uniquely determined as follows (if it exists):
\begin{equation}
	\label{eq:alpha}
	\alpha = \rucv{F(C)} \circ (F(C) \ot \monfctuni{F}^{-1}) \circ \monfctmul{F}{C}{\unit}^{-1} \circ (\vareta[1])_{\unit} \circ (D \ot \monfctuni{F}) \circ \rucvinv{D}.
\end{equation}

\begin{lem}
    \label{lem:alpha}
    Let $\fct[F = \monfct{F}]{C}{D}$ be a monoidal functor, and $\func[\alpha]{D}{F(C)}$ a morphism in $\cat{D}$.
    Let $\theta$ denote the natural transformation
    \[\Ycal_1 \circ F = D \ot F(-) \xrightarrow{\alpha \ot F(-)} F(C) \ot F(-) \xrightarrow{\monfctmul{F}{C}{-}} F(C \ot (-)) = F \circ \Xcal_1.\]
    \begin{enumerate}
        \item $\theta$ satisfies the equality \emph{(\ref{eq:theta-1})} if and only if $\alpha$ satisfies the equality
        \begin{equation}
            \label{eq:alpha-1}
            \monfctuni{F} \circ \eps' = F(\eps) \circ \alpha.
        \end{equation}
        \item $\theta$ satisfies the equality \emph{(\ref{eq:theta-2})} if and only if $\alpha$ satisfies the equality
        \begin{equation}
            \label{eq:alpha-2}
            \monfctmul{F}{C}{C} \circ (\alpha \ot \alpha) \circ \Delta' = F(\Delta) \circ \alpha.
        \end{equation}
        \item $\theta$ satisfies the equality \emph{(\ref{eq:theta-3})} if and only if $\alpha$ satisfies the equality
        \begin{equation}
            \label{eq:alpha-3}
            \monfctmul{F}{C}{C} \circ (\alpha \ot \alpha) \circ \br' = F(\br) \circ \monfctmul{F}{C}{C} \circ (\alpha \ot \alpha).
        \end{equation}
    \end{enumerate}
\end{lem}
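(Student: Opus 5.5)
The plan is to unwind each of (\ref{eq:theta-1})--(\ref{eq:theta-3}), using the explicit formulas (\ref{eq:XY-0})--(\ref{eq:XY-3}) and $\theta = \monfctmul{F}{C}{-} \circ (\alpha \ot F(-))$, into an equality of natural transformations of the shape
\[
\monfctmul{F}{W}{-} \circ (g \ot F(-)) = \monfctmul{F}{W}{-} \circ (h \ot F(-))
\]
for morphisms $g, h \colon V \to F(W)$ in $\cat{D}$, where $W$ is $\unit$, $C$ or $C \ot C$ and $V$ is $\unit$, $D$ or $D \ot D$. Since $\monfctmulvar{F}$ is invertible, this reduces to $g \ot F(-) = h \ot F(-)$. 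By the faithfulness in Lemma~\ref{lem:monfct-easy}, this is in turn equivalent to $g = h$. Both directions of each equivalence then follow at once, because every rewriting step is an equality of natural transformations.

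For (1), the left side is $\lucv{F(-)} \circ (\eps' \ot F(-))$. The right side is $F(\luc{-}) \circ F(\eps \ot (-)) \circ \monfctmul{F}{C}{-} \circ (\alpha \ot F(-))$, which by (\ref{eq:monfct-0}) equals $F(\luc{-}) \circ \monfctmul{F}{\unit}{-} \circ ((F(\eps)\circ\alpha) \ot F(-))$. Using (\ref{eq:monfct-2-l}), we write $\lucv{F(-)} = F(\luc{-}) \circ \monfctmul{F}{\unit}{-} \circ (\monfctuni{F} \ot F(-))$. Cancelling the isomorphism $F(\luc{-}) \circ \monfctmul{F}{\unit}{-}$, the condition becomes $(\monfctuni{F}\circ\eps') \ot F(-) = (F(\eps)\circ\alpha) \ot F(-)$, which is equivalent to (\ref{eq:alpha-1}).

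For (2) and (3), we first compute $\vareta[2] = (\theta \star \Xcal_1) \circ (\Ycal_1 \star \theta)$ explicitly as
\[
\monfctmul{F}{C}{C \ot (-)} \circ (F(C) \ot \monfctmul{F}{C}{-}) \circ (\alpha \ot (\alpha \ot F(-))).
\]
This is rewritten via the hexagon (\ref{eq:monfct-1}) and naturality of $\asscvar'$ as
\[
F(\assc{C}{C}{-}) \circ \monfctmul{F}{C \ot C}{-} \circ \bigl((\monfctmul{F}{C}{C}\circ(\alpha\ot\alpha)) \ot F(-)\bigr) \circ \asscvinv{D}{D}{F(-)}.
\]
Substituting $\Ycal(\codegevar) \star F = \asscv{D}{D}{F(-)} \circ (\Delta' \ot F(-))$, the associators $\asscv{D}{D}{F(-)}$ cancel. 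The left side of (\ref{eq:theta-2}) thus becomes
\[
F(\assc{C}{C}{-}) \circ \monfctmul{F}{C\ot C}{-} \circ \bigl((\monfctmul{F}{C}{C}(\alpha\ot\alpha)\Delta') \ot F(-)\bigr).
\]
The right side is $F(\assc{C}{C}{-}) \circ F(\Delta \ot (-)) \circ \monfctmul{F}{C}{-} \circ (\alpha \ot F(-))$, which by (\ref{eq:monfct-0}) equals $F(\assc{C}{C}{-}) \circ \monfctmul{F}{C \ot C}{-} \circ ((F(\Delta)\circ\alpha) \ot F(-))$. Cancelling the common isomorphisms gives (\ref{eq:alpha-2}).

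For (3), we proceed the same way, with $\Ycal(\copermvar) \star F = \asscv{D}{D}{F(-)}(\br' \ot F(-))\asscvinv{D}{D}{F(-)}$ on the left and $F(\assc{C}{C}{-})F(\br\ot(-))F(\assc{C}{C}{-})^{-1}$ on the right. After inserting the formula for $\vareta[2]$ and cancelling $F(\assc{C}{C}{-})$, $\monfctmul{F}{C\ot C}{-}$ and the trailing $\asscvinv{D}{D}{F(-)}$, using (\ref{eq:monfct-0}) to move $F(\br)$ past $\monfctmul{F}{C\ot C}{-}$, both sides take the form $(\,\cdot\,) \ot F(-)$. The left side becomes $(\monfctmul{F}{C}{C}(\alpha\ot\alpha)\br') \ot F(-)$ and the right side becomes $(F(\br)\monfctmul{F}{C}{C}(\alpha\ot\alpha)) \ot F(-)$. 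This gives (\ref{eq:alpha-3}) for the morphisms $D\ot D \to F(C\ot C)$.

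The main obstacle is the hexagon rewriting of $\vareta[2]$ and keeping track of which associators appear at the argument $F(-)$ versus inside $F$. I would verify the coherence step carefully, by applying (\ref{eq:monfct-1}) with $X=Y=C$ and $Z$ the variable, together with naturality of $\asscvar'$ in its first two slots applied to $\alpha\ot\alpha$. The faithfulness step uses Lemma~\ref{lem:monfct-easy} for the monoidal functor $F$ with $V$ equal to $\unit$, $D$ and $D\ot D$ respectively.
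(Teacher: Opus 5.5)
Your proposal is correct and follows essentially the same route as the paper. Both rewrite (\ref{eq:theta-1})--(\ref{eq:theta-3}) using (\ref{eq:monfct-0}), (\ref{eq:monfct-1}), (\ref{eq:monfct-2-l}) and the naturality of $\asscvar'$, cancel the invertible factors, and conclude by the faithfulness in Lemma~\ref{lem:monfct-easy}. The only difference is organizational: you bring $(\theta \star \Xcal_1) \circ (\Ycal_1 \star \theta)$ once into the normal form $F(\assc{C}{C}{-}) \circ \monfctmul{F}{C \ot C}{-} \circ ((\monfctmul{F}{C}{C} \circ (\alpha \ot \alpha)) \ot F(-)) \circ \asscvinv{D}{D}{F(-)}$ and reuse it, which makes part (3) shorter than the paper's inline manipulation.

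One small slip: in part (1) the faithfulness is applied to the morphisms $\monfctuni{F} \circ \eps'$ and $F(\eps) \circ \alpha$ from $D$ to $F(\unit)$, so the relevant object there is $D$, not $\unit$.
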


\begin{proof}
    \begin{enumerate}
        \item Using (\ref{eq:XY-1}), the equality (\ref{eq:theta-1}) is rewritten as
        \[\lucv{F(-)} \circ (\eps' \ot F(-)) = F(\lucvar) \circ F(\eps \ot (-)) \circ \monfctmul{F}{C}{-} \circ (\alpha \ot F(-)).\]
        Since $F(\eps \ot (-)) \circ \monfctmul{F}{C}{-} = \monfctmul{F}{\unit}{-} \circ (F(\eps) \ot F(-))$ by (\ref{eq:monfct-0}), it is rewritten as
        \[\lucv{F(-)} \circ (\eps' \ot F(-)) = F(\lucvar) \circ \monfctmul{F}{\unit}{-} \circ (F(\eps) \ot F(-)) \circ (\alpha \ot F(-)).\]
        Since $\lucv{F(-)} = F(\lucvar) \circ \monfctmul{F}{\unit}{-} \circ (\monfctuni{F} \ot F(-))$ by (\ref{eq:monfct-2-l}) and since $F(\lucvar) \circ \monfctmul{F}{\unit}{-}$ is a natural isomorphism, it is equivalent to the equality
        \[(\monfctuni{F} \ot F(-)) \circ (\eps' \ot F(-)) = (F(\eps) \ot F(-)) \circ (\alpha \ot F(-)),\]
        and the desired equivalence follows from Lemma~\ref{lem:monfct-easy}.
        \item We omit the symbol $\circ$.
        Using (\ref{eq:XY-2}), the equality (\ref{eq:theta-2}) is rewritten as
        \begin{align*}
            & \monfctmul{F}{C}{C \ot (-)} (\alpha \ot F(C \ot (-))) (D \ot \monfctmul{F}{C}{-}) (D \ot (\alpha \ot F(-))) \asscv{D}{D}{F(-)} (\Delta' \ot F(-)) \\
            & \qquad \qquad \qquad \qquad \qquad \qquad \qquad \qquad \qquad \qquad \quad = F(\assc{C}{C}{-}) F(\Delta \ot (-)) \monfctmul{F}{C}{-} (\alpha \ot F(-)),
        \end{align*}
        which using the equalities
        \begin{align*}
            (\alpha \ot F(C \ot (-))) (D \ot \monfctmul{F}{C}{-}) (D \ot (\alpha \ot F(-))) 
            &= \alpha \ot (\monfctmul{F}{C}{-} (\alpha \ot F(-))) \\
            &= (F(C) \ot \monfctmul{F}{C}{-}) (\alpha \ot (\alpha \ot F(-)))
        \end{align*}
        is rewritten as
        \begin{align*}
            & \monfctmul{F}{C}{C \ot (-)} (F(C) \ot \monfctmul{F}{C}{-}) (\alpha \ot (\alpha \ot F(-))) \asscv{D}{D}{F(-)} (\Delta' \ot F(-)) \\
            & \qquad \qquad \qquad \qquad \qquad \qquad \qquad = F(\assc{C}{C}{-}) F(\Delta \ot (-)) \monfctmul{F}{C}{-} (\alpha \ot F(-)).
        \end{align*}
		Since $(\alpha \ot (\alpha \ot F(-))) \asscv{D}{D}{F(-)} = \asscv{F(C)}{F(C)}{F(-)} ((\alpha \ot \alpha) \ot F(-))$ by the naturality of $\asscvar'$, it is rewritten as
        \begin{align*}
            & \monfctmul{F}{C}{C \ot (-)} (F(C) \ot \monfctmul{F}{C}{-}) \asscv{F(C)}{F(C)}{F(-)} ((\alpha \ot \alpha) \ot F(-)) (\Delta' \ot F(-)) \\
            & \qquad \qquad \qquad \qquad \qquad \qquad \qquad \qquad = F(\assc{C}{C}{-}) F(\Delta \ot (-)) \monfctmul{F}{C}{-} (\alpha \ot F(-)).
        \end{align*}
		Since $\monfctmul{F}{C}{C \ot (-)} (F(C) \ot \monfctmul{F}{C}{-}) \asscv{F(C)}{F(C)}{F(-)} = F(\assc{C}{C}{-}) \monfctmul{F}{C \ot C}{-} (\monfctmul{F}{C}{C} \ot F(-))$ by (\ref{eq:monfct-1}) and since $F(\assc{C}{C}{-})$ is a natural isomorphism, it is equivalent to the equality
		\[\monfctmul{F}{C \ot C}{-} (\monfctmul{F}{C}{C} \ot F(-)) ((\alpha \ot \alpha) \ot F(-)) (\Delta' \ot F(-)) = F(\Delta \ot (-)) \monfctmul{F}{C}{-} (\alpha \ot F(-)).\]
		Since $F(\Delta \ot (-)) \monfctmul{F}{C}{-} = \monfctmul{F}{C \ot C}{-} (F(\Delta) \ot F(-))$ by (\ref{eq:monfct-0}) and since $\monfctmul{F}{C \ot C}{-}$ is a natural isomorphism, it is equivalent to the equality
		\[(\monfctmul{F}{C}{C} \ot F(-)) ((\alpha \ot \alpha) \ot F(-)) (\Delta' \ot F(-)) = (F(\Delta) \ot F(-)) (\alpha \ot F(-)),\]
        and the desired equivalence follows from Lemma~\ref{lem:monfct-easy}.
        \item We omit the symbol $\circ$.
        Using (\ref{eq:XY-3}), the equality (\ref{eq:theta-3}) is rewritten as
        \begin{align*}
            & \monfctmul{F}{C}{C \ot (-)} (\alpha \ot F(C \ot (-))) (D \ot \monfctmul{F}{C}{-}) (D \ot (\alpha \ot F(-))) \asscv{D}{D}{F(-)} (\br' \ot F(-)) \asscvinv{D}{D}{F(-)} \\
            & \quad = F(\assc{C}{C}{-}) F(\br \ot (-)) F(\assc{C}{C}{-}^{-1}) \monfctmul{F}{C}{C \ot (-)} (\alpha \ot F(C \ot (-))) (D \ot \monfctmul{F}{C}{-}) (D \ot (\alpha \ot F(-))),
        \end{align*}
        which using the equalities
        \[(\alpha \ot F(C \ot (-))) (D \ot \monfctmul{F}{C}{-}) 
        = \alpha \ot \monfctmul{F}{C}{-} 
        = (F(C) \ot \monfctmul{F}{C}{-}) (\alpha \ot (F(C) \ot F(-)))\]
        is rewritten as
        \begin{align*}
            & \monfctmul{F}{C}{C \ot (-)} (F(C) \ot \monfctmul{F}{C}{-}) (\alpha \ot (F(C) \ot F(-))) (D \ot (\alpha \ot F(-))) \asscv{D}{D}{F(-)} (\br' \ot F(-)) \asscvinv{D}{D}{F(-)} \\
            & \quad = F(\assc{C}{C}{-}) F(\br \ot (-)) F(\assc{C}{C}{-}^{-1}) \monfctmul{F}{C}{C \ot (-)} (\alpha \ot F(C \ot (-))) (D \ot \monfctmul{F}{C}{-}) (D \ot (\alpha \ot F(-))).
        \end{align*}
        Since
        \begin{align*}
            & (\alpha \ot (F(C) \ot F(-))) (D \ot (\alpha \ot F(-))) \asscv{D}{D}{F(-)} \\
            & \qquad \qquad = (\alpha \ot (\alpha \ot F(-))) \asscv{D}{D}{F(-)} 
            = \asscv{F(C)}{F(C)}{F(-)} ((\alpha \ot \alpha) \ot F(-)),
        \end{align*}
        by the naturality of $\asscvar'$, it is rewritten as
        \begin{align*}
            & \monfctmul{F}{C}{C \ot (-)} (F(C) \ot \monfctmul{F}{C}{-}) \asscv{F(C)}{F(C)}{F(-)} ((\alpha \ot \alpha) \ot F(-)) (\br' \ot F(-)) \asscvinv{D}{D}{F(-)} \\
            & \quad = F(\assc{C}{C}{-}) F(\br \ot (-)) F(\assc{C}{C}{-}^{-1}) \monfctmul{F}{C}{C \ot (-)} (\alpha \ot F(C \ot (-))) (D \ot \monfctmul{F}{C}{-}) (D \ot (\alpha \ot F(-))).
        \end{align*}
        Since $\monfctmul{F}{C}{C \ot (-)} (F(C) \ot \monfctmul{F}{C}{-}) \asscv{F(C)}{F(C)}{F(-)} = F(\assc{C}{C}{-}) \monfctmul{F}{C \ot C}{-} (\monfctmul{F}{C}{C} \ot F(-))$ by (\ref{eq:monfct-1}) and since $F(\assc{C}{C}{-})$ is a natural isomorphism, it is equivalent to the equality
        \begin{align*}
            & \monfctmul{F}{C \ot C}{-} (\monfctmul{F}{C}{C} \ot F(-)) ((\alpha \ot \alpha) \ot F(-)) (\br' \ot F(-)) \asscvinv{D}{D}{F(-)} \\
            & \quad = F(\br \ot (-)) F(\assc{C}{C}{-}^{-1}) \monfctmul{F}{C}{C \ot (-)} (\alpha \ot F(C \ot (-))) (D \ot \monfctmul{F}{C}{-}) (D \ot (\alpha \ot F(-))).
        \end{align*}
        Since
        \[D \ot (\alpha \ot F(-)) = (D \ot (\alpha \ot F(-))) \asscv{D}{D}{F(-)} \asscvinv{D}{D}{F(-)} = \asscv{D}{F(C)}{F(-)} ((D \ot \alpha) \ot F(-)) \asscvinv{D}{D}{F(-)}\]
        by the naturality of $\asscvar'$ and since $\asscvinv{D}{D}{F(-)}$ is a natural isomorphism, it is equivalent to the equality
        \begin{align*}
            & \monfctmul{F}{C \ot C}{-} (\monfctmul{F}{C}{C} \ot F(-)) ((\alpha \ot \alpha) \ot F(-)) (\br' \ot F(-)) \\
            & \quad = F(\br \ot (-)) F(\assc{C}{C}{-}^{-1}) \monfctmul{F}{C}{C \ot (-)} (\alpha \ot F(C \ot (-))) (D \ot \monfctmul{F}{C}{-}) \asscv{D}{F(C)}{F(-)} ((D \ot \alpha) \ot F(-)).
        \end{align*}
        Since
        \begin{align*}
            & (\alpha \ot F(C \ot (-))) (D \ot \monfctmul{F}{C}{-}) \asscv{D}{F(C)}{F(-)} ((D \ot \alpha) \ot F(-)) \\
            & \qquad = (\alpha \ot \monfctmul{F}{C}{-}) \asscv{D}{F(C)}{F(-)} ((D \ot \alpha) \ot F(-)) \\
            & \qquad = (F(C) \ot \monfctmul{F}{C}{-}) (\alpha \ot (F(C) \ot F(-))) \asscv{D}{F(C)}{F(-)} ((D \ot \alpha) \ot F(-)) \\
            & \qquad = (F(C) \ot \monfctmul{F}{C}{-}) \asscv{F(C)}{F(C)}{F(-)} ((\alpha \ot F(C)) \ot F(-)) ((D \ot \alpha) \ot F(-)) \\
            & \qquad = (F(C) \ot \monfctmul{F}{C}{-}) \asscv{F(C)}{F(C)}{F(-)} ((\alpha \ot \alpha) \ot F(-))
        \end{align*}
        by the naturality of $\asscvar'$, it is rewritten as
        \begin{align*}
            & \monfctmul{F}{C \ot C}{-} (\monfctmul{F}{C}{C} \ot F(-)) ((\alpha \ot \alpha) \ot F(-)) (\br' \ot F(-)) \\
            & \qquad = F(\br \ot (-)) F(\assc{C}{C}{-}^{-1}) \monfctmul{F}{C}{C \ot (-)} (F(C) \ot \monfctmul{F}{C}{-}) \asscv{F(C)}{F(C)}{F(-)} ((\alpha \ot \alpha) \ot F(-)).
        \end{align*}
        Using $\monfctmul{F}{C}{C \ot (-)} (F(C) \ot \monfctmul{F}{C}{-}) \asscv{F(C)}{F(C)}{F(-)} = F(\assc{C}{C}{-}) \monfctmul{F}{C \ot C}{-} (\monfctmul{F}{C}{C} \ot F(-))$ again, it is rewritten as
        \begin{align*}
            & \monfctmul{F}{C \ot C}{-} (\monfctmul{F}{C}{C} \ot F(-)) ((\alpha \ot \alpha) \ot F(-)) (\br' \ot F(-)) \\
            & \qquad \qquad = F(\br \ot (-)) \monfctmul{F}{C \ot C}{-} (\monfctmul{F}{C}{C} \ot F(-)) ((\alpha \ot \alpha) \ot F(-)).
        \end{align*}
        Since $F(\br \ot (-)) \monfctmul{F}{C \ot C}{-} = \monfctmul{F}{C \ot C}{-} (F(\br) \ot F(-))$ by (\ref{eq:monfct-0}) and since $\monfctmul{F}{C \ot C}{-}$ is a natural isomorphism, it is equivalent to the equality
        \begin{align*}
            & (\monfctmul{F}{C}{C} \ot F(-)) ((\alpha \ot \alpha) \ot F(-)) (\br' \ot F(-)) \\
            & \qquad \qquad = (F(\br) \ot F(-)) (\monfctmul{F}{C}{C} \ot F(-)) ((\alpha \ot \alpha) \ot F(-)),
        \end{align*}
        and the desired equivalence follows from Lemma~\ref{lem:monfct-easy}.
    \end{enumerate}
	\vspace{-1\baselineskip}
\end{proof}

Our main result of Subsection~\ref{subsec:4.2} can be stated as follows:

\begin{thm}
    \label{thm:mor-4}
    Let $\cat{C}$ and $\cat{D}$ be monoidal categories, $\brobj{C}$ and $\brobjv{D}$ braided coalgebras in $\cat{C}$ and $\cat{D}$, respectively, and $\monfctvar{F}{C}{D}$ a monoidal functor.
    Then, the assignment
    \begin{equation}
		\label{eq:thm}
		\vareta \mapsto \rucv{F(C)} \circ (F(C) \ot \monfctuni{F}^{-1}) \circ \monfctmul{F}{C}{\unit}^{-1} \circ (\vareta[1])_{\unit} \circ (D \ot \monfctuni{F}) \circ \rucvinv{D}
	\end{equation}
	induces a bijective correspondence between
    \begin{enumerate}[label=\upshape{(\roman*)}, widest=iii]
        \item \label{enum:class-1} \addtocounter{enumi}{1} collections $\vareta$ of natural transformations such that the pair $(F, \vareta)$ is a relative morphism from $\wTcalC$ to $\wTcalD$, and $\vareta$ satisfies condition \emph{($**$)}, and
        \item \label{enum:class-3} morphisms $\func[\alpha]{D}{F(C)}$ in $\cat{D}$ which satisfies \emph{(\ref{eq:alpha-1})}--\emph{(\ref{eq:alpha-3})}.
    \end{enumerate}
\end{thm}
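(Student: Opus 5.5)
The plan is to compose three bijections already established. First, by Proposition~\ref{prop:mor-3} applied to $\sobjcal{X} = \wTcalC$ and $\sobjcal{Y} = \wTcalD$, the assignment $\vareta \mapsto \vareta[1]$ identifies collections $\vareta$ for which $(F, \vareta)$ is a relative morphism with natural transformations $\func[\theta]{\Ycal_1 \circ F}{F \circ \Xcal_1}$ satisfying (\ref{eq:theta-1})--(\ref{eq:theta-3}). Condition ($**$) involves only $\vareta[1]$. Hence this bijection restricts to one between the collections in class \ref{enum:class-1} and the natural transformations $\theta$ that satisfy (\ref{eq:theta-1})--(\ref{eq:theta-3}) and are of the form $\theta = \monfctmul{F}{C}{-} \circ (\alpha \ot F(-))$ for some $\func[\alpha]{D}{F(C)}$.

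Second, I will parametrize such $\theta$ by $\alpha$. The map $\alpha \mapsto \monfctmul{F}{C}{-} \circ (\alpha \ot F(-))$ is injective: the functor $V \mapsto V \ot F(-)$ is faithful by Lemma~\ref{lem:monfct-easy}, and postcomposition with the natural isomorphism $\monfctmul{F}{C}{-}$ is bijective. So the $\theta$ of the required form correspond bijectively to morphisms $\func[\alpha]{D}{F(C)}$. By Lemma~\ref{lem:alpha}, $\theta$ satisfies (\ref{eq:theta-1})--(\ref{eq:theta-3}) exactly when $\alpha$ satisfies (\ref{eq:alpha-1})--(\ref{eq:alpha-3}). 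Composing the two bijections therefore gives a bijection between class \ref{enum:class-1} and class \ref{enum:class-3}, sending $\vareta$ to the unique $\alpha$ with $\vareta[1] = \monfctmul{F}{C}{-} \circ (\alpha \ot F(-))$.

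Finally, I will check that this $\alpha$ is given by formula (\ref{eq:thm}), i.e.\ by (\ref{eq:alpha}). Evaluate $\vareta[1] = \monfctmul{F}{C}{-} \circ (\alpha \ot F(-))$ at $\unit$ and substitute into the right-hand side of (\ref{eq:thm}); the factor $\monfctmul{F}{C}{\unit}^{-1}$ cancels. What remains is
\[\rucv{F(C)} \circ (F(C) \ot \monfctuni{F}^{-1}) \circ (\alpha \ot F(\unit)) \circ (D \ot \monfctuni{F}) \circ \rucvinv{D}
= \rucv{F(C)} \circ (\alpha \ot \unit) \circ \rucvinv{D} = \alpha,\]
where the first equality uses functoriality of $\ot$ and the second uses naturality of $\rucvar'$. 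This is exactly the computation in the proof of Lemma~\ref{lem:monfct-easy}.

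I expect no real obstacle, since the heavy lifting is in Lemma~\ref{lem:alpha} and Proposition~\ref{prop:mor-3}. The one point needing care is bookkeeping: the restriction to ($**$) must be compatible with the bijection of Proposition~\ref{prop:mor-3}, and the inverse map must send $\alpha$ to the collection built from $\theta$ via Lemma~\ref{lem:mor-2-and-3}.
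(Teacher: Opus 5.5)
Your proposal is correct and matches the paper's proof. The paper likewise restricts the bijection of Proposition~\ref{prop:mor-3} to transformations of the form $\monfctmul{F}{C}{-} \circ (\alpha \ot F(-))$, matches these with class (iii) via Lemma~\ref{lem:alpha} and the uniqueness of $\alpha$ from Lemma~\ref{lem:monfct-easy} (formula (\ref{eq:alpha})), and composes the two bijections; your check that (\ref{eq:thm}) recovers $\alpha$ is the same computation that gives (\ref{eq:alpha}).
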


\begin{proof}
    By Proposition~\ref{prop:mor-3}, the assignment $\vareta \mapsto \vareta[1]$ induces a bijective correspondence between \ref{enum:class-1} and
    \begin{enumerate}[label=(\roman*), start=2]
		\item \label{enum:class-2} natural transformations $\func[\theta]{D \ot F(-)}{F(C \ot (-))}$ which satisfies (\ref{eq:theta-1})--(\ref{eq:theta-3}) as well as the condition that there exist a morphism $\func[\alpha]{D}{F(C)}$ in $\cat{D}$ such that $\theta = \monfctmul{F}{C}{-} \circ (\alpha \ot F(-))$,
    \end{enumerate}
	whereas Lemma~\ref{lem:alpha} and the preceding arguments imply that the assignment $\alpha \mapsto \monfctmul{F}{C}{-} \circ (\alpha \ot F(-))$ induces a bijective correspondence between \ref{enum:class-3} and \ref{enum:class-2}, whose inverse is written explicitly as
	\[\theta \mapsto \rucv{F(C)} \circ (F(C) \ot \monfctuni{F}^{-1}) \circ \monfctmul{F}{C}{\unit}^{-1} \circ \theta_{\unit} \circ (D \ot \monfctuni{F}) \circ \rucvinv{D};\]
	see also (\ref{eq:alpha}).
	The desired 
	correspondence between \ref{enum:class-1} and \ref{enum:class-3} is the zigzag of these 
	correspondences.
\end{proof}

\begin{rem}
    \label{rem:mor-of-coalg}
    Let $\cat{C}$ be a monoidal category, and $\brcoalg{C}$ and $\brcoalgv{D}$ braided coalgebras in $\cat{C}$, and consider the strict monoidal functor $\fct[\id{\cat{C}}]{C}{C}$.
    Theorem~\ref{thm:mor-4}, in view of Remark~\ref{rem:abso-mor}, asserts that the correspondence $\vareta \mapsto \ruc{C} \circ (\vareta[1])_{\unit} \circ \ruc{D}^{-1}$ induces a bijective correspondence between
    \begin{itemize}
        \item monoidal natural transformations $\func[\vareta]{\wTcalD}{\wTcalC}$ such that 
        $\vareta[1]$ is written in the form $\alpha \ot (-)$ for some morphism $\func[\alpha]{D}{C}$ in $\cat{C}$, and
        \item morphisms $\func[\alpha]{D}{C}$ in $\cat{C}$ which satisfies the equalities $\eps' = \eps \circ \alpha$, $(\alpha \ot \alpha) \circ \Delta' = \Delta \circ \alpha$, and $(\alpha \ot \alpha) \circ \br' = \br \circ (\alpha \ot \alpha)$; that is, \textit{morphisms $\func[\alpha]{\brobjv{D}}{\brobj{C}}$ of braided coalgebras in $\cat{C}$} in the sense of \cite[Definition 2.1 (3)]{ardizzoni-menini}.
    \end{itemize}
\end{rem}

Remark~\ref{rem:mor-of-coalg} suggests that we refer to morphisms $\func[\alpha]{D}{F(C)}$ in the correspondence of Theorem~\ref{thm:mor-4} as \textit{relative morphisms of braided coalgebras} from $\brobjv{D}$ to $\brobj{C}$ (with respect to $F$).

Using the notation of Theorem~\ref{thm:mor-4}, when the morphisms $\br$ and $\br'$ of the braided coalgebras $\brobj{C}$ and $\brobjv{D}$ are \term{globally defined} in the sense that they come from the braidings on $\cat{C}$ and $\cat{D}$ as in Example~\ref{ex:coalg-br}~\ref{enum:ex-2}, respectively, the hypotheses on $\alpha$ in Theorem~\ref{thm:mor-4} can be weakened as follows:

\begin{cor}
	\label{cor:mor-5}
    Let $\brmoncat{C}$ and $\brmoncatv{D}$ be braided monoidal categories, $C$ and $D$ coalgebras in $\cat{C}$ and $\cat{D}$, and $\brmonfct{F}{C}{D}$ a braided monoidal functor.
    Then, the correspondence \emph{(\ref{eq:thm})} of \emph{Theorem~\ref{thm:mor-4}} induces a bijective correspondence between
    \begin{itemize}
        \item collections $\vareta$ of natural transformations such that the pair $(F, \vareta)$ is a relative morphism from $\wTcal(C, \braid{C}{C})$ to $\wTcal(D, \braidv{D}{D})$ and $\vareta$ satisfies condition \emph{($**$)}, and
        \item morphisms $\func[\alpha]{D}{F(C)}$ in $\cat{D}$ which satisfies \emph{(\ref{eq:alpha-1})} and \emph{(\ref{eq:alpha-2})}.
    \end{itemize}
\end{cor}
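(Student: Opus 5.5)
The plan is to reduce Corollary~\ref{cor:mor-5} to Theorem~\ref{thm:mor-4}, applied to the braided coalgebras $(C, \Delta, \eps, \braid{C}{C})$ and $(D, \Delta', \eps', \braidv{D}{D})$ of Example~\ref{ex:coalg-br}~\ref{enum:ex-2}. Theorem~\ref{thm:mor-4} already gives a bijection between the first class of the corollary and the morphisms $\func[\alpha]{D}{F(C)}$ satisfying (\ref{eq:alpha-1})--(\ref{eq:alpha-3}), with $\br = \braid{C}{C}$ and $\br' = \braidv{D}{D}$. It therefore suffices to show that, in this globally braided situation, (\ref{eq:alpha-3}) holds automatically for every morphism $\alpha$. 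The two target sets then coincide, and the correspondence (\ref{eq:thm}) restricts as claimed.

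To verify (\ref{eq:alpha-3}), I would first use that $F$ is braided. Equation (\ref{eq:brmonfct}) with $X = Y = C$ gives
\[F(\braid{C}{C}) \circ \monfctmul{F}{C}{C} = \monfctmul{F}{C}{C} \circ \braidv{F(C)}{F(C)},\]
so the right-hand side of (\ref{eq:alpha-3}) equals $\monfctmul{F}{C}{C} \circ \braidv{F(C)}{F(C)} \circ (\alpha \ot \alpha)$. Next I would apply naturality of the braiding $\br'$ in $\cat{D}$, namely (\ref{eq:brmoncat-1}) with $f = g = \alpha$:
\[\braidv{F(C)}{F(C)} \circ (\alpha \ot \alpha) = (\alpha \ot \alpha) \circ \braidv{D}{D}.\]
Combining the two identities turns the right-hand side into $\monfctmul{F}{C}{C} \circ (\alpha \ot \alpha) \circ \braidv{D}{D}$, which is exactly the left-hand side of (\ref{eq:alpha-3}).

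There is no real obstacle here. The only point needing care is to invoke (\ref{eq:brmonfct}) with the correct orientation, so that the braiding in $\cat{D}$ appears on the components $F(C)$.
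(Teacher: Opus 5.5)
Your proposal is correct and matches the paper's proof: both reduce to Theorem~\ref{thm:mor-4} for the braided coalgebras of Example~\ref{ex:coalg-br}~\ref{enum:ex-2}. Both then show that (\ref{eq:alpha-3}) holds for every $\alpha$, by applying naturality of $\br'$ (\ref{eq:brmoncat-1}) and then the braided condition (\ref{eq:brmonfct}) with $X = Y = C$.
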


\begin{proof}
    It suffices to prove that (\ref{eq:alpha-3}) is automatically satisfied in this case for any morphism $\func[\alpha]{D}{F(C)}$ in $\cat{D}$, which can be verified as follows:
    \begin{align*}
        \monfctmul{F}{C}{C} \circ (\alpha \ot \alpha) \circ \braidv{D}{D} 
        &= \monfctmul{F}{C}{C} \circ \braidv{F(C)}{F(C)} \circ (\alpha \ot \alpha) 
        = F(\braid{C}{C}) \circ \monfctmul{F}{C}{C} \circ (\alpha \ot \alpha),
    \end{align*}
    where the first and second equalities follow from (\ref{eq:brmoncat-1}) and (\ref{eq:brmonfct}), respectively.
\end{proof}

By replacing $\augbr$ with $\augbrsemi$, we obtain a result which is similar to Corollary~\ref{cor:mor-5}:

\begin{cor}
    \label{cor:mor-6}
    Let $\brmoncat{C}$ and $\brmoncatv{D}$ be braided monoidal categories, $C$ and $D$ coalgebras in $\cat{C}$ and $\cat{D}$, and $\brmonfct{F}{C}{D}$ a braided monoidal functor.
    Then, the correspondence \emph{(\ref{eq:thm})} of \emph{Theorem~\ref{thm:mor-4}} induces a bijective correspondence between
    \begin{itemize}
        \item collections $\vareta$ of natural transformations such that the pair $(F, \vareta)$ is a relative morphism from $\wTcal(C, \braid{C}{C}) \circ \incl$ to $\wTcal(D, \braidv{D}{D}) \circ \incl$ and $\vareta$ satisfies condition \emph{($**$)}, and
        \item morphisms $\func[\alpha]{D}{F(C)}$ in $\cat{D}$ which satisfies \emph{(\ref{eq:alpha-1})}.
    \end{itemize}
\end{cor}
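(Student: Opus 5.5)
The plan is to redo the proof of Theorem~\ref{thm:mor-4} with $\augbrsemi$ in place of $\augbr$, and then use the braided hypothesis, as in Corollary~\ref{cor:mor-5}. Proposition~\ref{prop:mor-2} was stated for $\cat{I}$ equal to either category. So the assignment $\vareta \mapsto \vareta[1]$ is still a bijection between collections satisfying (\ref{eq:mor-2}) and (\ref{eq:mor-3}) and natural transformations $\func[\theta]{\Ycal'_1 \circ F}{F \circ \Xcal'_1}$. Here $\sobjcalinj{X} = \wTcal(C, \braid{C}{C}) \circ \incl$ and $\sobjcalinj{Y} = \wTcal(D, \braidv{D}{D}) \circ \incl$, and they have the same values on objects and on $\cofacevar$ and $\copermvar$ as before.

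First, I will prove the analogue of Proposition~\ref{prop:mor-3} for $\augbrsemi$. The pair $(F, \vareta)$ is a relative morphism if and only if $\theta$ satisfies (\ref{eq:theta-1}) and (\ref{eq:theta-3}). Necessity is immediate, as in that proof. For sufficiency, Lemma~\ref{lem:mor-1} is stated for general $\cat{I}$, so $\Ieta$ is a conservative monoidal subcategory of $\augbrsemi$ containing $\cofacevar$ and $\copermvar$. Taking tensor products with identities, it contains every $\coface{j}{n}$ and $\coperm{i}{n}$, and by conservativity also every $(\coperm{i}{n})^{-1}$. I then need the analogue of Remark~\ref{rem:generate-augbr} for $\augbrsemi$, namely that every morphism is a composite of generating morphisms. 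The same retraction argument works verbatim, using only the relations imposed in Definition~\ref{defn:augbrsemi}. Hence $\Ieta = \augbrsemi$.

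Second, I will impose condition ($**$) and write $\theta = \monfctmul{F}{C}{-} \circ (\alpha \ot F(-))$. Lemma~\ref{lem:alpha} translates (\ref{eq:theta-1}) into (\ref{eq:alpha-1}) and (\ref{eq:theta-3}) into (\ref{eq:alpha-3}). Its proof only uses the formulas (\ref{eq:XY-0})--(\ref{eq:XY-3}), which are unchanged after restricting along $\incl$. The uniqueness of $\alpha$ and the explicit inverse (\ref{eq:alpha}) come from Lemma~\ref{lem:monfct-easy}, exactly as in Theorem~\ref{thm:mor-4}. Composing these bijections shows that (\ref{eq:thm}) gives a bijection onto the morphisms $\alpha$ satisfying (\ref{eq:alpha-1}) and (\ref{eq:alpha-3}).

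Finally, the computation in the proof of Corollary~\ref{cor:mor-5} shows that (\ref{eq:alpha-3}) holds automatically for any $\alpha$. That computation uses the naturality (\ref{eq:brmoncat-1}) and the fact that $F$ is braided (\ref{eq:brmonfct}). This leaves only (\ref{eq:alpha-1}). The one step needing actual care is the generation claim for $\augbrsemi$ together with the check that $\Ieta$ is closed under inverses. Everything else is a transcription of earlier arguments with $\codegevar$ removed.
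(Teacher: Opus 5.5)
Your proposal is correct and takes essentially the paper's approach. The paper gives no separate proof beyond saying that one replaces $\augbr$ by $\augbrsemi$: you rerun Proposition~\ref{prop:mor-3} with only $\cofacevar$ and $\copermvar$ as generators, so (\ref{eq:theta-2}) and hence (\ref{eq:alpha-2}) drop out, and you discard (\ref{eq:alpha-3}) exactly as in Corollary~\ref{cor:mor-5}.
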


\begin{rem}
    \label{rem:rel-isom}
    We use the notation of Theorem~\ref{thm:mor-4}.
    Let $\vareta = \famnat{\func[\vareta[n]]{\Ycal_n \circ F}{F \circ \Xcal_n}}$ be a collection of natural transformations such that the pair $(F, \vareta)$ is a relative morphism $\func{\wTcalC}{\wTcalD}$, and we write $\alpha$ for the morphism $\func{D}{F(C)}$ in $\cat{D}$ that corresponds to $\vareta$ via the bijective correspondence (\ref{eq:thm}).
    Then, it is not difficult to show that $\vareta[n]$ is a natural isomorphism for all $n \ge 0$ if and only if $\alpha$ is an isomorphism in $\cat{D}$.
    Indeed, the necessity follows directly from (\ref{eq:thm}).
    Suppose that $\alpha$ is an isomorphism in $\cat{D}$.
    Then, $\eta_1 = \monfctmul{F}{C}{-} \circ (\alpha \ot F(-))$ is a natural isomorphism.
    Using (\ref{eq:mor-3}), one can prove by induction 
    that $\vareta[n]$ is a natural isomorphism for all $n \ge 0$.
\end{rem}

\subsection{Application: Quasi-triangular bialgebras}
\label{subsec:4.3}

In this subsection, we restrict our attention to the study of $\Bbbk$-linear braided monoidal categories which arise from quasi-triangular $\Bbbk$-bialgebras.
We briefly review the basic definitions here, and refer to Etingof--Gelaki--Nikshych--Ostrik \cite[Section 8.3]{etingof-gelaki-nikshych-ostrik} or Kassel \cite[Section VIII.2]{kassel} for a detailed account of this theory.
We write $\ordot \coloneqq \ordot_{\Bbbk}$ and ${\dim} \coloneqq {\dim_{{\myspace} \Bbbk}}$.
Recall from Notation~\ref{nota:V} that $\cat{V}$ denotes the category of $\Bbbk$-modules, which we regard as a $\Bbbk$-linear monoidal category with respect to the usual monoidal structure $(\ordot, \Bbbk)$.

\begin{nota}
    \label{nota:alg}
    \begin{enumerate}
        \item \label{enum:AMod} Let $A$ be a $\Bbbk$-algebra.
        We write $\AMod$ for the $\Bbbk$-linear category of (left) $A$-modules, and we let $\ff[A]$ or $\ff$ denote the forgetful functor $\func{\AMod}{\cat{V}}$.
        \item \label{enum:Amod} Let $A$ be a finite-dimensional algebra over a field $\Bbbk$.
        We write $\Amod$ for the full subcategory of $\AMod$ consisting of finitely generated $A$-modules.
    \end{enumerate}
\end{nota}

\begin{defn}[{cf.~\cite[Definition III.2.2]{kassel}}]
    \label{defn:bialg}
    A triple $\bialg{A}$ is called a \term{$\Bbbk$-bialgebra} if $A$ is a $\Bbbk$-algebra and $\func[\Delta]{A}{A \ot A}$ and $\func[\eps]{A}{\Bbbk}$ are morphisms of $\Bbbk$-algebras such that $\coalg{\ff(A)}$ is a coalgebra in $\cat{V}$.
\end{defn}

Given a $\Bbbk$-bialgebra $\bialg{A}$, the category $\AMod$ of Notation~\ref{nota:alg}~\ref{enum:AMod} admits a monoidal structure $(\ordot, \Bbbk)$ such that the forgetful functor $\func[\ff]{\vMod{A}}{\cat{V}}$ admits a monoidal structure in the obvious way, where for all $A$-modules $M$ and $N$, $M \ot N$ is regarded as an $A$-module via  $a (m \ot n) \coloneqq \Delta(a) (m \ot n)$ for all $a \in A$, $m \in M$, and $n \in N$, and $\Bbbk$ is regarded as an $A$-module via $a \lambda \coloneqq \eps(a) \lambda$ for all $a \in A$ and $\lambda \in \Bbbk$ (see \cite[Proposition XI.3.1]{kassel} or Witherspoon \cite[page 186]{witherspoon}, for details).

\begin{defn}[{cf.~\cite[Definitions VIII.2.1 and VIII.2.2]{kassel}}]
    \label{defn:qtb}
    A quadruple $\qtribialg{A}$ is called a \term{quasi-triangular $\Bbbk$-bialgebra} if $\bialg{A}$ is a $\Bbbk$-bialgebra, $R = \swvar{1} \ot \swvar{2}$ is a unit element in $A \ot A$ (where the summation symbol is suppressed), such that the equality $(\tau \circ \Delta) (a) = R \Delta (a) R^{-1}$ holds for all $a \in A$, and the equalities $(\Delta \ot A) (R) = \swvar{1} \ot \swvar{1} \ot (\swvar{2} \swvar{2})$ and $(A \ot \Delta) (R) = (\swvar{1} \swvar{1}) \ot \swvar{2} \ot \swvar{2}$ hold, where $\func[\tau]{A \ot A}{A \ot A}[a \ot a'][a' \ot a]$ denotes the flip morphism.
    In this case, the element $R \in A \ot A$ is referred to as a (\textit{universal}) \textit{R-matrix} of $A$.
\end{defn}

We will often abbreviate a quasi-triangular $\Bbbk$-bialgebra $\qtribialg{A}$ to $\qtb{A}$.

\begin{rem}
    \label{rem:qtb-to-brcoalg}
    Let $\qtribialg{A}$ be a quasi-triangular $\Bbbk$-bialgebra.
    Then, the element $R = \swvar{1} \ot \swvar{2}$ gives a braiding $\braidtrivar$ on $\AMod$, where for all $A$-modules $M$ and $N$, the isomorphism $\func[\braidtri{M}{N}]{M \ot N}{N \ot M}$ 
    is defined by $\braidtri{M}{N}(m \ot n) \coloneqq (\swvar{2} n) \ot (\swvar{1} m)$ for all $m \in M$ and $n \in N$ (see \cite[Proposition XV.2.2 (a)]{kassel} for a proof).
    Since $\coalg{A}$ is a coalgebra in $\AMod$, it follows that $(A, \braidtri{A}{A}) = (A, \Delta, \eps, \braidtri{A}{A})$ is a braided coalgebra in $\AMod$ (see Example~\ref{ex:coalg-br}~\ref{enum:ex-2}).
\end{rem}

\begin{defn}
    \label{defn:KBA}
    Let $\qtb{A}$ be a quasi-triangular $\Bbbk$-bialgebra, and $M$ an $A$-module.
    We write
    \begin{gather*}
        \wTcal\qtb{A} \coloneqq \wTcal(A, \braidtri{A}{A}); \quad 
        \wT\qtb{A} \coloneqq \wT(A, \braidtri{A}{A}); \quad 
        \K{A, R} \coloneqq \K{A, \braidtri{A}{A}}; \\
        \KBA \coloneqq \KB{A, \braidtri{A}{A}}; \quad 
        \HB{A, R} \coloneqq \HB{A, \braidtri{A}{A}},
    \end{gather*}
    where the right hand sides are constructed from the braided coalgebra $(A, \braidtri{A}{A})$ (and an object $M$) in $\AMod$ as in Proposition~\ref{prop:nonstr-augbrobj} and Definition~\ref{defn:br-cpx}~\ref{br-cpx:ABSO}--\ref{br-cpx:br-coho}.
    We refer to $\KBA$ and $\HB{A, R}$ as the \textit{braided cochain complex} and the \textit{braided cohomology of $\qtb{A}$ with coefficients in $M$}, respectively.
\end{defn}

\begin{rem}
    We use the notation of Definition~\ref{defn:qtb}.
    Recall that a quasi-triangular $\Bbbk$-bialgebra $\qtb{A}$ is said to be \term{triangular} if $R^{-1} = R_2 \ot R_1$ (see \cite[Definition 8.3.3]{etingof-gelaki-nikshych-ostrik}).
    In this case, the equality $(\braidtrivar)^2 = \id{}$ holds, and given an $A$-module $M$, the cochain complex $\KB{A, R}$ and its cohomology $\HB{A, R}$ are constructed using the representations $\func{\symmgrp}{\Aut{\cat{V}}{\K[n + 1]{A, R}}}$ of the symmetric groups (see Remark~\ref{rem:HS}), and are denoted by $\KS{A, R}$ and $\HS{A, R}$, which we call the \textit{symmetric cochain complex} and the \textit{symmetric cohomology of $\qtb{A}$ with coefficients in $M$}, respectively.
    In particular, when a $\Bbbk$-bialgebra $\bialg{A}$ is \term{cocommutative} in the sense that $\tau \circ \Delta = \Delta$ holds (see \cite[Definition III.1.1 (a)]{kassel}), where $\tau$ denotes the flip morphism of Definition~\ref{defn:qtb}, the pair $(A, 1 \ot 1)$ is a triangular $\Bbbk$-bialgebra, and the cochain complex $\KS{A, 1 \ot 1}$ is canonically identified (by evident $\symmgrp$-equivariant isomorphisms) with the homogeneous cochain complex $\KS{A}$, defined in \cite[page 12]{shiba-sanada-itaba}, which is isomorphic to the cochain complex $\KS{G}$ of \cite[Definition 3.1]{pirashvili-A} for a group $G$, when $\Bbbk = \mathbb{Z}$ 
    and $A$ is the group algebra of $G$ over $\Bbbk$.
    In particular, Proposition~\ref{prop:KB} contains \cite[Proposition 3.2]{shiba-sanada-itaba} as a special case.
\end{rem}

We now give an explicit description of the {\ABSO} $\wT\qtb{A}$, in order to highlight that the braided cochain complex $\KBA$ and the braided cohomology $\HB{A, R}$ in the sense of Definition~\ref{defn:KBA} are, up to isomorphism, generalizations of the homogeneous cochain complex and the symmetric cohomology of cocommutative $\Bbbk$-Hopf algebras, defined as in \cite{shiba-sanada-itaba}.

\begin{rem}
    \label{rem:explicit}
    Let $\qtribialg{A}$ be a quasi-triangular $\Bbbk$-bialgebra.
    Using Remark~\ref{rem:wTC}, $\wT\qtb{A}$ can be identified with the {\ABSO} which is determined from the collection $\famnat{A^{\ot {\myspace} n}}$ of $A$-modules, together with the collections $\family{\face{j}{n}}{0 \le j \le n}$, $\family{\dege{j}{n}}{0 \le j \le n}$, and $\family{\perm{i}{n}}{1 \le i \le n}$ of $A$-linear maps, which are defined by the equalities (\ref{eq:face}), (\ref{eq:dege}), and (\ref{eq:perm}), which in this case admit more concrete forms as
    \begin{align}
        \face{j}{n}(a_0 \ot \cdots \ot a_n) &= \eps(a_j) (a_0 \ot \cdots \ot a_{j - 1} \ot a_{j + 1} \ot \cdots \ot a_n); \nonumber \\
        \dege{j}{n}(a_0 \ot \cdots \ot a_n) &= a_0 \ot \cdots \ot a_{j - 1} \ot \sw{a_j}{1} \ot \sw{a_j}{2} \ot a_{j + 1} \ot \cdots \ot a_n; \nonumber \\
        \perm{i}{n}(a_0 \ot \cdots \ot a_n) &= a_0 \ot \cdots \ot a_{i - 2} \ot (\swvar{2} a_i) \ot (\swvar{1} a_{i - 1}) \ot a_{i + 1} \ot \cdots \ot a_n, \label{eq:perm-qtb}
    \end{align}
    where we used the Sweedler notation $\Delta(a_j) \coloneqq \sw{a_j}{1} \ot \sw{a_j}{2}$.
    In particular, when $\bialg{A}$ is cocommutative and $R = 1 \ot 1$, the right hand side of (\ref{eq:perm-qtb}) is simplified as $a_0 \ot \cdots \ot a_{i - 2} \ot a_i \ot a_{i - 1} \ot a_{i + 1} \ot \cdots \ot a_n$, and this already appears in the right hand side of \cite[(3.1.1)]{pirashvili-A} and that of \cite[(3.2)]{shiba-sanada-itaba}.
\end{rem}

Our main result asserts that the braided cochain complex of quasi-triangular bialgebras is a braided Morita invariant under some condition:

\begin{thm}
    \label{thm:Morita}
    Let $\qtb{A} = \qtribialg{A}$ and $\qtbv{B} = \qtribialgv{B}$ be quasi-triangular $\Bbbk$-bialgebras, and suppose that $\func[\monfct{F}]{(\AMod, \braidtrivar)}{(\BMod, \braidtrivarv)}$ is a $\Bbbk$-linear braided monoidal equivalence such that $\ff[B] \circ F \iso \ff[A]$ as $\Bbbk$-linear functors.
    Then, there exists an isomorphism $\KBA \iso \KB{B, R'}[F(M)]$ 
    for all $A$-modules $M$.
    In particular, the braided cohomology of quasi-triangular $\Bbbk$-bialgebras is invariant under braided Morita equivalences which commute with the forgetful functors up to $\Bbbk$-linear natural isomorphism.
\end{thm}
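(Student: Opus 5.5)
The plan is to reduce the statement to the criterion worked out just before Subsection~\ref{subsec:4.1}. There it is shown that condition ($*$), and hence an isomorphism $\KBA \iso \KB{B, R'}[F(M)]$ by Lemma~\ref{lem:KB}, follows once we have a collection $\famnat{\func[\vareta[n]]{\Ycal'_n \circ F}{F \circ \Xcal'_n}}$ of natural isomorphisms satisfying (\ref{eq:mor-1}) for all morphisms of $\augbrsemi$. Here $\sobjcalinj{X} = \wTcalA \circ \incl$ and $\sobjcalinj{Y} = \wTcal(B, \braidtriv{B}{B}) \circ \incl$. Since $F$ is $\Bbbk$-linear and fully faithful, Lemma~\ref{lem:k-lin} applies as in that discussion. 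Such a collection is produced by a relative morphism $(F, \vareta)$ from $\sobjcalinj{X}$ to $\sobjcalinj{Y}$ all of whose components are invertible.

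Both braided coalgebras are globally braided in the sense of Example~\ref{ex:coalg-br}~\ref{enum:ex-2}, by Remark~\ref{rem:qtb-to-brcoalg}, and $F$ is braided monoidal. So Corollary~\ref{cor:mor-6} applies: relative morphisms satisfying ($**$) correspond to morphisms $\func[\alpha]{B}{F(A)}$ of $B$-modules satisfying only (\ref{eq:alpha-1}), that is, $\monfctuni{F} \circ \eps' = F(\eps) \circ \alpha$. By Remark~\ref{rem:rel-isom}, whose argument uses only (\ref{eq:mor-3}) and therefore works equally for $\augbrsemi$, all $\vareta[n]$ are isomorphisms exactly when $\alpha$ is an isomorphism. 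The whole task therefore reduces to constructing a $B$-linear isomorphism $\alpha \colon B \to F(A)$ satisfying (\ref{eq:alpha-1}).

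To construct $\alpha$, fix a $\Bbbk$-linear natural isomorphism $\func[\xi]{\ff[B] \circ F}{\ff[A]}$ and a quasi-inverse $G$ of $F$. We have the chain of natural isomorphisms
\[\hom{B}{B}{-} \iso \ff[B] \iso \ff[B] \circ F \circ G \iso \ff[A] \circ G \iso \hom{A}{A}{G(-)} \iso \hom{B}{F(A)}{-},\]
where the last step uses full faithfulness of $F$. The Yoneda lemma then gives a $B$-module isomorphism $\alpha_0 \colon B \to F(A)$. Concretely, one can take $\alpha_0(1) = \xi_A^{-1}(1_A)$, and naturality of $\xi$ identifies $F(f) \circ \alpha_0$ with $\xi^{-1}$ applied to $f(1)$ for every $A$-linear map $f$ out of $A$.

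The main obstacle is (\ref{eq:alpha-1}), because $\xi$ is not assumed monoidal. Taking $f = \eps$, I get $(F(\eps) \circ \alpha_0)(1) = \xi_{\Bbbk}^{-1}(1)$, whereas $(\monfctuni{F} \circ \eps')(1) = \monfctuni{F}(1)$. Both sides are $B$-linear maps $B \to F(\Bbbk)$, so they are determined by these values. Moreover $\xi_{\Bbbk} \circ \monfctuni{F}$ is a $\Bbbk$-linear automorphism of $\Bbbk$, hence multiplication by some unit $u \in \Bbbk^\times$. I would therefore replace $\alpha_0$ by $\alpha \coloneqq u\,\alpha_0$, which is still a $B$-linear isomorphism and satisfies $(F(\eps) \circ \alpha)(1) = u\,\xi_{\Bbbk}^{-1}(1) = \monfctuni{F}(1)$, i.e.\ (\ref{eq:alpha-1}).

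Feeding this $\alpha$ into Corollary~\ref{cor:mor-6} and Remark~\ref{rem:rel-isom} gives the invertible relative morphism, hence condition ($*$), hence the isomorphism of braided cochain complexes. Taking cohomology gives the invariance statement.
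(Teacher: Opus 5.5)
Your proof is correct and follows the paper's argument. You use the same reduction via Corollary~\ref{cor:mor-6} and Remark~\ref{rem:rel-isom} to a $B$-linear isomorphism $\alpha\colon B\to F(A)$ satisfying (\ref{eq:alpha-1}). Your $\alpha_0(b)=b\,\xi_A^{-1}(1_A)$ is the paper's $\xi_A^{-1}\circ\beta$, and your unit $u$ is the paper's $\lambda=(\xi_{\Bbbk}\circ\monfctuni{F})(1)$.

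The only differences are cosmetic. You certify bijectivity of $\alpha_0$ by Yoneda, instead of first extracting an algebra isomorphism $\beta\colon B\to A$ with $F\iso\beta^*$. You also absorb the scalar into $\alpha$ rather than rescaling the monoidal structure to $(\lambda\monfctmulvar{F},\lambda^{-1}\monfctuni{F})$; this is legitimate because over $\augbrsemi$ the only condition imposed on $\alpha$ is the linear one (\ref{eq:alpha-1}).
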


\begin{proof}
    Take a $\Bbbk$-linear natural isomorphism $\func[\xi']{\ff[B] \circ F}{\ff[A]}$.
    It follows from a discussion similar to that after Etingof--Gelaki \cite[Theorem 4.3.1]{etingof-gelaki-A} that there exist an isomorphism $\func[\beta]{B}{A}$ of $\Bbbk$-algebras and a $\Bbbk$-linear natural isomorphism $\func[\xi]{F}{{\beta}^*}$, where $\func[\beta^*]{\vMod{A}}{\BMod}$ denotes the pullback functor induced from $\beta$ (see \cite[Section XI.4, Example 2]{kassel} for the definition).
    Indeed, $\beta$ is given explicitly by $\beta(b) \coloneqq \xi'_A(b \, \xi^{\prime {\myspace} -1}_A(1))$ for all $b \in B$, and $\xi$ is the unique $\Bbbk$-linear natural isomorphism such that $\ff[B] \star \xi = \xi'$.
    Note that $\beta$ can be regarded as a $B$-linear map $\func{B}{\beta^*(A)}$.
    If we set $\lambda \coloneqq (\xi_{{\myspace} \Bbbk} \circ \monfctuni{F}) (1)$, which is a unit in $\Bbbk$ since $\xi_{{\myspace} \Bbbk} \circ \monfctuni{F}$ is an automorphism of $\Bbbk$, 
    then, $(F, \lambda \monfctmulvar{F}, \lambda^{-1} \monfctuni{F})$ is also a $\Bbbk$-linear braided monoidal equivalence; this rescaling of $(\monfctmulvar{F}, \monfctuni{F})$ is taken from the proof of \cite[Theorem 2.2]{ng-schauenburg}.
    We now claim that the $B$-linear map $\func[\alpha \coloneqq \xi_A^{-1} \circ \beta]{B}{F(A)}$ satisfies (\ref{eq:alpha-1}) with respect to this monoidal functor.
    Since we have $F(\eps) \circ \alpha = F(\eps) \circ \xi_{A}^{-1} \circ \beta 
    = \xi_{{\myspace} \Bbbk}^{-1} \circ \eps \circ \beta$ by the naturality of $\xi$, it suffices to prove that the equality $(\lambda^{-1} \monfctuni{F} \circ \eps')(b) = (\xi_{{\myspace} \Bbbk}^{-1} \circ \eps \circ \beta)(b)$ holds for all $b \in B$, which can be verified as follows:
    \begin{align*}
        (\lambda^{-1} \monfctuni{F} \circ \eps') (b) 
        &= \lambda^{-1} \monfctuni{F} (\eps'(b) {\myspace} 1) 
        = \lambda^{-1} \monfctuni{F} (b {\myspace} 1) 
        = \lambda^{-1} (\xi_{{\myspace} \Bbbk}^{-1} \circ \xi_{{\myspace} \Bbbk} \circ \monfctuni{F}) (b {\myspace} 1) 
        = \lambda^{-1} \xi_{{\myspace} \Bbbk}^{-1} (b (\xi_{{\myspace} \Bbbk} \circ \monfctuni{F}) (1)) \\
        &= \lambda^{-1} \xi_{{\myspace} \Bbbk}^{-1} (\lambda b {\myspace} 1) 
        = \xi_{{\myspace} \Bbbk}^{-1} (b {\myspace} 1) 
        = \xi_{{\myspace} \Bbbk}^{-1} (\beta(b) {\myspace} 1) 
        =\xi_{{\myspace} \Bbbk}^{-1} ((\eps \circ \beta) (b) {\myspace} 1) 
        = (\xi_{{\myspace} \Bbbk}^{-1} \circ \eps \circ \beta)(b).
    \end{align*}
    Since $\alpha$ is an isomorphism, it follows from Corollary~\ref{cor:mor-6} (see also Remark~\ref{rem:rel-isom}) that there is a collection $\vareta$ of natural isomorphisms such that $(F, \vareta)$ is a relative morphism $\func{\wTcal\qtb{A} \circ \incl}{\wTcal\qtbv{B} \circ \incl}$.
    By an argument after Definition~\ref{defn:lin-mon-eqv}, this gives the desired isomorphism $\KBA \iso \KB{B, R'}[F(M)]$.
\end{proof}

We now focus on the finite-dimensional case.
Let $\Bbbk$ be a field, and $\qtb{A}$ a finite-dimensional quasi-triangular $\Bbbk$-bialgebra.
In this case, $A$ is contained in the monoidal full subcategory $\Amod$ of $\AMod$, defined as in Notation~\ref{nota:alg}~\ref{enum:Amod}, and $\braidtrivar$ restricts to a braiding on $\Amod$ (also denoted $\braidtrivar$), and we thus obtain a braided coalgebra $(A, \braidtri{A}{A})$ in $\Amod$.

\begin{cor}
    \label{cor:Morita}
    Let $\qtb{A}$ and $\qtbv{B}$ be finite-dimensional quasi-triangular bialgebras over a field $\Bbbk$, and suppose that $\func[F]{(\Amod, \braidtrivar)}{(\Bmod, \braidtrivarv)}$ is a $\Bbbk$-linear braided monoidal equivalence.
    Then, there exists an isomorphism $\KBA \iso \KB{B, R'}[F(M)]$ 
    for all finitely generated $A$-modules $M$.
    In particular, the braided cohomology of finite-dimensional quasi-triangular bialgebras over a field is a braided Morita invariant.
\end{cor}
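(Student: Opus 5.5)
We will deduce the corollary from Theorem~\ref{thm:Morita}. The plan is first to extend the given equivalence $F$ on finitely generated modules to a $\Bbbk$-linear braided monoidal equivalence $\widetilde{F} \colon (\AMod, \braidtrivar) \to (\BMod, \braidtrivarv)$ that commutes with the forgetful functors. Then we compare the complexes computed in $\AMod$ with those computed in $\Amod$.

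\textbf{Step 1: extending $F$.} Since $A$ is finite-dimensional, every $A$-module is the filtered colimit of its finitely generated submodules, and $\AMod \simeq \mathrm{Ind}(\Amod)$. So $F$ extends essentially uniquely to a $\Bbbk$-linear functor $\widetilde{F}$ that preserves filtered colimits, and $\widetilde{F}$ is again an equivalence, extended from a quasi-inverse of $F$. Because $\ot_{\Bbbk}$ commutes with filtered colimits in each variable, the monoidal structure and the braiding extend as well; this is the role of Lemma~\ref{lem:extend-braided}. Thus $\widetilde{F}$ is a braided monoidal equivalence.

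\textbf{Step 2: compatibility with forgetful functors.} Next we check $\ff[B] \circ \widetilde{F} \iso \ff[A]$. The regular module $A$ lies in $\Amod$, and $F(A)$ is a progenerator of $\Bmod$ with $\mathrm{End}(F(A)) \iso \op{A}$. Moreover $\ff[A] \iso \linhom{\AMod}{A}{-}$ and $\ff[B] \iso \linhom{\BMod}{B}{-}$. The key point is that an equivalence of finite tensor categories preserves Frobenius–Perron dimensions and sends $\Bbbk$ to $\Bbbk$, so we must show $F(A) \iso B$ as $B$-modules.

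We would argue this as follows. The regular object is characterized up to multiplicity as $\bigoplus_i P_i^{\dim L_i}$, where $P_i$ are the projective covers of the simples $L_i$. The dimensions $\dim L_i$ can be read off intrinsically through the monoidal structure. Since $\ff$ is the fibre functor and $A$ is a Hopf-like bialgebra, the regular module is distinguished by $X \ot A \iso A^{\dim X}$; one then transports this property across $F$.

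This step is the main obstacle: for a bialgebra without antipode the property $X \ot A \iso A^{\oplus \dim X}$ may fail. The first fallback is to replace the fibre-functor comparison by a bialgebra twist. One would then apply the argument of Theorem~\ref{thm:Morita} directly, with $\alpha \colon B \to \widetilde{F}(A)$ any $B$-linear isomorphism satisfying (\ref{eq:alpha-1}) after the rescaling of $\monfctuni{F}$; this only requires $\widetilde{F}(A) \iso B$ compatibly with the counits.

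\textbf{Step 3: conclusion.} Applying Theorem~\ref{thm:Morita} to $\widetilde{F}$ gives $\KBA \iso \KB{B, R'}[\widetilde{F}(M)]$, and $\widetilde{F}(M) \iso F(M)$ for finitely generated $M$. Finally, the complexes built in $\Amod$ and in $\AMod$ coincide. The reason is that $A^{\ot n}$ and $M$ are finitely generated, the inclusion $\Amod \hookrightarrow \AMod$ is fully faithful and strict monoidal, and the functors $\wTcal$ agree when evaluated at $\unit$. Taking cohomology yields the invariance statement.
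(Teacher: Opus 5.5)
Your reduction matches the paper's. You extend $F$ to $\widetilde{F}$ on all modules via Lemma~\ref{lem:extend-braided}. You observe that $U_A\cong\mathrm{Hom}_A(A,-)$ and $U_B\cong\mathrm{Hom}_B(B,-)$, so $U_B\circ\widetilde{F}\cong U_A$ follows once $\widetilde{F}(A)\cong B$, using full faithfulness of $\widetilde{F}$. Then you apply Theorem~\ref{thm:Morita}.

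The gap is that the step you call the main obstacle, $F(A)\cong B$ as $B$-modules, is never established. That step is the whole content of the corollary beyond the two cited results, and none of your suggestions closes it:
\begin{itemize}
\item The claim that the dimensions of the simples ``can be read off intrinsically through the monoidal structure'' is exactly what has to be proved.
\item The characterization $X\otimes A\cong A^{\oplus\dim X}$ needs an antipode, as you note, so it does not cover bialgebras. For Hopf algebras it would work: taking dimensions in $F(X)\otimes F(A)\cong F(A)^{\oplus\dim X}$ gives $\dim F(X)=\dim X$.
\item The twist fallback is circular, since by your own description it ``only requires $\widetilde{F}(A)\cong B$''.
\item Preservation of Frobenius--Perron dimensions for finite tensor categories does not apply verbatim, because there is no rigidity here. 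Even if it did, you would still need that the Frobenius--Perron dimension of a simple module equals its $\Bbbk$-dimension.
\end{itemize}

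The missing argument is a Frobenius--Perron one, and it is what the paper uses. Let $S_1,\dots,S_m$ be the simple $A$-modules. The function $\dim$ is additive on short exact sequences, multiplicative on $\otimes$, and satisfies $\dim\Bbbk=1$. So it is a ring homomorphism from the Grothendieck ring of finitely generated $A$-modules to $\mathbb{Z}$, positive on the basis $[S_j]$. Let $N_i$ be the nonnegative integer matrix with entries $[S_i\otimes S_k:S_j]$. Then $(\dim S_j)_j$ is a strictly positive left eigenvector of $N_i$ with eigenvalue $\dim S_i$. By the Frobenius--Perron theorem, this eigenvalue is therefore the Frobenius--Perron eigenvalue of $N_i$.

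An exact $\Bbbk$-linear monoidal equivalence sends simples to simples and preserves these multiplicities, so $F(S_i)$ has the same matrix. Hence $\dim F(S_i)=\dim S_i$. Now $F(P_i)$ is the projective cover of $F(S_i)$, and $\dim\mathrm{End}_B(F(S_i))=\dim\mathrm{End}_A(S_i)$. The multiplicity formula $[P:P_i]=\dim\mathrm{Hom}_A(P,S_i)/\dim\mathrm{End}_A(S_i)$ for finitely generated projectives then gives $F(A)\cong B$.

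Your description of the regular module as $\bigoplus_i P_i^{\dim L_i}$ is only correct over a splitting field; in general the exponent is $\dim L_i/\dim\mathrm{End}_A(L_i)$. With this step supplied, the rest of your outline is fine, including the comparison in Step~3 of the complexes built from finitely generated modules and from all modules.
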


In order to deduce this from Theorem~\ref{thm:Morita}, we first need to prove the following result:

\begin{lem}
    \label{lem:extend-braided}
    Let $\qtb{A}$ and $\qtbv{B}$ be finite-dimensional quasi-triangular bialgebras over a field $\Bbbk$.
    Then, any $\Bbbk$-linear braided monoidal functor $\func[F = \monfct{F}]{(\Amod, \braidtrivar)}{(\Bmod, \braidtrivarv)}$ extends uniquely, up to isomorphism, to a $\Bbbk$-linear braided monoidal functor $\func[\wt{F} = \monfct{\wt{F}}]{(\AMod, \braidtrivar)}{(\BMod, \braidtrivarv)}$ which commutes with filtered colimits.
    If $F$ is a $\Bbbk$-linear braided monoidal equivalence, then so is $\wt{F}$.
\end{lem}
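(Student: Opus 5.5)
We use the standard identification of $\AMod$ with the ind-completion of $\Amod$. Since $A$ is finite-dimensional over a field, every $A$-module is the filtered colimit (union) of its finitely generated submodules, and finitely generated $A$-modules are finite-dimensional, hence compact (finitely presented) in $\AMod$. Consequently, restriction along the inclusion $\Amod \hookrightarrow \AMod$ gives an equivalence between $\Bbbk$-linear functors $\AMod \to \BMod$ preserving filtered colimits and $\Bbbk$-linear functors $\Amod \to \BMod$. The inverse is the left Kan extension
\[
\wt{F}(M) \coloneqq \dcolim[N \subset M]{F(N)},
\]
with the colimit over finitely generated submodules $N$ of $M$. Uniqueness up to isomorphism of the extension follows at once: any filtered-colimit-preserving extension must agree with this formula. $\Bbbk$-linearity of $\wt{F}$ follows because filtered colimits of $\Bbbk$-modules are computed on underlying sets and are compatible with the linear structure.

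The main step is extending the monoidal and braided structure. In $\AMod$ and $\BMod$ the tensor product is $\ot_{\Bbbk}$ on underlying spaces. Over a field it commutes with filtered colimits in each variable separately, so it commutes with filtered colimits in both variables jointly, via the diagonal of the product of two filtered categories, which is cofinal. Moreover the finitely generated submodules $N \ot N'$, with $N \subset M$ and $N' \subset M'$, form a cofinal family among the finitely generated submodules of $M \ot M'$: any finite-dimensional subspace of $M \ot M'$ lies in some $N \ot N'$, and one may take $N,N'$ to be the $A$-submodules generated by finitely many tensor factors. Hence we can define
\[
\wt{F}(M) \ot \wt{F}(M') \iso \colim{F(N) \ot F(N')} \longxrightarrow{\colim{\monfctmul{F}{N}{N'}}} \colim{F(N \ot N')} \iso \wt{F}(M \ot M'),
\]
and set $\monfctuni{\wt{F}} \coloneqq \monfctuni{F}$, using $\wt{F}(\Bbbk) = F(\Bbbk)$. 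The hexagon, unit and braiding axioms (\ref{eq:monfct-1}), (\ref{eq:monfct-2-l}), (\ref{eq:monfct-2-r}) and (\ref{eq:brmonfct}) hold on each $N \ot N' \ot N''$ because they hold for $F$. Since the structure maps and the braidings $\braidtrivar$, $\braidtrivarv$ are natural and compatible with the colimit injections, these axioms pass to the colimit. I expect this bookkeeping to be the most tedious part: checking naturality of $\monfctmulvar{\wt{F}}$, which is well defined because it is induced on colimits, and checking compatibility with the associators, which are just those of $\cat{V}$.

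Finally, suppose $F$ is an equivalence with quasi-inverse $G$, which is again $\Bbbk$-linear braided monoidal. Extend $G$ in the same way to $\wt{G}$. Then $\wt{G} \circ \wt{F}$ and $\id{\AMod}$ both preserve filtered colimits: $\wt{F}$ does by construction, since it is defined by commuting with unions and each $\wt{F}(M)$ is itself such a colimit. Both functors also restrict to functors isomorphic, as monoidal functors, to $\id{\Amod}$. Here we use that $\wt{F}$ sends $\Amod$ into $\Bmod$, because on finitely generated modules $\wt{F}$ agrees with $F$. By the uniqueness part, applied to monoidal natural transformations with the colimit formula, we get $\wt{G}\wt{F} \iso \id{}$ monoidally, and symmetrically $\wt{F}\wt{G} \iso \id{}$. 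Thus $\wt{F}$ is a $\Bbbk$-linear braided monoidal equivalence.
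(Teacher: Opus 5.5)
Your proposal is correct and is essentially the paper's argument. You extend $F$ by colimits over finitely generated submodules, define $\monfctmul{\wt{F}}{M}{M'}$ as $\varinjlim \monfctmul{F}{N}{N'}$ transported along the isomorphisms coming from cofinality of the $N \ot N'$ in $\cat{J}_{M \ot M'}$ (the paper's $\phi$, $\gamma$, $\chi$), and check the braiding condition on colimit injections using naturality of $\braidtrivar$, $\braidtrivarv$ and the braided condition for $F$ on finitely generated submodules. The only differences are in emphasis: the paper cites \cite[Lemma~3.6]{xu-zheng} for the monoidal extension and writes the braiding check out in full, whereas you build the monoidal structure by hand but only sketch that check, and you also spell out the equivalence clause (extend a quasi-inverse and use uniqueness), which the paper leaves implicit.
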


The proof of Lemma~\ref{lem:extend-braided} requires us to write modules as filtered colimits of finitely generated ones, in a way that is compatible with tensor products.
To achieve this, we write them as filtered colimits of all finitely generated submodules.
We also need to keep track of relevant isomorphisms explicitly, as treating them as abstract ones is insufficient to show that the induced functor commutes with the braidings.

\begin{rem}
    \label{rem:loc-coh}
    Let $A$ be a finite-dimensional bialgebra over a field $\Bbbk$.
    Then, the $\Bbbk$-linear category $\AMod$ is \term{locally coherent Grothendieck} in the sense of Herzog \cite[page 511]{herzog}, and $\Amod$ coincides with the full subcategory $\mathrm{fp}(\AMod)$ of \term{finitely presented} objects in $\AMod$, which satisfies the finiteness assumptions of Xu--Zheng \cite[page 69]{xu-zheng}.
    Since the tensor product functor on $\AMod$ commutes with filtered colimits, the aforementioned monoidal structure on $\AMod$ coincides (up to isomorphism) with the monoidal structure inherited from that on $\Amod$ as in \cite[Remark 3.4]{xu-zheng}.
\end{rem}

\begin{nota}
    Let $A$ be a finite-dimensional bialgebra over a field $\Bbbk$, and $M$ an $A$-module.
    \begin{enumerate}
        \item We write $\cat{J}_M$ for the partially ordered set consisting of finitely generated $A$-submodules of $M$. 
    \end{enumerate}
    We regard $\cat{J}_M$ as a (small) category in the usual way (see \cite[page 11]{maclane}).
    Since $\cat{J}_M$ is directed, it is filtered when regarded as a category, 
    and the collection $\family{\iota_{M'} \colon M' \hookrightarrow M}{M' \in {\myspace \myspace} \cat{J}_M}$ of inclusion maps exhibits $M$ as a filtered colimit of the evident functor $\func[J_M]{\cat{J}_M}{\AMod}$.
    \begin{enumerate}[resume]
        \item Let $\cat{D}$ be a category, and $\fct[G]{\AMod}{D}$ a functor.
        We write $\colim[M' \subset M]{G(M')}$ or $\colim[\cat{J}_M]{G(M')}$ for the filtered colimit $\colim[\cat{J}_M]{\hspace{-0.1em} (G \circ J_M)}$ of the composition $\cat{J}_M \xrightarrow{J_M} \AMod \longxrightarrow{G} \cat{D}$, if it exists.
    \end{enumerate}
\end{nota}

\begin{proof}[Proof of \emph{Lemma~\ref{lem:extend-braided}}]
    Applying \cite[Lemma 3.6]{xu-zheng}, we can extend $F$ uniquely, up to isomorphism, to a $\Bbbk$-linear monoidal functor $\func[\wt{F}]{\AMod}{\BMod}$ which commutes with filtered colimits.
    Note that the proof of \cite[Lemma 3.6]{xu-zheng} remains valid when finitely presented objects are not necessarily rigid, and is thus applicable in this case.
    Without loss of generality, we may assume that the induced functor $\func{\Amod}{\Bmod}$ is equal to $F$ in the strict sense.
    We claim that $\wt{F}$ is a braided monoidal functor $\func{(\AMod, \braidtrivar)}{(\BMod, \braidtrivarv)}$.
    For an $A$-module $L$, we let $\family{\func[s_{L'}]{F(L')}{\colim[L' \subset L]{F(L')}}}{L' \in \cat{J}_L}$ denote the collection of $B$-linear map forming the (filtered) colimit $\colim[L' \subset L]{F(L')}$, and we define $\func[\phi_L]{\colim[L' \subset L]{F(L')}}{\wt{F}(L)}$ to be the unique $B$-linear map such that $\phi_L \circ s_{L'} = \wt{F}(\iota_{L'})$ for all $L' \in \cat{J}_L$, which is an isomorphism since $\wt{F}$ commutes with filtered colimits.
    Take $A$-modules $M$ and $N$.
    Observe that the isomorphism $\func[\monfctmul{\wt{F}}{M}{N}]{\wt{F}(M) \ot \wt{F}(N)}{\wt{F}(M \ot N)}$ factors as a composition of isomorphisms of $B$-modules as follows:
    \begin{align*}
        \wt{F}(M) \ot \wt{F}(N) 
        & \xrightarrow{\phi_M^{-1} \ot \phi_N^{-1}} \Bigl(\dcolim[\cat{J}_M]{\hspace{0.1em} F(M')}\Bigr) \ot \Bigl(\dcolim[\cat{J}_N]{\hspace{0.1em} F(N')}\Bigr) 
        \xrightarrow{\gamma_{M, N}^{-1}} \dcolim[\cat{J}_M \times \cat{J}_N]{\hspace{-0.4em} (F(M') \ot F(N'))} \\
        & \xrightarrow{\colim[]{\monfctmul{F}{M'}{N'}}} 
        \dcolim[\cat{J}_M \times \cat{J}_N]{\hspace{-0.4em} F(M' \ot N')} 
        \xrightarrow{\chi_{M, N}} \dcolim[\cat{J}_{M \ot N}]{\hspace{-0.2em} F(X')} 
        \xrightarrow{\phi_{M \ot N}} \wt{F}(M \ot N).
    \end{align*}
    Here, $\gamma_{M, N}$ is defined to be the composition of the dotted isomorphisms below
    \[\begin{tikzcd}
        {F(M') \ot F(N')} &&&& {\colim[M', N']{(F(M') \ot F(N'))}} \\
        {F(M') \ot F(N')} && {\colim[N']{(F(M') \ot F(N'))}} && {\colim[M']{(\colim[N']{(F(M') \ot F(N'))})}} \\
        {F(M') \ot F(N')} && {F(M') \ot (\colim[N']{F(N')})} && {\colim[M']{(F(M') \ot (\colim[N']{F(N')}))}} \\
        {F(M') \ot F(N')} && {F(M') \ot (\colim[N']{F(N')})} && {(\colim[M']{F(M')}) \ot (\colim[N']{F(N')})} \\
        {F(M') \ot F(N')} && {F(M') \ot (\colim[N']{F(N')})} && {\wt{F}(M) \ot (\colim[N']{F(N')})} \\
        {F(M') \ot F(N')} && {F(M') \ot \wt{F}(N)} && {\wt{F}(M) \ot \wt{F}(N),}
        \arrow[from=1-1, to=1-5]
        \arrow["{\id{}}"', from=1-1, to=2-1]
        \arrow[dashed, from=1-5, to=2-5]
        \arrow[from=2-1, to=2-3]
        \arrow["{\id{}}"', from=2-1, to=3-1]
        \arrow[from=2-3, to=2-5]
        \arrow[from=2-3, to=3-3]
        \arrow[dashed, from=2-5, to=3-5]
        \arrow["{F(M') \ot s_{N'}}", from=3-1, to=3-3]
        \arrow["{\id{}}"', from=3-1, to=4-1]
        \arrow[from=3-3, to=3-5]
        \arrow["{\id{}}"', from=3-3, to=4-3]
        \arrow[dashed, from=3-5, to=4-5]
        \arrow["{F(M') \ot s_{N'}}", from=4-1, to=4-3]
        \arrow["{\id{}}"', from=4-1, to=5-1]
        \arrow["{s_{M'} \ot \varinjlim{F(N')}}", from=4-3, to=4-5]
        \arrow["{\id{}}"', from=4-3, to=5-3]
        \arrow["{\phi_M \ot (\varinjlim{F(N')})}", from=4-5, to=5-5]
        \arrow["{F(M') \ot s_{N'}}", from=5-1, to=5-3]
        \arrow["{\id{}}"', from=5-1, to=6-1]
        \arrow["{\wt{F}(\iota_{M'}) \ot (\varinjlim{F(N')})}", from=5-3, to=5-5]
        \arrow["{F(M') \ot \phi_N}"', from=5-3, to=6-3]
        \arrow["{\wt{F}(M) \ot \phi_N}", from=5-5, to=6-5]
        \arrow["{F(M') \ot \wt{F}(\iota_{N'})}"', from=6-1, to=6-3]
        \arrow["{\wt{F}(\iota_{M'}) \ot \wt{F}(N)}"', from=6-3, to=6-5]
    \end{tikzcd}\]
    and $\chi_{M, N}$ is the canonical isomorphism induced from the functor $\func{\cat{J}_M \times \cat{J}_N}{\cat{J}_{M \ot N}}[(M', N')][M' \ot N']$, whose inverse is constructed using the property that for all $X' \in \cat{J}_{M \ot N}$, we can choose $M' \in \cat{J}_M$ and $N' \in \cat{J}_N$ so that $X' \subset M' \ot N'$.
    Given a functor $\func[H]{\cat{J}_M \times \cat{J}_N}{\BMod}$, we will identify $\colim[\cat{J}_M \times \cat{J}_N]{H}$ with the filtered colimit of the composition $\cat{J}_N \times \cat{J}_M \to \cat{J}_M \times \cat{J}_N \xrightarrow{H} \BMod$.
    We now wish to prove the equality $\monfctmul{\wt{F}}{N}{M} \circ \braidtriv{\wt{F}(M)}{\wt{F}(N)} = \wt{F}(\braidtri{M}{N}) \circ \monfctmul{\wt{F}}{M}{N}$ (see also (\ref{eq:brmonfct})), which is equivalent to the equality
    \begin{align*}
        & \phi_{N \ot M} \circ \chi_{N, M} \circ \varinjlim{\monfctmul{F}{N'}{M'}} \circ \gamma_{N, M}^{-1} \circ (\phi_N^{-1} \ot \phi_M^{-1}) \circ \braidtriv{\wt{F}(M)}{\wt{F}(N)} \circ (\phi_M \ot \phi_N) \circ \gamma_{M, N} \\
        & \qquad \qquad \qquad \qquad \qquad \qquad \qquad \qquad \qquad = \wt{F}(\braidtri{M}{N}) \circ \phi_{M \ot N} \circ \chi_{M, N} \circ \varinjlim{\monfctmul{F}{M'}{N'}}.
    \end{align*}
    We write $t_{M', N'}$ for the evident $B$-linear map $\func{F(M') \ot F(N')}{\colim[\cat{J}_M \times \cat{J}_N]{(F(M') \ot F(N'))}}$.
    Since
    \begin{align*}
        \wt{F}(\braidtri{M}{N}) \circ \phi_{M \ot N} \circ \chi_{M, N} \circ t_{M', N'} 
        &= \wt{F}(\braidtri{M}{N}) \circ \phi_{M \ot N} \circ s_{M' \ot N'} \\
        &= \wt{F}(\braidtri{M}{N}) \circ \wt{F}(\iota_{M' \ot N'}) 
        = \wt{F}(\iota_{N' \ot M'}) \circ F(\braidtri{M'}{N'}) \\
        &= \phi_{N \ot M} \circ s_{N' \ot M'} \circ F(\braidtri{M'}{N'}) \\
        &= \phi_{N \ot M} \circ \chi_{N, M} \circ t_{N', M'} \circ F(\braidtri{M'}{N'}) \\
        &= \phi_{N \ot M} \circ \chi_{N, M} \circ \varinjlim{F(\braidtri{M'}{N'})} \circ t_{M', N'},
    \end{align*}
    where the third equality follows from (\ref{eq:brmoncat-1}) for $\braidtrivar$, $\iota_{N' \ot M'} = \iota_{N'} \ot \iota_{M'}$, and $\iota_{M' \ot N'} = \iota_{M'} \ot \iota_{N'}$, it follows that $\wt{F}(\braidtri{M}{N}) \circ \phi_{M \ot N} \circ \chi_{M, N} = \phi_{N \ot M} \circ \chi_{N, M} \circ \varinjlim{F(\braidtri{M'}{N'})}$, and it suffices to prove the equality
    \begin{equation}
        \label{eq:braided-lem}
        \begin{split}
            & \varinjlim{\monfctmul{F}{N'}{M'}} \circ \gamma_{N, M}^{-1} \circ (\phi_N^{-1} \ot \phi_M^{-1}) \circ \braidtriv{\wt{F}(M)}{\wt{F}(N)} \circ (\phi_M \ot \phi_N) \circ \gamma_{M, N} \\
            & \qquad \qquad \qquad \qquad \qquad \qquad \qquad \qquad \qquad \qquad = \varinjlim{F(\braidtri{M'}{N'})} \circ \varinjlim{\monfctmul{F}{M'}{N'}}.
        \end{split}
    \end{equation}
    On the other hand, we have
    \begin{align*}
        & \gamma_{N, M}^{-1} \circ (\phi_N^{-1} \ot \phi_M^{-1}) \circ \braidtriv{\wt{F}(M)}{\wt{F}(N)} \circ (\phi_M \ot \phi_N) \circ \gamma_{M, N} \circ t_{M', N'} \\
        & \quad = \gamma_{N, M}^{-1} \circ (\phi_N^{-1} \ot \phi_M^{-1}) \circ \braidtriv{\wt{F}(M)}{\wt{F}(N)} \circ (\wt{F}(\iota_{M'}) \ot \wt{F}(\iota_{N'})) \\
        & \quad = \gamma_{N, M}^{-1} \circ (\phi_N^{-1} \ot \phi_M^{-1}) \circ (\wt{F}(\iota_{N'}) \ot \wt{F}(\iota_{M'})) \circ \braidtriv{F(M')}{F(N')} \\
        & \quad = \gamma_{N, M}^{-1} \circ (s_{N'} \ot s_{M'}) \circ \braidtriv{F(M')}{F(N')} 
        = t_{N', M'} \circ \braidtriv{F(M')}{F(N')} 
        = \varinjlim{\braidtriv{F(M')}{F(N')}} \circ t_{M', N'},
    \end{align*}
    where the first and the fourth equalities follow from the commutativity of the above diagram, and the second equality from (\ref{eq:brmoncat-1}) for $\braidtrivarv$.
    Thus, the left hand side of (\ref{eq:braided-lem}) is $\varinjlim{\monfctmul{F}{N'}{M'}} \circ \varinjlim{\braidtriv{F(M')}{F(N')}}$.
    We are reduced to prove the equality $\monfctmul{F}{N'}{M'} \circ \braidtriv{F(M')}{F(N')} = F(\braidtri{M'}{N'}) \circ \monfctmul{F}{M'}{N'}$ for all $M' \in \cat{J}_M$ and $N' \in \cat{J}_N$, which follows since $F$ is a braided monoidal functor by virtue of our assumption.
\end{proof}

\begin{proof}[Proof of \emph{Corollary~\ref{cor:Morita}}]
    By Theorem~\ref{thm:Morita} and Lemma~\ref{lem:extend-braided}, and also the proof of \cite[Proposition 4.2.2]{etingof-gelaki-A}, it suffices to prove that $F^{-1}(B) \iso A$ as $A$-modules.
    Let $\{[S_1], \dots, [S_m]\}$ be the set of isomorphism classes of simple $A$-modules, and we write $P_i$ for the projective cover of $S_i$. 
    We note that $\{[P_1], \dots, [P_m]\}$ forms the set of isomorphism classes of finitely generated indecomposable projective $A$-modules, and given a finitely generated projective $A$-module $P$, the multiplicity of $P_i$ in $P$ is $\dim \hom{A}{P}{S_i} / \dim \End{A}{S_i}$; 
    see \cite[Theorem 4.3.1 (2)]{etingof-gelaki-A} for the special case when $P = A$ and $\Bbbk$ is algebraically closed.
    Arguing as in the proof of \cite[Proposition 4.2.2]{etingof-gelaki-A}, we are reduced to prove $\dim F(S_i) = \dim S_i$ for all $1 \le i \le m$.
    This follows from the proof of Etingof--Gelaki \cite[Theorem 6.1]{etingof-gelaki-B}, using the Frobenius-Perron theorem (cf.~\cite[Theorem 3.2.1 (3)]{etingof-gelaki-nikshych-ostrik}) and the fact that for all $1 \le i, j, j' \le m$, the multiplicity of $F(S_j)$ in the composition series of $F(S_i) \ot F(S_{j'}) \iso F(S_i \ot S_{j'})$ is equal to that of $S_j$ in the composition series of $S_i \ot S_{j'}$.
    The last assertion follows since $F$ is a $\Bbbk$-linear monoidal equivalence.
\end{proof}

\begin{rem}
    Replacing $\dim$ by the length as a $\Bbbk$-module, Corollary~\ref{cor:Morita} and the preceding remark hold in a more general setting where $\Bbbk$ is a nonzero commutative artinian ring and $A$ and $B$ are finitely generated as $\Bbbk$-modules; that is, when 
    $A$ and $B$ are \textit{artinian $\Bbbk$-algebras}.
\end{rem}

Let $\Bbbk$ be a field of characteristic different from two.
We now determine the $\Bbbk$-vector space $\KBA[1]$ up to isomorphism when $A$ is the 
Sweedler Hopf algebra, which is an example of a non-cocommutative $\Bbbk$-Hopf algebra which is not isomorphic to any group algebras.
When $\Bbbk$ is an algebraically closed field of characteristic zero, 
$A$ is precisely the $\Bbbk$-Hopf algebra of the smallest dimension having this property.

\begin{ex}
    Let $\Bbbk$ be a field of characteristic different from two.
    Let $\bialg{A}$ be the $4$-dimensional Sweedler Hopf algebra, where 
    $A$ is generated by $g$ and $x$ subject to the relations $g^2 = 1$, $x^2 = 0$, and $xg = - gx$, and $\Delta$, $\eps$, and the antipode $S$ are determined by $\Delta(g) \coloneqq g \ot g$, $\Delta(x) \coloneqq x \ot g + 1 \ot x$, $\eps(g) \coloneqq 1$, $\eps(x) \coloneqq 0$, $S(g) \coloneqq g$, and $S(x) \coloneqq gx$.
    It follows from Radford \cite[pages 296 and 297]{radford} that $\qtb{A}$ is a (quasi-)triangular $\Bbbk$-algebra if and only if $R$ is written in the following form for an arbitrary $\lambda \in \Bbbk$:
    \[R = R_\lambda \coloneqq \frac{1}{2} (1 \ot 1 + 1 \ot g + g \ot 1 - g \ot g) + \frac{\lambda}{2} (x \ot x + x \ot gx - gx \ot x + gx \ot gx).\]
    Recall from Andruskiewitsch--Etingof--Gelaki \cite[Example 3.4.2]{andruskiewitsch-etingof-gelaki} that $J \coloneqq 1 \ot 1 - \dfrac{\lambda}{2} (x \ot gx)$ is a \term{bialgebra twist of $A$} in the sense of \cite[Definition 5.14.1]{etingof-gelaki-nikshych-ostrik}, and thus $J$ gives rise to a $\Bbbk$-linear braided monoidal equivalence $\func[(\id{}, \monfctmulvar{\id{}}, \id{})]{(\AMod, \braidtrivarA)}{(\AMod, \braidtrivarB)}$, where, for all $A$-modules $M$ and $N$, the isomorphism $\func[\monfctmul{\id{}}{M}{N}]{M \ot N}{M \ot N}$ of $A$-modules is defined by $\monfctmul{\id{}}{M}{N}(m \ot n) \coloneqq J^{-1}(m \ot n)$ for all $m \in M$ and $n \in N$.
    Let $M$ be an $A$-module.
    By Theorem~\ref{thm:Morita}, we know that $\KB[1]{A, R_0} \iso \KB[1]{A, R_\lambda}$ as $\Bbbk$-vector spaces.
    We will construct such an isomorphism explicitly in two different ways.
    We identify $\K[2]{A, R_\lambda} = \hom{A}{\wT[2](A, R_\lambda)}{M}$ with $\hom{A}{A \ot A}{M}$ as in Remark~\ref{rem:wTC}.
    We note that the braid group $\brgrp[2]$ (which is just the infinite cyclic group) acts on $\hom{A}{A \ot A}{M}$ as
    \begin{align*}
        (\brperm{1}{1} \varphi) (a_0 \ot a_1) 
        = \sgn[\brperm{1}{1}] (\varphi \circ \perm{1}{1}) (a_0 \ot a_1) 
        = - \varphi((\swvarL{2} a_1) \ot (\swvarL{1} a_0))
    \end{align*}
    for all $a_0, a_1 \in A$ and all $A$-linear maps $\func[\varphi]{A \ot A}{M}$ (see Remark~\ref{rem:sign} and (\ref{eq:perm-qtb})).
    We will further make use of an isomorphism $\func{\hom{A}{A \ot A}{M}}{\hom{\Bbbk}{A}{M}}[\varphi][\varphi(1 \ot (-))]$, whose inverse is given by $\phi \mapsto [a_0 \ot a_1 \mapsto \sw{a_0}{1} \phi(S(\sw{a_0}{2}) a_1)]$, (see \cite[page 186]{witherspoon}, for example).
    \begin{enumerate}
        \item Take an $A$-linear map $\func[\varphi]{A \ot A}{M}$, and put $\phi \coloneqq \varphi(1 \ot (-))$.
        Since
        \begin{align*}
            (\brperm{1}{1} \varphi)(1 \ot a) 
            &= - \varphi((\swvarL{2} a) \ot \swvarL{1}) 
            = - \sw{(\swvarL{2} a)}{1} \phi(S(\sw{(\swvarL{2} a)}{2}) \swvarL{1}) \\
            &= - \sw{\swvarL{2}}{1} \sw{a}{1} \phi(S(\sw{a}{2}) S(\sw{\swvarL{2}}{2}) \swvarL{1})
        \end{align*}
        for all $a \in A$, $\varphi$ belongs to $\KB[1]{A, R_\lambda}$ if and only if $\brperm{1}{1} \varphi = \varphi$ holds if and only if the equality $\sw{\swvarL{2}}{1} \sw{a}{1} \phi(S(\sw{a}{2}) S(\sw{\swvarL{2}}{2}) \swvarL{1}) = - \phi(a)$ holds for all $a \in A$; the left hand side is written as
        \begin{align*}
            & \frac{1}{2} ((1 - g) \sw{a}{1} \phi(S(\sw{a}{2})) + (1 + g) \sw{a}{1} \phi(S(\sw{a}{2}) g)) \\
            & \qquad \qquad \qquad + \frac{\lambda}{2} ((- x + gx) \sw{a}{1} \phi(S(\sw{a}{2}) x) + (x + gx) \sw{a}{1} \phi(S(\sw{a}{2}) gx)).
        \end{align*}
        Substituting $a = 1$, $g$, $x$, and $gx$ into the above equality gives
        \begin{align*}
            (1 - g) \phi(1) + (1 + g) \phi(g) + \lambda ((- x + gx) \phi(x) + (x + gx) \phi(gx)) &= - 2 {\myspace} \phi(1); \\
            (1 + g) \phi(1) + (- 1 + g) \phi(g) 
            + \lambda ((- x - gx) \phi(x) + (- x + gx) \phi(gx)) &= - 2 {\myspace} \phi(g); \\
            (x + gx) \phi(1) + (x - gx) \phi(g) + (1 + g) \phi(x) + (- 1 + g) \phi(gx) &= - 2 {\myspace} \phi(x); \\
            (- x + gx) \phi(1) + (x + gx) \phi(g) + (- 1 + g) \phi(x) + (- 1 - g) \phi(gx) &= - 2 {\myspace} \phi(gx),
        \end{align*}
        which simultaneously hold if and only if the following equalities are satisfied:
        \[4 {\myspace} \phi(1) = - (2 {\myspace} (1 + g) \phi(g) - \lambda (3 + g) x \phi(x)); 
        \quad 4 {\myspace} \phi(gx) = - ((3 - g) x \phi(g) + 2 {\myspace} (1 - g) \phi(x)).\]
        Since $(1, g, x, gx)$ forms a $\Bbbk$-basis of $A$, it follows that the assignment $\phi \mapsto (\phi(1), \phi(g), \phi(x), \phi(gx))$ induces an isomorphism $\{\phi \in \hom{\Bbbk}{A}{M} \mid \brperm{1}{1} \varphi = \varphi\} \iso M_\lambda$, where $M_\lambda$ is defined to be the $\Bbbk$-subspace of $M^{\oplus 4}$ which consists of elements of the form
        \[\Bigl(- \frac{1}{4} (2 {\myspace} (1 + g) m_2 - \lambda (3 + g) x m_3), m_2, m_3, - \frac{1}{4} ((3 - g) x m_2 + 2 {\myspace} (1 - g) m_3)\Bigr)\]
        for all $m_2, m_3 \in M$.
        Since $M_\lambda \iso M^{\oplus 2}$ holds for an arbitrary $\lambda \in \Bbbk$, it follows that
        \[\KB[1]{A, R_0} \iso M_0 \iso M^{\oplus 2} \iso M_\lambda \iso \KB[1]{A, R_\lambda}.\]
        \item Our second approach uses the proofs of Corollary~\ref{cor:mor-6} and Theorem~\ref{thm:Morita}, as they provide an explicit recipe to construct a relative morphism $\func{\wTcal(A, R_0) \circ \incl}{\wTcal(A, R_\lambda) \circ \incl}$ from the $\Bbbk$-linear monoidal equivalence $\func[(\id{}, \monfctmulvar{\id{}}, \id{})]{(\AMod, \braidtrivarA)}{(\AMod, \braidtrivarB)}$ mentioned above.
        Using the notation of Theorem~\ref{thm:Morita}, since $\ff[A] \circ \id{} = \ff[A]$ as $\Bbbk$-linear functors, we may assume $\xi' = \id{\ff[A]}$, and consequently, $\beta$, $\xi$, and $\alpha$ become the identities.
        We now use Corollary~\ref{cor:mor-6} (and Remark~\ref{rem:rel-isom}) to obtain from $\alpha$ a relative morphism $\func[(\id{}, \vareta)]{\wTcal(A, R_0) \circ \incl}{\wTcal(A, R_\lambda) \circ \incl}$ such that $\vareta[n]$ is a natural isomorphism for all $n \ge 0$.
        Following the argument after Definition~\ref{defn:lin-mon-eqv} applied to this case, precomposing with $(\ev{\Bbbk} \star \eta)_2 = (\vareta[2])_{\Bbbk}$ gives rise to an isomorphism
        \[\K[2]{A, R_0} = \hom{A}{\wT[2](A, R_0)}{M} 
        \to \hom{A}{\wT[2](A, R_\lambda)}{M} = \K[2]{A, R_\lambda}\]
        of $\Bbbk$-vector spaces.
        Arguing as in the proof of Lemma~\ref{lem:KB}, one can verify that this is compatible with the actions of the braid group $\brgrp[2]$.
        Now, we determine the isomorphism $(\vareta[2])_{\Bbbk}$ from $\wT[2](A, R_\lambda)$ to $\wT[2](A, R_0)$, both of which are equal to $A \ot (A \ot \Bbbk)$.
        Using the notation of Theorem~\ref{thm:mor-4}, the natural transformation $\func[\theta]{A \ot (-)}{A \ot (-)}$ coincides with $\monfctmul{\id{}}{A}{-}$.
        In view of the equalities $\wTcal[1](A, R_0)(\Bbbk) 
        = A \ot \Bbbk$ and $\vareta[2] = (\theta \star \wTcal[1](A, R_0)) \circ (\wTcal[1](A, R_\lambda) \star \theta)$ (the latter follows from the proof of Proposition~\ref{prop:mor-3}), we have $(\vareta[2])_{\Bbbk} = \theta_{A \ot \Bbbk} \circ (A \ot \theta_{\Bbbk}) 
        = \monfctmul{\id{}}{A}{A \ot \Bbbk} \circ (A \ot \monfctmul{\id{}}{A}{\Bbbk})$.
        The composite
        \[A \ot A \xrightarrow{(A \ot \ruc{A})^{-1}} A \ot (A \ot \Bbbk) 
        \xrightarrow{(\vareta[2])_{\Bbbk}} A \ot (A \ot \Bbbk) 
        \xrightarrow{A \ot \ruc{A}} A \ot A\]
        is therefore equal to $\monfctmul{\id{}}{A}{A}$, since
        \begin{align*}
            (A \ot \ruc{A}) \circ (\vareta[2])_{\Bbbk} \circ (A \ot \ruc{A})^{-1} 
            &= (A \ot \ruc{A}) \circ \monfctmul{\id{}}{A}{A \ot \Bbbk} \circ (A \ot \monfctmul{\id{}}{A}{\Bbbk}) \circ (A \ot \ruc{A})^{-1} \\
            &= \ruc{A \ot A} \circ \assc{A}{A}{\Bbbk}^{-1} \circ \monfctmul{\id{}}{A}{A \ot \Bbbk} \circ (A \ot \monfctmul{\id{}}{A}{\Bbbk}) \circ \assc{A}{A}{\Bbbk} \circ \ruc{A \ot A}^{-1} \\
            &= \ruc{A \ot A} \circ \monfctmul{\id{}}{A \ot A}{\Bbbk} \circ (\monfctmul{\id{}}{A}{A} \ot \Bbbk) \circ \ruc{A \ot A}^{-1} \\
            &= \ruc{A \ot A} \circ (\monfctmul{\id{}}{A}{A} \ot \Bbbk) \circ \ruc{A \ot A}^{-1} 
            = \monfctmul{\id{}}{A}{A},
        \end{align*}
        where the second equality follows from \cite[Lemma XI.2.2]{kassel}, the third and fourth equality from (\ref{eq:monfct-1}) and (\ref{eq:monfct-2-r}), and the fifth equality from the naturality of $\rucvar$.
        Consequently, the composition
        \[\hom{A}{A \ot A}{M} \to \K[2]{A, R_0} \to \K[2]{A, R_\lambda} \to \hom{A}{A \ot A}{M}\]
        is given by $\varphi \mapsto \varphi \circ \monfctmul{\id{}}{A}{A}$.
        We now precompose this with the composition of the isomorphisms
        \[M^{\oplus 4} \to \hom{\Bbbk}{A}{M} \to \hom{A}{A \ot A}{M}\]
        mentioned above, and also postcompose with its inverse $\hom{A}{A \ot A}{M} 
        \to M^{\oplus 4}$, in order to obtain a $\brgrp[2]$-equivariant isomorphism $\func{M^{\oplus 4}}{M^{\oplus 4}}$, where $\brgrp[2]$ acts differently on its domain and codomain. 
        Take an $A$-linear map $\func[\varphi]{A \ot A}{M}$, and put $\phi \coloneqq \varphi(1 \ot (-))$.
        Since we have
        \[\monfctmul{\id{}}{A}{A}(1 \ot a) 
        = J^{-1} (1 \ot a) 
        = \Bigl(1 \ot 1 + \frac{\lambda}{2} (x \ot gx)\Bigr) (1 \ot a)\]
        for all $a \in A$, it follows that
        \[\monfctmul{\id{}}{A}{A}(1 \ot a) 
        = \begin{cases*}
            1 \ot 1 + \dfrac{\lambda}{2} (x \ot gx) 
            \\
            1 \ot g - \dfrac{\lambda}{2} (x \ot x) 
        \end{cases*}
        = \begin{cases*}
            1 \ot 1 + \dfrac{\lambda}{2} x (1 \ot x) & if $a = 1$, \\
            1 \ot g - \dfrac{\lambda}{2} x (1 \ot gx) & if $a = g$,
        \end{cases*}\]
        where the second equality follows since we have
        \[a_0 \ot a_1 
        = \sw{a_0}{1} \ot \eps(\sw{a_0}{2}) a_1 
        = \sw{a_0}{1} \ot \sw{a_0}{2} S(\sw{a_0}{3}) a_1 
        = \sw{a_0}{1} (1 \ot S(\sw{a_0}{2}) (a_1))\]
        for all $a_0, a_1 \in A$; for example, one can compute
        \[x \ot gx = x (1 \ot S(g) gx) + g (1 \ot S(x) gx) 
        = x (1 \ot g^2 x) - g (1 \ot xgx) = x (1 \ot x).\]
        Since we have $\monfctmul{\id{}}{A}{A}(1 \ot x) = 1 \ot x$ and $\monfctmul{\id{}}{A}{A}(1 \ot gx) = 1 \ot gx$, it follows that
        \[(\varphi \circ \monfctmul{\id{}}{A}{A}) (1 \ot a) 
        = \begin{cases*}
            \varphi(1 \ot 1) + \dfrac{\lambda}{2} x \varphi(1 \ot x) 
            \\
            \varphi(1 \ot g) - \dfrac{\lambda}{2} x \varphi(1 \ot gx) 
            \\
            \varphi(1 \ot x) 
            \\
            \varphi(1 \ot gx) 
        \end{cases*}
        = \begin{cases*}
            \phi(1) + \dfrac{\lambda}{2} x \phi(x) & if $a = 1$, \\
            \phi(g) - \dfrac{\lambda}{2} x \phi(gx) & if $a = g$, \\
            \phi(x) & if $a = x$, \\
            \phi(gx) & if $a = gx$.
        \end{cases*}\]
        Consequently, the $\brgrp[2]$-equivariant isomorphism $\func{M^{\oplus 4}}{M^{\oplus 4}}$ is given by
        \[(m_1, m_2, m_3, m_4) \mapsto \Bigl(m_1 + \frac{\lambda}{2} x m_3, m_2 - \frac{\lambda}{2} x m_4, m_3, m_4\Bigr).\]
        One can verify that this also restricts to an isomorphism $\func{M_0}{M_\lambda}$ of $\Bbbk$-vector spaces.
    \end{enumerate}
\end{ex}

\section*{Acknowledgments}
The authors are grateful to Shu Minaki for his helpful comments and discussions.
The second author was supported by JSPS Grant-in-Aid for Scientific Research (C) 24K06653.




\begin{thebibliography}{99}
    \paper{andruskiewitsch-etingof-gelaki}{N. Andruskiewitsch, P. Etingof, and S. Gelaki}{Triangular Hopf algebras with the Chevalley property}{Michigan Math.\ J.}{49}{2001}{2}{277}{298}
    \bibitem{angelini-knoll-chan-gerhardt-merling-peroux}G. Angelini-Knoll, D. Chan, T. Gerhardt, M. Merling, and M. P\'eroux, Topological symmetric and braid homologies, preprint (2026). \texttt{arXiv:2605.22946v1 [math.AT]}.
    \papervv{angelini-knoll-merling-peroux}{G. Angelini-Knoll, M. Merling, and M. P\'eroux}{Topological $\Delta \mathbf{G}$-homology of rings with twisted $G$-action}{Adv.\ Math.}{493}{2026}{Paper No.~110907}
    \papervar{ardizzoni-menini}{A. Ardizzoni and C. Menini}{Adjunctions and braided objects}{J. Algebra Appl.}{13}{2014}{6}{Paper No.~1450019, 47}
    \paper{artin}{E. Artin}{Theorie der Z\"opfe}{Abh.\ Math.\ Sem.\ Univ.\ Hamburg}{4}{1925}{1}{47}{72}
    \paper{baez}{J. C. Baez}{Hochschild homology in a braided tensor category}{Trans.\ Amer.\ Math.\ Soc.}{344}{1994}{2}{885}{906}
    \papervar{banerjee}{O. Banerjee}{Filtration of cohomology via symmetric semisimplicial spaces}{Math.\ Z.}{308}{2024}{1}{Paper No.~16, 41}
    \paperv{bardakov-gongopadhyay-singh-vesnin-wu}{V. G. Bardakov, K. Gongopadhyay, M. Singh, A. Vesnin, and J. Wu}{Some problems on knots, braids, and automorphism groups}{Sib.\ \`Elektron.\ Mat.\ Izv.}{12}{2015}{394}{405}
    \paper{bardakov-neshchadim-singh}{V. G. Bardakov, M. V. Neshchadim, and M. Singh}{Exterior and symmetric (co)homology of groups}{Internat.\ J. Algebra Comput.}{30}{2020}{8}{1577}{1607}
    \paperv{bichon}{J. Bichon}{On the monoidal invariance of the cohomological dimension of Hopf algebras}{C. R. Math.\ Acad.\ Sci.\ Paris}{360}{2022}{561}{582}
    \book{bohm}{G. B\"ohm}{Hopf algebras and their generalizations from a category theoretical point of view}{Springer}{Cham}{2018} 
    \book{bulacu-caenepeel-panaite-van-oystaeyen}{D. Bulacu, S. Caenepeel, F. Panaite, and F. Van Oystaeyen}{Quasi-Hopf algebras}{Cambridge Univ.\ Press}{Cambridge}{2019} 
    \paper{connes}{A. Connes}{Cohomologie cyclique et foncteurs $\mathrm{Ext}^n$}{C. R. Acad.\ Sci.\ Paris S\'er.\ I Math.}{296}{1983}{23}{953}{958}
    \paper{etingof-gelaki-A}{P. Etingof and S. Gelaki}{On cotriangular Hopf algebras}{Amer.\ J. Math.}{123}{2001}{4}{699}{713}
    \paperv{etingof-gelaki-B}{P. Etingof and S. Gelaki}{On families of triangular Hopf algebras}{Int.\ Math.\ Res.\ Not.}{14}{2002}{757}{768}
    \book{etingof-gelaki-nikshych-ostrik}{P. Etingof, S. Gelaki, D. Nikshych, and V. Ostrik}{Tensor categories}{Amer.\ Math.\ Soc.}{Providence, RI}{2015} 
    \paper{farinati}{M. A. Farinati}{On the derived invariance of cohomology theories for coalgebras}{Algebr.\ Represent.\ Theory}{6}{2003}{3}{303}{331}
    \paper{fiedorowicz-loday}{Z. Fiedorowicz and J.-L. Loday}{Crossed simplicial groups and their associated homology}{Trans.\ Amer.\ Math.\ Soc.}{326}{1991}{1}{57}{87}
    \paper{herzog}{I. Herzog}{The Ziegler spectrum of a locally coherent Grothendieck category}{Proc.\ London Math.\ Soc.\ (3)}{74}{1997}{3}{503}{558}
    \bibitem{johnson-yau}N. Johnson and D. Y. Yau, 2-dimensional categories, preprint (2020). \texttt{arXiv:2002.06055v3 [math.CT]}.
    \paper{kaoutit-kowalzig}{L. El Kaoutit and N. Kowalzig}{Morita base change in Hopf-cyclic (co)homology}{Lett.\ Math.\ Phys.}{103}{2013}{6}{665}{699}
    \book{kassel}{C. Kassel}{Quantum groups}{Springer-Verlag}{New York}{1995} 
    \paper{krasauskas}{R. Krasauskas}{Skew-simplicial groups}{Litovsk.\ Mat.\ Sb.}{27}{1987}{1}{89}{99}
    \bibitem{lurie}J. Lurie, Kerodon (2018). Available at \url{https://kerodon.net}.
    \bookv{maclane}{S. {\maclane}}{Categories for the working mathematician}{second}{Springer-Verlag}{New York}{1998} 
    \papervar{muller-walton}{M. M\"uller and C. Walton}{Morita invariants of quasitriangular coideal subalgebras}{J. Pure Appl.\ Algebra}{230}{2026}{3}{Paper No.~108210, 17}
    \paper{ng-schauenburg}{S.-H. Ng and P. Schauenburg}{Central invariants and higher indicators for semisimple quasi-Hopf algebras}{Trans.\ Amer.\ Math.\ Soc.}{360}{2008}{4}{1839}{1860}
    \paper{nikolaus-scholze}{T. Nikolaus and P. Scholze}{On topological cyclic homology}{Acta Math.}{221}{2018}{2}{203}{409}
    \paper{pevtsova-witherspoon}{J. Pevtsova and S. Witherspoon}{Varieties for modules of quantum elementary abelian groups}{Algebr.\ Represent.\ Theory}{12}{2009}{6}{567}{595}
    \paperv{pirashvili-A}{M. Pirashvili}{Symmetric cohomology of groups}{J. Algebra}{509}{2018}{397}{418}
    \paper{pirashvili-B}{M. Pirashvili}{Crossed modules and symmetric cohomology of groups}{Homology Homotopy Appl.}{22}{2020}{2}{123}{134}
    \paperv{pirashvili-pirashvili}{M. Pirashvili and T. Pirashvili}{Symmetric cohomology of groups and Poincar\'e duality}{J. Algebra}{614}{2023}{177}{198}
    \paper{radford}{D. E. Radford}{Minimal quasitriangular Hopf algebras}{J. Algebra}{157}{1993}{2}{285}{315}
    \book{riehl}{E. Riehl}{Categorical homotopy theory}{Cambridge Univ.\ Press}{Cambridge}{2014}
    Available at the author's website \url{https://emilyriehl.github.io/files/cathtpy.pdf}. 
    \paper{schlichtkrull-solberg}{C. Schlichtkrull and M. Solberg}{Braided injections and double loop spaces}{Trans.\ Amer.\ Math.\ Soc.}{368}{2016}{10}{7305}{7338}
    \papervar{shiba-sanada-itaba}{Y. Shiba, K. Sanada, and A. Itaba}{Symmetric cohomology and symmetric Hochschild cohomology of cocommutative Hopf algebras}{J. Algebra Appl.}{23}{2024}{13}{Paper No.~2450223, 25}
    \paper{shimizu}{K. Shimizu}{Monoidal Morita invariants for finite group algebras}{J. Algebra}{323}{2010}{2}{397}{418}
    \paper{staic-A}{M. D. Staic}{From $3$-algebras to $\Delta$-groups and symmetric cohomology}{J. Algebra}{322}{2009}{4}{1360}{1378}
    \paper{staic-B}{M. D. Staic}{Symmetric cohomology of groups in low dimension}{Arch.\ Math.\ (Basel)}{93}{2009}{3}{205}{211}
    \bibitem{takeuchi}M. Takeuchi, Survey of braided Hopf algebras, in: N. Andruskiewitsch, W. Ferrer Santos, and H.-J. Schneider (eds.), \textit{New trends in Hopf algebra theory}, Contemp.\ Math.\ \textbf{267} (2000), 301--323.
    \paper{todea}{C.-C. Todea}{Symmetric cohomology of groups as a Mackey functor}{Bull.\ Belg.\ Math.\ Soc.\ Simon Stevin}{22}{2015}{1}{49}{58}
    \book{witherspoon}{S. J. Witherspoon}{Hochschild cohomology for algebras}{Amer.\ Math.\ Soc.}{Providence, RI}{2019} 
    \papervar{xu-zheng}{Y. Xu and J. Zheng}{Hopf algebras are determined by their monoidal derived categories}{Math. Z.}{312}{2026}{3}{Paper No.~69, 23}
    \bibitem{zarelua}A. V. Zarelua, \textit{Exterior homology and cohomology of finite groups} (Russian), Tr.\ Mat.\ Inst.\ Steklova \textbf{225} (1999), 202--231.
    \papervar{zhu}{R. Zhu}{Artin-Schelter Gorenstein property of Hopf Galois extensions}{J. Pure Appl.\ Algebra}{229}{2025}{12}{Paper No.~108123, 26}
\end{thebibliography}
\end{document}